\numberwithin{equation}{section}
\theoremstyle{plain}
\newtheorem{theorem}{Theorem}[section]
\newtheorem{lemma}[theorem]{Lemma}
\newtheorem{corollary}[theorem]{Corollary}
\newtheorem{proposition}[theorem]{Proposition}
\newtheorem{theoremcite}{Theorem}
\theoremstyle{definition}
\newtheorem{remark}[theorem]{Remark}
\begin{document}

\title[Shift-type invariant subspaces]{On existence of shift-type invariant subspaces for polynomially bounded operators}
\author[M.F. Gamal']{Maria F. Gamal'}

\address{M.F. GAMAL', 
 St. Petersburg Branch\\ V. A. Steklov Institute 
of Mathematics\\
 Russian Academy of Sciences\\ Fontanka 27, St. Petersburg\\ 
191023, Russia  
}
\email{gamal@pdmi.ras.ru}

\subjclass[2010]{Primary 47A15, 47A60}

\keywords{Shift-type invariant subspace,  polynomially bounded operator, similarity, unilateral shift}


\begin{abstract}
A particular case of results from \cite{ker07} is as follows.  
Let the unitary asymptote of a contraction $T$ contain the bilateral shift (of 
finite or infinite multiplicity). Then there exists an invariant subspace $\mathcal M$ of $T$ 
such that $T|_{\mathcal M}$ is similar to the unilateral shift of the same multiplicity. 
The proof is based on the  Sz.-Nagy--Foias functional model for contractions. 
In the present paper this result is generalized to polynomially bounded operators, 
but in the simplest case. Namely, it is proved that if the unitary asymptote of a  polynomially bounded operator $T$ contains  the bilateral shift 
of  multiplicity $1$, then there exists an invariant subspace $\mathcal M$ of $T$ 
such that $T|_{\mathcal M}$ is similar to the unilateral shift of  multiplicity $1$. 
 The proof is based on a result from \cite{bour}.
  \end{abstract}

\maketitle

\section{Introduction}

Let $\mathcal H$ be a (complex, separable) Hilbert space, 
and let  $\mathcal L(\mathcal H)$ be the algebra of all (bounded, linear)  operators acting on  $\mathcal H$. A (closed) subspace  $\mathcal M$ of  $\mathcal H$ is called \emph{invariant} 
for an operator $T\in\mathcal L(\mathcal H)$, if $T\mathcal M\subset\mathcal M$.
 The complete lattice of all invariant subspaces of $T$ is denoted by  $\operatorname{Lat}T$. For a (closed) subspace $\mathcal M$ of a Hilbert space $\mathcal H$, by $P_{\mathcal M}$ and $I_{\mathcal M}$ the orthogonal projection from $\mathcal H$ onto $\mathcal M$ and  the identity operator on $\mathcal M$ are denoted,  respectively. 

For Hilbert spaces  $\mathcal H$ and $\mathcal K$, let    $\mathcal L(\mathcal H, \mathcal K)$ denote the space of (bound-ed, linear) 
transformations acting from $\mathcal H$ to $\mathcal K$. Suppose that $T\in\mathcal L(\mathcal H)$, $R\in\mathcal L(\mathcal K)$, 
$X\in\mathcal L(\mathcal H, \mathcal K)$, and $X$  \emph{intertwines} $T$ and $R$, that is, $XT=RX$. If $X$ is unitary, then $T$ and $R$ 
are called  \emph{unitarily equivalent}, in notation: $T\cong R$. If $X$ is invertible, that is, $X^{-1}\in\mathcal L(\mathcal K, \mathcal H)$, 
then $T$ and $R$ are called \emph{similar}, in notation: $T\approx R$.
If $X$ is a \emph{quasiaffinity}, that is, $\ker X=\{0\}$ and $\operatorname{clos}X\mathcal H=\mathcal K$, then
$T$ is called a  \emph{quasiaffine transform} of $R$, in notation: $T\prec R$. 
If  $\operatorname{clos}X\mathcal H=\mathcal K$, we write $T\buildrel d \over\prec R$. 
If $T\prec R$ and 
$R\prec T$, then $T$ and $R$ are called  \emph{quasisimilar}, in notation: $T\sim R$. Clearly, $T\prec R$ if and only if 
$R^\ast\prec T^\ast$. 

An  operator $T$ is called  \emph{power bounded}, if $\sup_{n\geq 0}\|T^n\| < \infty$. 
An operator $T$ is called  \emph{polynomially bounded}, if there exists a constant $C$
 such that 
$\|p(T)\|\leq C \max\{|p(z)|:  |z|\leq 1\} $ for every (analytic) polynomial $p$. 
The smallest such constant is called the \emph{polynomial bound} of $T$ and is denoted here by $C_{{\rm pol}, T}$. 
 
Every  polynomially bounded operator can be represented as a direct sum of an operator similar to a  singular unitary operator and of an   \emph{absolutely continuous} (a.c.) polynomially bounded operator, that is, an 
operator which  admits an $H^\infty$-functional calculus, see  \cite{mlak} or {\cite[Theorems 13, 17 and  23]{ker16}}. 
 In the present paper, absolutely continuous polynomially bounded operators are considered.  
 (Although many results on polynomially 
bounded operators that will be used in the present paper were originally  proved by Mlak \cite{mlak}, 
we will refer to \cite{ker16} for the convenience of references.)

An  operator $T$  is called a  \emph{contraction}, if $\|T\|\leq 1$. 
Every contraction is polynomially bounded with the constant $1$ by the von Neumann inequality (see, for example,  {\cite[Proposition I.8.3]{sfbk}}). Clearly, every polynomially bounded operator is power bounded. 
(It is well known that the converse is not true, see \cite{fog} for the first example of  a power bounded but not polynomially bounded operator, 
and \cite{pis} for  the first example of a  polynomially bounded operator which is not similar to a contraction.)

Let   $T\in\mathcal L(\mathcal H)$ be a power bounded operator. 
 It is easy to see that the space 
$$\mathcal H_{T,0}=\{x\in\mathcal H \ :\ \|T^nx\|\to 0\}$$ is invariant for $T$ (sf. {\cite[Theorem II.5.4]{sfbk}}). Classes $C_{ab}$, $a$, $b=0,1,\cdot$, of power bounded operators 
are defined as follows (see  {\cite[Sec. II.4]{sfbk}} and \cite{ker89}). If $\mathcal H_{T,0}=\mathcal H$, then  $T$ is  \emph{of class} $C_{0\cdot}$, while if  $\mathcal H_{T,0}=\{0\}$, then $T$ is 
 \emph{of class} $C_{1\cdot}$. Furthermore,  $T$  is \emph{of class} $C_{\cdot a}$, if $T^\ast$ is of class  $C_{a\cdot}$,  
 and $T$ is  \emph{of class} $C_{ab}$, if $T$ is of classes $C_{a\cdot}$ and $C_{\cdot b}$, $a$, $b=0,1$. 

Let $T$ and $R$ be power bounded operators, and let $T\prec R$.  
It easily follows from the definition that if $R$ is of class $C_{1\cdot}$ or of class $C_{\cdot 0}$, 
then $T$ is of class $C_{1\cdot}$ or of class $C_{\cdot 0}$, too. Clearly, any isometry is of class $C_{1\cdot}$, a unitary operator is of class $C_{11}$, and the unilateral shift is of class $C_{10}$. 

The notions of  isometric and unitary asymptotes will be used. 
Although there exist further studies of these notions (\cite{ker16}, \cite{kerntuple}), we restrict ourselves  
to the case of power bounded operators.  
For a power bounded operator $T\in\mathcal L(\mathcal H)$  the \emph{isometric asymptote} $(X_+,T_+^{(a)})$   
 can be defined using a Banach limit, see \cite{ker89}.
Here $T_+^{(a)}$ is an isometry (on a Hilbert space), and $X_+$ is the \emph{canonical intertwining mapping}: 
$X_+T=T_+^{(a)}X_+$. Recall that $\ker X_+ = \mathcal H_{T,0}$, and the range of $X_+$ is dense. 
Thus, $T\buildrel d\over\prec  T_+^{(a)}$. In particular, if $T$ is cyclic, then  $T_+^{(a)}$ is cyclic, too.
A power bounded operator $T$ is of class $C_{11}$ if and only if $T$ is quasisimilar to  a unitary operator; then 
$T_+^{(a)}$ is unitary and $T\sim T_+^{(a)}$  (see {\cite[Propositions  II.3.4 and II.5.3]{sfbk}}). 

The \emph{unitary asymptote} $(X,T^{(a)})$ of a power bounded operator $T\in\mathcal L(\mathcal H)$ is a pair where 
 $T^{(a)}\in\mathcal L(\mathcal H^{(a)})$ (here $\mathcal H^{(a)}$ is a some Hilbert space) is the minimal unitary extension of $T_+^{(a)}$, and $X$ is an extension of $X_+$. Therefore,
\begin{equation}\label{unitdense}
\bigvee_{n\geq 0}(T^{(a)})^{-n} X \mathcal H= \mathcal H^{(a)}.
\end{equation}
 In particular, if $T$ is cyclic, then $T^{(a)}$ is cyclic, too.
If $T_+^{(a)}$ is unitary, then, of course, $(X_+,T_+^{(a)})=(X,T^{(a)})$. Note that 
 $T_+^{(a)}$ and  $T^{(a)}$ are defined up to unitarily equivalence.

 Set $T_{0\cdot}=T|_{\mathcal H_{T,0}}$ and $T_{1\cdot}=P_{\mathcal H_{T,0}}T|_{\mathcal H_{T,0}}$. 
By {\cite[Lemma 1]{ker89}}, $T_{0\cdot}$ and $T_{1\cdot}$ are of  classes $C_{0\cdot}$ and 
$C_{1\cdot}$, respectively. Thus, every power bounded operator has the   triangulations of  the form 
$$\begin{pmatrix}C_{0\cdot} & * \\ \mathbb O & C_{1\cdot}\end{pmatrix} \ \ \text{ and } \ \   
\begin{pmatrix}C_{\cdot 1} & * \\ \mathbb O & C_{\cdot 0}\end{pmatrix}.$$ 
By  {\cite[Theorem 3]{ker89}}, $T^{(a)}\cong T_{1\cdot}^{(a)}$. 

Let $T$ be a power bounded operator,  let $U$ be a unitary operator, and let $T\buildrel d\over\prec U$. 
By  {\cite[Theorem 2]{ker89}}, $T^{(a)}$ contains $U$ as an orthogonal summand, that is, there exists a unitary 
operator $V$ such that  $T^{(a)}\cong U\oplus V$. On the other hand, it follows from \eqref{unitdense} that 
$T\buildrel d\over\prec T^{(a)}$ (but this relation \emph{ is not } realized by the canonical intertwining mapping in general). 

A particular case of \cite{ker07} is the following (see also {\cite[Sec. IX.3]{sfbk}} 
and references in \cite{ker07} and \cite{sfbk} for the history of a question). 
Let $T$ be a contraction, and let $T^{(a)}$ contain the bilateral shift of 
finite or infinite multiplicity as an orthogonal summand. Then there exists $\mathcal M\in \operatorname{Lat}T$ 
such that $T|_{\mathcal M}$ is similar to the unilateral shift of the same multiplicity. 
The proof is based on the Sz.-Nagy--Foias functional model for contractions, see \cite{sfbk}. 
In the present paper this result is generalized to polynomially bounded operators, 
but in the simplest case. Namely, it is proved that 
if $T$ is a polynomially bounded operator and $T^{(a)}$ contains the bilateral shift of 
 multiplicity $1$, then there exists $\mathcal M\in \operatorname{Lat}T$ 
such that $T|_{\mathcal M}$ is similar to the unilateral shift of multiplicity $1$. The proof is based on a result from \cite{bour}. 

Before formulating the main result of the paper, we introduce some notation.
 $\mathbb T$ and $\mathbb D$ denote the unit circle and the open unit disc, respectively.   
$S$ is the \emph{unilateral shift} of multiplicity $1$,  that is, $S$ is the operator of multiplication by the independent variable on the Hardy space $H^2$ on  $\mathbb T$. 
 The normalized Lebesgue measure on $\mathbb T$ is denoted by $m$.  
For a measurable set $\sigma\subset\mathbb T$ denote by $U_\sigma$ 
the operator of multiplication by the independent variable on $L^2(\sigma) := L^2(\sigma,m)$. 
Clearly, $U_{\mathbb T}$ is the \emph{bilateral shift} of multiplicity $1$. It is well known and easy to see that $S$ and
 $U_\sigma$ are a.c. contractions.

\begin{theorem}\label{thmmain} Suppose that $T$ is a cyclic a.c. polynomially bounded operator of class $C_{1\cdot}$, and  
$T^{(a)}\cong U_{\mathbb T}$. Then there exists $\mathcal M\in \operatorname{Lat}T$ 
such that $T|_{\mathcal M}\approx S$. 
\end{theorem}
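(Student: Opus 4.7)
The goal is to construct a bounded, bounded-below operator $Y\colon H^2\to\mathcal H$ satisfying the intertwining relation $TY=YS$; the image $\mathcal M:=YH^2$ will then be closed, invariant for $T$, and $Y$ will realize the similarity $T|_{\mathcal M}\approx S$. Writing $h:=Y\mathbb 1\in\mathcal H$ and using $Yz^n=T^nh$ together with the $H^\infty$ functional calculus (available because $T$ is a.c.\ polynomially bounded), any such $Y$ must have the form $Yf=f(T)h$. The problem therefore reduces to producing $h\in\mathcal H$ satisfying the two-sided norm equivalence
\[
c\,\|p\|_{H^2}\;\le\;\|p(T)h\|_{\mathcal H}\;\le\;C\,\|p\|_{H^2}
\]
for every analytic polynomial $p$ and some constants $c,C>0$ independent of $p$.

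\emph{Setup via the unitary asymptote.} Let $X\colon\mathcal H\to L^2(\mathbb T)$ be the canonical intertwining of the unitary asymptote under the identification $T^{(a)}\cong U_{\mathbb T}$. Since $T\in C_{1\cdot}$, the map $X$ is injective; cyclicity of $T$ yields dense range in $L^2(\mathbb T)$; and $\|X\|\le C_{\mathrm{pol},T}$. The intertwining $XT=U_{\mathbb T}X$ extends via the $H^\infty$ calculus to $Xf(T)=f\cdot X$ for every $f\in H^\infty$.

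\emph{Lower bound.} For any polynomial $p$ and $h\in\mathcal H$,
\[
\|p\cdot Xh\|_{L^2(\mathbb T)}\;=\;\|Xp(T)h\|_{L^2(\mathbb T)}\;\le\;C_{\mathrm{pol},T}\,\|p(T)h\|_{\mathcal H}.
\]
Hence, as soon as $|Xh|\ge c_0>0$ a.e.\ on $\mathbb T$, the desired lower estimate holds with $c=c_0/C_{\mathrm{pol},T}$. A natural model is an $h$ with $Xh\equiv\mathbb 1$, though in general such $h$ need not lie in $\mathcal H$ since $X\mathcal H$ is only dense in $L^2(\mathbb T)$.

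\emph{Upper bound --- the key difficulty.} The upper estimate $\|p(T)h\|\le C\|p\|_{H^2}$ is a Carleson-type embedding of $H^2$ into $\mathcal H$. It does \emph{not} follow from polynomial boundedness alone, which only controls $\|p(T)h\|$ in terms of $C_{\mathrm{pol},T}\|p\|_\infty\|h\|$, and $\|p\|_\infty$ is strictly larger than $\|p\|_{H^2}$ in general. This is precisely where the result from \cite{bour} is invoked: under the hypothesis $T^{(a)}\cong U_{\mathbb T}$ together with cyclicity of $T$, the Bourgain-type input supplies a vector $h\in\mathcal H$ realising such an $H^2$-Carleson bound, while simultaneously keeping $|Xh|$ bounded below away from zero.

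\emph{Finishing.} With $h$ in hand, extend $Y\colon p\mapsto p(T)h$ by continuity from polynomials to a bounded map $H^2\to\mathcal H$. The intertwining $TY=YS$ holds trivially on polynomials and persists under the $H^2$-closure; the two-sided norm equivalence makes $Y$ bounded below with closed range. Thus $\mathcal M:=YH^2\in\operatorname{Lat}T$ and $Y$ implements the similarity $T|_{\mathcal M}\approx S$.

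\emph{Main obstacle.} The technical heart is the Bourgain-type step: producing a single $h\in\mathcal H$ for which the $H^2$-Carleson upper bound $\|p(T)h\|\lesssim\|p\|_{H^2}$ and the non-degeneracy $|Xh|\gtrsim 1$ a.e.\ both hold. Everything else is formal manipulation of the $H^\infty$ calculus, the properties of the unitary asymptote, and density.
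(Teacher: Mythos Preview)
Your outline correctly identifies the overall strategy and the central difficulty, but what you present is a plan rather than a proof: the step you label ``Bourgain-type input'' --- producing a single vector $h$ for which both the upper bound $\|p(T)h\|\lesssim\|p\|_{H^2}$ and the non-degeneracy $|Xh|\gtrsim 1$ a.e.\ hold --- is precisely the content of the theorem, and you leave it as a black box. Bourgain's theorem as used here (Corollary~\ref{tha}) gives, for each fixed $x\in\mathcal H$, a weight $\psi\in L^2(\mathbb T)$ with $\|\psi\|_2\le 1$ and $\|\varphi(T)x\|\le KC_{\mathrm{pol},T}\|x\|\,\|\varphi\psi\|_{L^2}$. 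This is an upper bound with weight $|\psi|^2$, not the constant weight needed for a genuine $H^2$ bound. By Lemma~\ref{lemfpsi} the same $\psi$ controls $Xx$ from above, $|Xx|\lesssim|\psi|$; so if $|Xx|$ is bounded below then $|\psi|$ is bounded below, but nothing forces $|\psi|$ to be bounded above, which is exactly what the $H^2$ upper bound would require. A single invocation of Bourgain does not resolve this tension.

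The paper's actual construction is substantially more elaborate and does not proceed via a single vector $h$. Rather, the intertwining $W\colon H^2\to\mathcal N$ is built as a weak limit of operators $W_t$, each an infinite series (formula~\eqref{wwtpsi}). Roughly: fix a base vector $x_0$ and a dense family $\{\varphi_n(T)x_0\}_n$; to each $\varphi_n(T)x_0$ Bourgain associates a weight $\psi_n$. Proposition~\ref{proptau2} (which uses the $C_{1\cdot}$ hypothesis and the unitary asymptote in an essential way, via Lemma~\ref{asymp1}) shows that the ``bad'' sets $\sigma_n=\{|\varphi_n f|\le\gamma\delta|\psi_n|\}$, with $f=Xx_0$, have null intersection. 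This yields a measurable partition $\{\tau_n\}$ of a suitably chosen set $\sigma\subset\mathbb T$ with $|\varphi_n f|\gtrsim|\psi_n|$ on $\tau_n$. One then patches the family together via localizing outer functions $\eta_n$ (essentially $|\eta_n|\approx\chi_{\tau_n}$) and regularized weights $\psi_{nt}$, obtaining $W_t$ with uniformly bounded norm and $XW_t=\alpha_t(\cdot)$ for functions $\alpha_t\to\alpha$ in $L^2$ with $1/\alpha\in L^\infty$ (Proposition~\ref{propalphasigma}, Lemma~\ref{lempsit}). A further complication is controlling the construction on $\mathbb T\setminus\sigma$ while forcing the range of $W$ into the original space; for this the given operator $T_0$ is first embedded (Proposition~\ref{proptt0tt}) as the restriction to an invariant subspace $\mathcal N$ of a larger operator $T$ of class $C_{\cdot 1}$, and the adjoint asymptote $X_*^*$ is used to guarantee $W H^2\subset\mathcal N$. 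In short, your reduction is correct and you have named the obstacle accurately, but the resolution is a delicate infinite patching argument occupying essentially all of Sections~3--5, not a single application of Bourgain.
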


\begin{corollary}\label{cormain} Suppose that $T$ is a polynomially bounded operator such that 
$T\buildrel d\over\prec  U_{\mathbb T}$. Then for every $c>0$ there exist $\mathcal M\in \operatorname{Lat}T$ 
and $W\in \mathcal L(H^2,\mathcal M)$ such that $W$ is invertible, $WS=T|_{\mathcal M}W$ and $$\|W\|\|W^{-1}\|\leq 
(1+c)\bigl(\sqrt 2(K^2+2)K C_{{\rm pol}, T}+1\bigr)\sqrt{ K^2 C_{{\rm pol}, T}^2 + 1} K  C_{{\rm pol}, T}^2.$$ 
\end{corollary}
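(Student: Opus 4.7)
The corollary is deduced from Theorem \ref{thmmain} by reducing the hypothesis $T\buildrel d\over\prec U_{\mathbb T}$ to the hypotheses of that theorem applied to a suitably chosen cyclic $T$-invariant subspace, and then composing quantitative estimates.

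\emph{Step 1 (reduction to the a.c.\ case).} Decompose $T\cong T_s\oplus T_a$ with $T_s$ similar to a singular unitary operator and $T_a$ an a.c.\ polynomially bounded operator. Any bounded transformation intertwining a singular unitary with an a.c.\ unitary is zero, so the given dense-range intertwiner $Y\colon\mathcal H\to L^2(\mathbb T)$ must vanish on the $T_s$-summand. Hence $T_a\buildrel d\over\prec U_{\mathbb T}$, and since every invariant subspace of $T_a$ is also invariant for $T$, one may assume $T$ itself is a.c.

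\emph{Step 2 (cyclic invariant subspace with the right properties).} Take $\mathcal N:=\bigvee_{n\geq 0} T^n x$ for a vector $x\in\mathcal H$ chosen so that (i) $Y|_{\mathcal N}$ is injective, and (ii) $\bigvee_{n\geq 0} U_{\mathbb T}^n Yx=L^2(\mathbb T)$, which holds, e.g., when $\log|Yx|\notin L^1(\mathbb T)$. Condition (i) forces that $T_1:=T|_{\mathcal N}$ is of class $C_{1\cdot}$: if $y\in\mathcal N$ with $\|T_1^n y\|\to 0$, then $\|Yy\|=\|U_{\mathbb T}^n Yy\|=\|YT_1^n y\|\to 0$, whence $Yy=0$ and $y=0$ by injectivity. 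Condition (ii) gives $T_1\buildrel d\over\prec U_{\mathbb T}$ and, together with cyclicity of $T_1$, absolute continuity, and \cite[Theorem~2]{ker89}, yields $T_1^{(a)}\cong U_{\mathbb T}$. Density of $Y\mathcal H$ in $L^2(\mathbb T)$ allows such a choice of $Yx$ up to an approximation slack $1+c$, which is the source of the corresponding factor in the stated bound.

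\emph{Step 3 (application of Theorem \ref{thmmain} and norm bookkeeping).} Apply Theorem \ref{thmmain} to $T_1$ to produce $\mathcal M\in\operatorname{Lat}T_1\subseteq\operatorname{Lat}T$ and an invertible $W\in\mathcal L(H^2,\mathcal M)$ with $WS=T|_{\mathcal M}W$. The bound on $\|W\|\,\|W^{-1}\|$ then follows by composing the approximation factor $1+c$ with the explicit constants appearing in the proof of Theorem \ref{thmmain}, which in turn stem from the quantitative result of \cite{bour}. The main obstacle lies in Step 2: arranging simultaneously the injectivity of $Y|_{\mathcal N}$ (needed to obtain $C_{1\cdot}$) and condition (ii) (needed for the unitary asymptote to be exactly $U_{\mathbb T}$), while retaining quantitative control of $\|x\|$. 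Injectivity of $Y|_{\mathcal N}$ is delicate, since elements of $\mathcal N$ arise as strong limits $p_n(T)x$ whereas their $Y$-images $p_n(z)Yx$ converge only in $L^2$-norm; one must ensure that this discrepancy produces no nonzero element of $\ker Y\cap\mathcal N$.
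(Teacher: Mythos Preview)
Your proposal has a genuine gap in Step~2 that you yourself flag but do not close. You try to choose $x$ so that $Y|_{\mathcal N}$ is injective, in order to force $T|_{\mathcal N}$ to be of class $C_{1\cdot}$. But since $\ker Y\supset\mathcal H_{T,0}$ (as you observe), and $\mathcal H_{T,0}$ may well meet every cyclic subspace nontrivially, there is no mechanism here to guarantee such an $x$ exists. The paper sidesteps this entirely: after reducing to the a.c.\ case and picking a cyclic $\mathcal N_0$ with $T|_{\mathcal N_0}\buildrel d\over\prec U_{\mathbb T}$ (via Lemma~\ref{lemdense}, which produces $x$ with $Yx\neq 0$ a.e.), it does \emph{not} claim $T|_{\mathcal N_0}$ is $C_{1\cdot}$. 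Instead it takes the triangulation $T|_{\mathcal N_0}=\begin{pmatrix}T_0 & *\\ \mathbb O & T_1\end{pmatrix}$ with $T_0$ of class $C_{0\cdot}$ and $T_1$ of class $C_{1\cdot}$, applies Theorem~\ref{thmmain} to the quotient $T_1$, and then lifts the shift-type subspace back up to $\operatorname{Lat}T$ by Lemma~\ref{proptrian}. This two-step (quotient, then lift) is exactly what replaces your unproved injectivity.

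Your Step~3 is also too thin. The displayed bound does not fall out of ``composing the approximation factor $1+c$ with the explicit constants appearing in the proof of Theorem~\ref{thmmain}''. In the paper the quantitative statement is a separate result, Theorem~\ref{thm66}, proved for operators $T_1\sim S$: one reruns the construction of Theorem~\ref{thmmain} in that special case (where $\tau=\emptyset$, $X_*^*=Y_0\oplus I_{H^2_-}$, $g=g_1$), replacing the qualitative choice \eqref{vyborsigma} of $\sigma$ by the explicit $\sigma=\{|g_1|<c_3\}$ and tracking $C_{{\rm pol},T}$, $\|X\|$, $\|X_*\|$ through \eqref{estxxtt} and the estimates leading to \eqref{alphainvsigma}, \eqref{alphainvsmsigma}. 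The factor $1+c$ comes from pushing $c_3,\gamma_0,\delta_0\to 1$ and $c_1,\varepsilon_{00},\xi\to 0$ in those estimates, not from a density approximation of $Yx$.
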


\begin{remark} Examples of polynomially bounded operators satisfying Theorem \ref{thmmain}  and not similar to contractions can be found in \cite{gam16}. Moreover, it follows from Theorem \ref{thmmain}  and Corollary \ref{cormain} 
that if $T$ is a polynomially bounded operator such that $T\sim U_{\mathbb T}$ 
and the product of intertwining quasiaffinities is an analytic function of $U_{\mathbb T}$, 
or if $T\prec S$, then $T$ is similar to an operator constructed in  {\cite[Proposition 2.7 or Corollary 2.3]{gam16}}, respectively.
\end{remark}

\begin{remark} Let $T$ satisfy the assumption of Corollary \ref{cormain}. By  {\cite[Theorem 4]{ker89}}, 
$\mathbb T\subset \sigma(T)$. By \cite{rej}, $T$ either has a nontrivial hyperinvariant subspace or is reflexive. 
 Therefore, the result on existence of nontrivial invariant subspaces of $T$ is not new. The reflexivity of  contractions satisfying the assumptions  of Corollary \ref{cormain} is proved in  
\cite{tak87} and \cite{tak89}, see also  {\cite[Theorem IX.3.8]{sfbk}}. This proof   
can be generalized to polynomially bounded operators. We expect to give a detailed proof  later elsewhere. 
\end{remark}

\begin{remark} Suppose that $N\in\mathbb N$ and $T$ is a polynomially bounded operator such that 
$T\buildrel d\over\prec \oplus_{n=1}^N U_{\mathbb T}$. It is possible to prove by induction that  
 there exists $\mathcal M\in \operatorname{Lat}T$ such that $T|_{\mathcal M}\approx \oplus_{n=1}^N S$. 
In the case of infinite sum the question remains open.
\end{remark}

 The paper is organized as follows. In Sec. 2, Bourgain's result \cite{bour} is cited, and its corollaries which hold true
for arbitrary a.c. polynomially bounded operators are given. In Sec. 3 and 4, auxiliary results for some functions and for 
operators intertwined with unitaries, respectively, are given. The main part of the paper is Sec. 5,  where 
Theorem \ref{thmmain} is proved. In Sec. 6 Corollary \ref{cormain} is proved.

\section {Bourgain's result}

 $\text{\bf A}$ denotes the disc algebra. For a positive finite Borel measure $\mu$ on $\mathbb T$ set
 $P^2(\mu)=\operatorname{clos}_{L^2(\mu)}\text{\bf A}$, 
and denote  by $S_\mu$ the operator of multiplication by the independent variable on $P^2(\mu)$. 

\begin{theoremcite}[\cite{bour}]\label{mainbour} There exists a universal constant $K$ with the following property. 
Let $\mathcal H$ be a Hilbert space, 
and let $\text{\bf W}\in\mathcal L(\text{\bf A},\mathcal H)$ (that is, $\text{\bf W}$ is a bounded linear transformation 
from $\text{\bf A}$ to $\mathcal H$).   Then there exists 
a positive Borel measure $\mu$ on $\mathbb T$ such that $\mu(\mathbb T)=1$ and 
$$\|\text{\bf W}\varphi\|\leq K \|\text{\bf W}\|\Bigl(\int_{\mathbb T}|\varphi|^2\text{\rm d}\mu\Bigr)^{1/2} 
\ \ \text{ for every }
\varphi\in \text{{\bf A}}.$$
\end{theoremcite}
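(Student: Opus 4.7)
The assertion is Bourgain's celebrated theorem that every bounded operator from the disc algebra $\mathbf{A}$ into a Hilbert space admits a factorization through $L^2(\mu)$ for some probability measure $\mu$ on $\mathbb{T}$, with a universal constant; equivalently, $\mathbf{A}$ has Rademacher cotype $2$. The plan is to follow Bourgain's original two-stage architecture: first reduce, by soft functional-analytic arguments, to a finite-dimensional Khintchine-type inequality, and then prove that inequality by hard harmonic analysis on $\mathbb{T}$.

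The first stage is a standard Pietsch-style reduction. By a weak-$\ast$ compactness and Hahn-Banach separation argument on the set of probability measures on $\mathbb{T}$, the existence of $\mu$ satisfying
$$\|\mathbf{W}\varphi\|\leq K\|\mathbf{W}\|\Bigl(\int_{\mathbb{T}}|\varphi|^2\,\mathrm{d}\mu\Bigr)^{1/2}\qquad\text{for all }\varphi\in\mathbf{A}$$
is equivalent to the finite-dimensional inequality
$$\sum_{k=1}^n\|\mathbf{W}\varphi_k\|^2\leq K^2\|\mathbf{W}\|^2\sup_{t\in\mathbb{T}}\sum_{k=1}^n|\varphi_k(t)|^2$$
holding for every finite family $\varphi_1,\dots,\varphi_n\in\mathbf{A}$. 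This reduction is essentially the Pietsch factorization theorem for $2$-summing operators applied to the subspace $\mathbf{A}\subset C(\mathbb{T})$, and is routine once the finite inequality is available.

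The main obstacle is establishing the finite inequality, which (since $\mathbf{W}$ and $\mathcal{H}$ are arbitrary) is, via Maurey's theorem, equivalent to proving that $\mathbf{A}$ has Rademacher cotype $2$: namely
$$\Bigl(\sum_k\|\varphi_k\|_{L^2(m)}^2\Bigr)^{1/2}\leq C\Bigl(\int_0^1\Bigl\|\sum_k r_k(t)\varphi_k\Bigr\|_{\mathbf{A}}^2\,\mathrm{d}t\Bigr)^{1/2}$$
for all finite families $\varphi_k\in\mathbf{A}$, where $r_k$ are the Rademacher functions. Bourgain's proof of this cotype inequality requires genuine harmonic analysis on $\mathbb{T}$: random trigonometric series estimates of Salem-Zygmund and Marcus-Pisier type to control $L^\infty$-norms of analytic polynomials with random coefficients, $L^p$-boundedness of the Riesz projection combined with Paley-type decompositions of $H^1$, and a delicate interpolation and iteration scheme that removes a logarithmic loss in order to produce a universal constant. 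The analyticity constraint encoded in membership in $\mathbf{A}$ obstructs the elementary real-variable cotype arguments available for $L^p$ spaces, and overcoming this obstruction is precisely the depth of Bourgain's contribution; I would not expect to improve on it, and would treat the cotype-$2$ property of $\mathbf{A}$ as the irreducible hard kernel of the argument.
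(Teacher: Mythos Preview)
The paper does not prove this statement: it is labelled as a cited theorem (the \texttt{theoremcite} environment, attributed to \cite{bour}) and is used as a black box throughout, beginning with Corollary~\ref{tha}. There is therefore nothing in the paper to compare your proposal against.

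That said, your outline is a reasonable high-level summary of Bourgain's original argument: the Pietsch-type reduction to a finite Khintchine inequality, the equivalence (via Maurey's theorem, since $\mathbf A$ has trivial cotype infimum otherwise) with cotype~$2$ of the disc algebra, and the identification of the hard analytic core as the cotype estimate itself. You correctly flag that the cotype-$2$ step is the genuine content and is not soft. If your intent was to supply a proof where the paper gives none, you should be explicit that this is a sketch and that the substantive harmonic-analytic work (random trigonometric series bounds, the iteration removing the logarithmic loss) is being delegated to \cite{bour}; as written, the proposal is more a roadmap than a proof.
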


The following corollary  is  {\cite[Lemma 2.1]{bercpr}} emphasized for a.c. polynomially bounded operators.

\begin{corollary}\label{tha}  
Suppose that $\mathcal H$ is a Hilbert space, $T\in\mathcal L(\mathcal H)$ is an a.c. polynomially bounded operator, 
$C_{{\rm pol}, T}$ is the polynomial bound of $T$, and $x\in\mathcal H$. Then there exist $\psi\in L^2(\mathbb T,m)$ 
such that $\int_{\mathbb T}|\psi|^2\text{\rm d}m\leq 1$ and 
$$\|\varphi(T)x\|\leq K C_{{\rm pol}, T}\|x\|\Bigl(\int_{\mathbb T}|\varphi|^2|\psi|^2\text{\rm d}m\Bigr)^{1/2} \ \ \text{ for every }
\varphi\in H^\infty$$
and $W\in\mathcal L(P^2(|\psi|^2m),\mathcal H)$ such that 
\begin{align*}& WS_{|\psi|^2m}=TW, \ \ \ \|W\|\leq K C_{{\rm pol}, T}\|x\| \\ \text{ and } \ \ \ 
& W\varphi=\varphi(T)x \ \ \text{ for every }\varphi\in H^\infty. \end{align*}
\end{corollary}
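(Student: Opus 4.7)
The map $\mathbf{W}:\mathbf{A}\to\mathcal{H}$ defined by $\mathbf{W}\varphi=\varphi(T)x$ is well-defined by the $\mathbf{A}$-functional calculus of the absolutely continuous polynomially bounded operator $T$, and satisfies $\|\mathbf{W}\|\leq C_{{\rm pol},T}\|x\|$. Apply Theorem \ref{mainbour} to produce a Borel probability measure $\mu$ on $\mathbb{T}$ such that
$$\|\varphi(T)x\|\leq K\,C_{{\rm pol},T}\|x\|\,\|\varphi\|_{L^2(\mu)}\qquad\text{for every }\varphi\in\mathbf{A}.$$

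The crux of the argument is to replace $\mu$ by its $m$-absolutely continuous part. Write the Lebesgue decomposition $\mu=|\psi|^2 m+\mu_s$ with $\psi\in L^2(\mathbb{T},m)$, $\int|\psi|^2\,dm\leq\mu(\mathbb{T})=1$, and $\mu_s\perp m$. Choose a sequence $\{g_n\}\subset\mathbf{A}$ with $\|g_n\|_\infty\leq 1$, $g_n\to 1$ $m$-a.e.\ on $\mathbb{T}$, and $g_n\to 0$ $\mu_s$-a.e., which is available by a standard peaking construction, since $\mu_s$ is concentrated on an $m$-null Borel set. Apply the Bourgain bound to $g_n\varphi\in\mathbf{A}$ for fixed $\varphi\in\mathbf{A}$ and let $n\to\infty$: by dominated convergence, $\int|g_n\varphi|^2\,d\mu\to\int|\varphi|^2|\psi|^2\,dm$. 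On the left, since $T$ is absolutely continuous, its $H^\infty$-calculus is weak-$\ast$ continuous, and $g_n\to 1$ weak-$\ast$ in $H^\infty$ forces $g_n(T)\to I$ in the weak operator topology; hence $g_n(T)\varphi(T)x\to\varphi(T)x$ weakly in $\mathcal{H}$ and $\|\varphi(T)x\|\leq\liminf_n\|g_n(T)\varphi(T)x\|$. This gives the target inequality for $\varphi\in\mathbf{A}$, and extends to all $\varphi\in H^\infty$ via the dilations $\varphi_r(z)=\varphi(rz)\in\mathbf{A}$, using weak-$\ast$ continuity of the calculus on the left and dominated convergence in $L^2(|\psi|^2 m)$ on the right.

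Define $W$ on $\mathbf{A}$ by $W\varphi=\varphi(T)x$. The inequality permits $W$ to extend continuously to a bounded operator $W:P^2(|\psi|^2 m)\to\mathcal{H}$ of norm at most $K\,C_{{\rm pol},T}\|x\|$. The intertwining $WS_{|\psi|^2 m}=TW$ is immediate on $\mathbf{A}$ from $(zf)(T)x=T\,f(T)x$, and extends by density; the identity $W\varphi=\varphi(T)x$ for $\varphi\in H^\infty$ follows from $\varphi_r\to\varphi$ in $P^2(|\psi|^2 m)$ together with weak-$\ast$ continuity of the $H^\infty$-calculus.

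The main technical obstacle is the second step: constructing the peaking sequence $\{g_n\}$ that separates the two parts of $\mu$, and then legitimately passing to the limit. This is precisely where the absolute continuity of $T$---via weak-$\ast$ continuity of its $H^\infty$-calculus---is essentially used; without it, the measure $\mu$ produced by Bourgain's theorem could retain a nontrivial singular contribution.
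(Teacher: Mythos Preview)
Your argument is correct, but the key step---eliminating the singular part $\mu_s$ of the Bourgain measure---proceeds differently from the paper. The paper first extends $\mathbf{W}$ to all of $P^2(\mu)$, invokes the classical decomposition $P^2(\mu)=P^2(|\psi|^2 m)\oplus L^2(\mu_s)$ (so that $S_{\mu_s}$ is a singular unitary), and then observes that $W|_{L^2(\mu_s)}$ intertwines a singular unitary with the a.c.\ operator $T$, forcing $W|_{L^2(\mu_s)}=\mathbb O$ by \cite[Proposition~15]{ker16}; thus $\mu$ may simply be replaced by $|\psi|^2 m$. This is short and structural but relies on the $P^2(\mu)$ splitting. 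Your route is more hands-on: a peaking sequence $\{g_n\}\subset\mathbf{A}$ kills $\mu_s$ directly inside the Bourgain estimate, and absolute continuity enters via weak-$\ast$ continuity of the calculus to recover $\|\varphi(T)x\|$ as $\liminf_n\|g_n(T)\varphi(T)x\|$. Both approaches use $\varphi_r\to\varphi$ for the passage from $\mathbf{A}$ to $H^\infty$. One small caveat: arranging $\|g_n\|_\infty\le 1$ exactly is a bit delicate with the bare peak-function construction (one naturally gets $\|g_n\|_\infty\le 2$, or $\le 1+\varepsilon_n$ after a M\"obius adjustment), but any uniform bound suffices for both the dominated-convergence step on the right and the weak-$\ast$ convergence $g_n\to 1$ on the left, so this does not affect your argument.
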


\begin{proof}
Define $\text{\bf W}\in\mathcal L(\text{\bf A},\mathcal H)$ by the formula 
$$\text{\bf W}\varphi=\varphi(T)x, \ \ \varphi\in \text{\bf A}.$$ 
Let $\mu$ be the measure from Theorem \ref{mainbour}. Then $\mu=|\psi|^2m+\mu_s$, 
where $\psi\in L^2(\mathbb T,m)$ and $\mu_s$ is a positive Borel measure on $\mathbb T$ singular with respect to $m$. 
We have  $$P^2(\mu)=P^2(|\psi|^2m)\oplus L^2(\mu_s)$$
({\cite[Proposition III.12.3]{conw}} or {\cite[Corollary A.2.2.1]{nik}}), 
$S_{\mu_s}$ is a singular unitary operator,  
and $\text{\bf W}$ has a continuous extension on $P^2(\mu)$  denoted by $W$. Clearly, $WS_\mu=TW$.
Therefore, $W|_{L^2(\mu_s)}S_{\mu_s}=TW|_{L^2(\mu_s)}$. Since $T$ is a.c., $W|_{L^2(\mu_s)}=\mathbb O$
( {\cite[Proposition 15]{ker16}} or \cite{mlak}). 
Thus, $\mu$ can be replaced by $|\psi|^2m$. 

Let $ \varphi\in H^\infty$. Set $\varphi_r(\zeta)=\varphi(r\zeta)$, $\zeta\in\operatorname{clos}\mathbb D$, $0<r<1$. 
Then $ \varphi_r\in \text{\bf A}$, and $\varphi_r\to\varphi$ when $r\to 1$ in the weak-$\ast$ topology in $H^\infty$ and 
in the norm in $L^2(|\psi|^2m)$ simultaneously. The conclusion of the corollary follows from these convergences.
\end{proof}

\begin{remark}\label{remc1} 
For contractions $T$,  $C_{{\rm pol}, T}=1$, and  Corollary \ref{tha} is proved in  {\cite[Lemma 3]{berctak}} with $K=1$. 
The proof is based on the existence of isometric dilations for contractions, see {\cite[Theorem I.4.1]{sfbk}}. 
\end{remark}

\begin{remark}\label{h2} Let $\psi\in L^2(\mathbb T)$. If $\int_{\mathbb T}\log|\psi|\text{\rm d}m=-\infty$, 
then $P^2(|\psi|^2m)=L^2(|\psi|^2m)$. 
 If $\int_{\mathbb T}\log|\psi|\text{\rm d}m>-\infty$, we accept that $\psi$ is an outer function. Then 
 $$P^2(|\psi|^2m)=\frac {H^2}{\psi}=\Bigl\{\frac{h}{\psi}, \ h\in H^2\Bigr\}, 
\ \ \Bigl\|\frac{h}{\psi}\Bigr\|_{P^2(|\psi|^2m)}=\|h\|_{H^2}, \ \ h\in H^2,$$
 see  {\cite[Ch. III.12, VII.10]{conw}} or  {\cite[Ch. A.4.1]{nik}}.
\end{remark}

\begin{lemma}\label{lemfinit} 
Suppose that $\mathcal H$ is a Hilbert space, $T\in\mathcal L(\mathcal H)$ is an a.c. polynomially bounded operator, 
$C_{{\rm pol}, T}$ is the polynomial bound of $T$. Given $N\in\mathbb N$ and  $\{\varphi_n\}_{n=1}^N\subset H^\infty$, 
 define $$A\in\mathcal L(\mathcal H,\oplus_{n=1}^N\mathcal H), 
\ \ \ Ax=\oplus_{n=1}^N\varphi_n(T)x, \ \ x\in\mathcal H.$$ 
Then $$\|A\|\leq KC_{{\rm pol}, T}
\mathop{\text{\rm ess\,sup}}_{\zeta\in\mathbb T}
\Bigr(\sum_{n=1}^N|\varphi_n(\zeta)|^2\Bigr)^{1/2}.$$
\end{lemma}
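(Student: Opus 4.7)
The plan is to apply Corollary \ref{tha} pointwise, i.e.\ for each fixed vector $x\in\mathcal H$, and then absorb the sum over $n$ into the integrand by interchanging summation and integration.

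More concretely, fix $x\in\mathcal H$. By Corollary \ref{tha} there exists $\psi\in L^2(\mathbb T,m)$ with $\int_{\mathbb T}|\psi|^2\,\mathrm dm\le 1$ such that, simultaneously for every $\varphi\in H^\infty$,
$$\|\varphi(T)x\|^2\le K^2 C_{{\rm pol},T}^2\|x\|^2\int_{\mathbb T}|\varphi|^2|\psi|^2\,\mathrm dm.$$
The crucial point is that the \emph{same} $\psi$ (depending only on $x$) works uniformly for all $\varphi$, so I may apply this inequality to each $\varphi_n$ and sum:
$$\|Ax\|^2=\sum_{n=1}^N\|\varphi_n(T)x\|^2\le K^2 C_{{\rm pol},T}^2\|x\|^2\int_{\mathbb T}\Bigl(\sum_{n=1}^N|\varphi_n|^2\Bigr)|\psi|^2\,\mathrm dm.$$

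Next, I would bound the integrand by pulling out the essential supremum,
$$\int_{\mathbb T}\Bigl(\sum_{n=1}^N|\varphi_n|^2\Bigr)|\psi|^2\,\mathrm dm\le \Bigl(\mathop{\text{ess\,sup}}_{\zeta\in\mathbb T}\sum_{n=1}^N|\varphi_n(\zeta)|^2\Bigr)\int_{\mathbb T}|\psi|^2\,\mathrm dm\le \mathop{\text{ess\,sup}}_{\zeta\in\mathbb T}\sum_{n=1}^N|\varphi_n(\zeta)|^2,$$
using the normalization $\int|\psi|^2\,\mathrm dm\le 1$. Taking square roots yields
$$\|Ax\|\le K C_{{\rm pol},T}\|x\|\Bigl(\mathop{\text{ess\,sup}}_{\zeta\in\mathbb T}\sum_{n=1}^N|\varphi_n(\zeta)|^2\Bigr)^{1/2}$$
for every $x\in\mathcal H$, which is the desired operator-norm estimate.

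I do not foresee a real obstacle here; the only subtlety is that one must invoke Corollary \ref{tha} in the form that produces a single measure $|\psi|^2 m$ valid for all $H^\infty$ symbols at once, rather than applying it separately to each $\varphi_n$ (which would produce $N$ different measures and block the interchange of sum and integral). Since Corollary \ref{tha} already delivers exactly that uniform estimate, the argument goes through directly.
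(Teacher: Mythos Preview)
Your proof is correct and is essentially identical to the paper's own argument: fix $x$, apply Corollary~\ref{tha} to obtain a single $\psi=\psi_x$ valid for all $\varphi\in H^\infty$, sum the resulting estimates over $n$, pull out the essential supremum, and use $\int_{\mathbb T}|\psi|^2\,\mathrm dm\le1$. Your remark that the same $\psi$ must work for all $\varphi_n$ is exactly the point the paper exploits.
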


\begin{proof}
 For $x\in\mathcal H$, let $\psi_x\in L^2(\mathbb T)$ be the function from Corollary \ref{tha} (applied to $T$). 
We have \begin{align*}\|Ax\|^2&=\sum_{n=1}^N\|\varphi_n(T)x\|^2\\&
\leq\sum_{n=1}^NK^2 C_{{\rm pol}, T}^2\|x\|^2\int_{\mathbb T}|\varphi_n|^2|\psi_x|^2\text{\rm d}m\\&
= K^2 C_{{\rm pol}, T}^2\|x\|^2\int_{\mathbb T}\sum_{n=1}^N|\varphi_n|^2|\psi_x|^2\text{\rm d}m\\&
\leq K^2 C_{{\rm pol}, T}^2\|x\|^2\mathop{\text{\rm ess\,sup}}_{\mathbb T}\sum_{n=1}^N|\varphi_n|^2\int_{\mathbb T}|\psi_x|^2\text{\rm d}m\\&
\leq \Bigl(K^2 C_{{\rm pol}, T}^2\mathop{\text{\rm ess\,sup}}_{\mathbb T}\sum_{n=1}^N|\varphi_n|^2\Bigr)\cdot\|x\|^2. \qedhere
\end{align*}
\end{proof}

\begin{theorem}\label{thinfinit} Suppose that $\mathcal H$ is a Hilbert space, 
$T\in\mathcal L(\mathcal H)$ is an a.c. polynomially bounded operator, 
$C_{{\rm pol}, T}$ is the polynomial bound of $T$. Then 
$$\Bigl\|\sum_{n\in\mathbb N}\varphi_n(T)x_n\Bigr\|\leq K C_{{\rm pol}, T}
\mathop{\text{\rm ess\,sup}}_{\zeta\in\mathbb T}
\Bigr(\sum_{n\in\mathbb N}|\varphi_n(\zeta)|^2\Bigr)^{1/2}\Bigr(\sum_{n\in\mathbb N}\|x_n\|^2\Bigr)^{1/2}$$
for every $ \{\varphi_n\}_{n\in\mathbb N}\subset H^\infty$ and every $ \{x_n\}_{n\in\mathbb N}\subset \mathcal H$ 
such that the right part of the above inequality is finite. 
\end{theorem}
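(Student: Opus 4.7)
The plan is to reduce the infinite-sum estimate to the finite-sum estimate from Lemma~\ref{lemfinit} via a Cauchy sequence argument, and to obtain the finite-sum bound by a duality trick involving $T^\ast$.

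First, I would prove the finite analogue: for $\{\varphi_n\}_{n=1}^N\subset H^\infty$ and $\{x_n\}_{n=1}^N\subset\mathcal H$,
\[\Bigl\|\sum_{n=1}^N\varphi_n(T)x_n\Bigr\|\le KC_{{\rm pol},T}\mathop{\text{\rm ess\,sup}}_{\mathbb T}\Bigl(\sum_{n=1}^N|\varphi_n|^2\Bigr)^{1/2}\Bigl(\sum_{n=1}^N\|x_n\|^2\Bigr)^{1/2}.\]
Test the left-hand side against an arbitrary $y\in\mathcal H$ with $\|y\|=1$; then
\[\Bigl\langle\sum_n\varphi_n(T)x_n,y\Bigr\rangle=\sum_n\langle x_n,\varphi_n(T)^\ast y\rangle,\]
and Cauchy--Schwarz in $\ell^2$ gives an upper bound of $\bigl(\sum_n\|x_n\|^2\bigr)^{1/2}\bigl(\sum_n\|\varphi_n(T)^\ast y\|^2\bigr)^{1/2}$. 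The second factor is the norm of $\oplus_n\varphi_n(T)^\ast y=\oplus_n\widetilde\varphi_n(T^\ast)y$ in $\oplus_n\mathcal H$, where $\widetilde\varphi_n(z):=\overline{\varphi_n(\bar z)}\in H^\infty$, and this is precisely the quantity controlled by Lemma~\ref{lemfinit} applied to $T^\ast$ and the family $\{\widetilde\varphi_n\}$.

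The key verification is that $T^\ast$ is itself an a.c.\ polynomially bounded operator with $C_{{\rm pol},T^\ast}=C_{{\rm pol},T}$. Polynomial boundedness is immediate from $\|p(T^\ast)\|=\|\widetilde p(T)\|\le C_{{\rm pol},T}\|p\|_\infty$ (using $\|\widetilde p\|_\infty=\|p\|_\infty$), and the $H^\infty$-calculus for $T$ transfers to one for $T^\ast$ via $\varphi\mapsto\widetilde\varphi(T)^\ast$, so $T^\ast$ is a.c.\ by the characterization of absolute continuity as admitting an $H^\infty$-calculus cited in the introduction. Since $|\widetilde\varphi_n(\zeta)|=|\varphi_n(\bar\zeta)|$ on $\mathbb T$ and $m$ is invariant under complex conjugation, the essential supremum of $\sum|\widetilde\varphi_n|^2$ equals that of $\sum|\varphi_n|^2$. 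Inserting the Lemma~\ref{lemfinit} bound for $T^\ast$ and taking the supremum over $\|y\|=1$ yields the finite version.

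For the infinite case, set $S_M=\sum_{n=1}^M\varphi_n(T)x_n$. The finite estimate applied to the tail gives, for $M<M'$,
\[\|S_{M'}-S_M\|\le KC_{{\rm pol},T}\mathop{\text{\rm ess\,sup}}_{\mathbb T}\Bigl(\sum_{n\in\mathbb N}|\varphi_n|^2\Bigr)^{1/2}\Bigl(\sum_{n=M+1}^{M'}\|x_n\|^2\Bigr)^{1/2},\]
so $(S_M)$ is Cauchy in $\mathcal H$ once the hypothesis of the theorem holds; passing to the limit in the finite estimate delivers the claim. The only real technical point is the transfer of a.c.\ polynomial boundedness to $T^\ast$; all other steps are routine duality and a limiting argument.
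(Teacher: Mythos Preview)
Your proposal is correct and follows essentially the same route as the paper. The paper applies Lemma~\ref{lemfinit} to $T^\ast$ and $\{\widetilde\varphi_n\}_{n=1}^N$, then uses $\|A^\ast\|=\|A\|$ together with $\varphi(T)^\ast=\widetilde\varphi(T^\ast)$ to write $\sum_{n=1}^N\varphi_n(T)x_n=A^\ast(\oplus_n x_n)$ and obtain the finite bound, after which the infinite case follows by convergence of the partial sums; your duality argument via pairing with a unit vector $y$ is just the same computation of $\|A^\ast\|$ unwound, and your explicit verification that $T^\ast$ is a.c.\ with $C_{{\rm pol},T^\ast}=C_{{\rm pol},T}$ makes precise what the paper invokes by citation.
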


\begin{proof} Let $\{\varphi_n\}_{n\in\mathbb N}\subset H^\infty$ be such that 
$$\mathop{\text{\rm ess\,sup}}_{\zeta\in\mathbb T}\sum_{n\in\mathbb N}|\varphi_n(\zeta)|^2:=a<\infty.$$
Let $N\in\mathbb N$. Let $A$ be the transformation from Lemma \ref{lemfinit} applied to  
$T^\ast$ and $\{\widetilde{\varphi_n}\}_{n=1}^N$, 
where $\widetilde{\varphi}(\zeta)=\overline{\varphi(\overline\zeta)}$ ($\zeta \in\mathbb D$, $\varphi\in H^\infty$). 
Taking into account that $\|A\|=\|A^\ast\|$ and $\varphi(T)^\ast=\widetilde{\varphi}(T^\ast)$ 
({\cite[Proposition 24]{ker16}} or \cite{mlak}), 
we obtain that 
\begin{align*}\Bigl\|\sum_{n=1}^N\varphi_n(T)x_n\Bigr\|&=\|A^\ast\bigl(\oplus_{n=1}^Nx_n\bigr)\| \\ &
\leq K C_{{\rm pol}, T}\mathop{\text{\rm ess\,sup}}_{\zeta\in\mathbb T}
\Bigr(\sum_{n=1}^N|\varphi_n(\zeta)|^2\Bigr)^{1/2}\Bigr(\sum_{n=1}^N\|x_n\|^2\Bigr)^{1/2}
\\&
\leq K C_{{\rm pol}, T}a^{1/2}\Bigr(\sum_{n=1}^N\|x_n\|^2\Bigr)^{1/2}.\end{align*}
Therefore, if $\sum_{n\in\mathbb N}\|x_n\|^2<\infty$, then $\sum_{n\in\mathbb N}\varphi_n(T)x_n$ converges, and the theorem follows. 
 \end{proof}

\begin{lemma}\label{lemmin}  
Suppose that $\mathcal H$ is a Hilbert space, $T\in\mathcal L(\mathcal H)$ is an a.c. polynomially bounded operator, 
$x\in\mathcal H$,  $\psi_k\in L^2(\mathbb T,m)$, $k=1,2$, are  
such that $\int_{\mathbb T}\log|\psi_k|\text{\rm d}m>-\infty$, and 
$$\|\varphi(T)x\|\leq \Bigl(\int_{\mathbb T}|\varphi|^2|\psi_k|^2\text{\rm d}m\Bigr)^{1/2} \ \ \text{ for every }
\varphi\in H^\infty, \ \ \ k=1,2.$$
Set $|\psi(\zeta)|=\min(|\psi_1(\zeta)|,|\psi_2(\zeta)|)$, $\zeta\in\mathbb T$.
Then for every $0<\varepsilon<1$ there exists an inner function $\omega\in H^\infty$ such that 
$$\|\omega(T)\varphi(T)x\|\leq \frac{(2(1+\varepsilon^2))^{1/2}}{1-\varepsilon}
\Bigl(\int_{\mathbb T}|\varphi|^2|\psi|^2\text{\rm d}m\Bigr)^{1/2} 
\ \ \text{ for every } \ \varphi\in H^\infty.$$
\end{lemma}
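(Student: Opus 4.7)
The plan is to reduce the estimate to an analytic factorisation problem in $H^\infty$ and then combine the two hypotheses by the triangle inequality. First take the outer representatives of $\psi_1,\psi_2$ and let $\psi$ be the outer function with $|\psi|=\min(|\psi_1|,|\psi_2|)$; it exists because $\log\min(|\psi_1|,|\psi_2|)\geq -|\log|\psi_1||-|\log|\psi_2||$ is integrable. Setting $\alpha_k=\psi/\psi_k$, each $\alpha_k$ lies in $H^\infty$, is outer, satisfies $|\alpha_k|\leq 1$, and $|\alpha_k\psi_k|=|\psi|$ a.e. Splitting $\mathbb T$ into $\sigma=\{|\psi_1|\leq|\psi_2|\}$ and its complement one checks $|\alpha_1|^2+|\alpha_2|^2\geq 1$ a.e.\ on $\mathbb T$ (on $\sigma$ already $|\alpha_1|=1$).

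The technical heart of the proof is to produce, for the given $\varepsilon\in(0,1)$, an inner function $\omega\in H^\infty$ and functions $h_1,h_2\in H^\infty$ with
\[
h_1\alpha_1+h_2\alpha_2=\omega\qquad\text{and}\qquad \bigl\||h_1|^2+|h_2|^2\bigr\|_\infty\leq\frac{1+\varepsilon^2}{(1-\varepsilon)^2}.
\]
The formal pointwise ansatz $h_k=\overline{\alpha_k}\,\omega/(|\alpha_1|^2+|\alpha_2|^2)$ gives $\sum|h_k|^2=1/(|\alpha_1|^2+|\alpha_2|^2)\leq 1$ but is not analytic. I would analytise by passing to the outer function $u\in H^\infty$ with $|u|^2=|\alpha_1|^2+|\alpha_2|^2+\varepsilon^2\geq 1+\varepsilon^2$, so that $(\alpha_1/u,\alpha_2/u,\varepsilon/u)$ is an $H^\infty$ triple with square modulus summing to $1$ on $\mathbb T$, and then apply a Toeplitz corona / commutant lifting construction (or an explicit inner-outer factorisation of this co-inner triple) to manufacture the desired $h_1,h_2$ and inner $\omega$. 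The $(1+\varepsilon^2)$ traces back to $|u|^2\leq |\alpha_1|^2+|\alpha_2|^2+\varepsilon^2$, while $(1-\varepsilon)^{-1}$ appears when isolating the inner part of an $H^\infty$ function that lies within $\varepsilon$ of a constant.

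With $\omega,h_1,h_2$ in hand the computation is short: writing $(\omega\varphi)(T)x=(h_1\alpha_1\varphi)(T)x+(h_2\alpha_2\varphi)(T)x$, applying the triangle inequality, invoking the hypothesis for each admissible multiplier $h_k\alpha_k\varphi\in H^\infty$, and using $|\alpha_k\psi_k|=|\psi|$ yields
\[
\|\omega(T)\varphi(T)x\|\leq\Bigl(\int|h_1|^2|\varphi|^2|\psi|^2\,dm\Bigr)^{1/2}+\Bigl(\int|h_2|^2|\varphi|^2|\psi|^2\,dm\Bigr)^{1/2}.
\]
The elementary estimate $(a+b)^2\leq 2(a^2+b^2)$ combined with the pointwise bound on $|h_1|^2+|h_2|^2$ then produces exactly the constant $\sqrt{2(1+\varepsilon^2)}/(1-\varepsilon)$.

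The main obstacle is the construction in the second paragraph: obtaining the sharp $\varepsilon$-perturbed bound on $\||h_1|^2+|h_2|^2\|_\infty$. A naive corona solution depends on Carleson's constant and is far too weak, so the special structure $|\alpha_1|^2+|\alpha_2|^2\geq 1$ together with the $\varepsilon$-perturbation must be exploited in an essential way. The role of the inner factor $\omega$ — which cannot simply be taken to be $1$ in this setup — is precisely to absorb the failure of the formal pointwise solution $\overline{\alpha_k}/(|\alpha_1|^2+|\alpha_2|^2)$ to be analytic.
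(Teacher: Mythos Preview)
Your overall framing matches the paper's: split $\omega\varphi$ as $(h_1\alpha_1)\varphi + (h_2\alpha_2)\varphi$, apply the two hypotheses separately, and combine. The gap you yourself identify --- producing $h_1,h_2\in H^\infty$ and inner $\omega$ with $h_1\alpha_1+h_2\alpha_2=\omega$ and $\bigl\||h_1|^2+|h_2|^2\bigr\|_\infty\le (1+\varepsilon^2)/(1-\varepsilon)^2$ --- is the entire content of the lemma, and your corona/commutant-lifting sketch does not actually deliver this constant. So as written the proof is incomplete at its only nontrivial step.

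The paper closes the gap by a completely elementary construction that uses no corona machinery. Write $\mathbb T=\tau_1\cup\tau_2$ (disjoint) with $|\psi|=|\psi_k|$ on $\tau_k$, and define outer $\eta_k\in H^\infty$ by
\[
|\eta_1|=\begin{cases}1&\text{on }\tau_1,\\ \varepsilon\,|\psi_2|/|\psi_1|&\text{on }\tau_2,\end{cases}
\qquad
|\eta_2|=\begin{cases}\varepsilon\,|\psi_1|/|\psi_2|&\text{on }\tau_1,\\ 1&\text{on }\tau_2.\end{cases}
\]
Then $1-\varepsilon\le|\eta_1+\eta_2|\le1+\varepsilon$ a.e., so the inner--outer factorisation $\eta_1+\eta_2=\omega\eta$ has $|\eta|\ge 1-\varepsilon$. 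In your notation this gives $h_k:=\eta_k/(\eta\alpha_k)\in H^\infty$ (quotient of outer functions with bounded modulus), $h_1\alpha_1+h_2\alpha_2=\omega$, and a direct check on each $\tau_k$ yields $|h_1|^2+|h_2|^2=(1+\varepsilon^2)/|\eta|^2\le(1+\varepsilon^2)/(1-\varepsilon)^2$. The paper in fact bypasses the $h_k$ and computes directly via $\omega(T)\varphi(T)x=(\eta_1+\eta_2)(T)(\varphi/\eta)(T)x$, but this is the same argument.

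So the missing idea is simply: take the obvious choice $h_k=1$ on the ``good'' half $\tau_k$, perturb it to $\varepsilon$ (not $0$) on the other half to stay in $H^\infty$, and let $\omega$ be the inner part of the sum. No Toeplitz corona step is needed.
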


\begin{proof} We have $\mathbb T=\tau_1\cup\tau_2$, where $\tau_1\cap\tau_2=\emptyset$ and $|\psi|=|\psi_k|$ a.e. on $\tau_k$, 
 $k=1,2$. There exist outer functions $\eta_k\in H^\infty$,  $k=1,2$, such that 
$$|\eta_1|=\begin{cases}1  & \text{ on } \tau_1, \\
\varepsilon\frac{|\psi_2|}{|\psi_1|} & \text{ on } \tau_ 2,\end{cases} \ \ \  \ \text{ and } \ \ \ \  
|\eta_2|=\begin{cases}\varepsilon\frac{|\psi_1|}{|\psi_2|} & \text{ on } \tau_1, \\ 1  & \text{ on } \tau_2.
\end{cases}$$
Clearly, $$1-\varepsilon\leq|\eta_1+\eta_2|\leq 1+\varepsilon \ \  \ \text{ a.e. on }\ \mathbb T.$$
Set $\omega\eta=\eta_1+\eta_2$, where $\omega$, $\eta\in H^\infty$, $\omega$ is inner, and $\eta$ is outer.
Let $\varphi\in H^\infty$. We have 
\begin{align*} \|\omega(T)\varphi(T)x\|^2 &=\Bigl\|\omega(T)\eta(T)\Bigl(\frac{\varphi}{\eta}\Bigr)(T)x\Bigr\|^2=
\Bigl\|(\eta_1+\eta_2)(T)\Bigl(\frac{\varphi}{\eta}\Bigr)(T)x\Bigr\|^2  \\& 
\leq 2\Bigl(\Bigl\|\eta_1(T)\Bigl(\frac{\varphi}{\eta}\Bigr)(T)x\Bigr\|^2 + 
\Bigl\|\eta_2(T)\Bigl(\frac{\varphi}{\eta}\Bigr)(T)x\Bigr\|^2 \Bigr) \\& 
\leq 2\Bigl(\int_{\mathbb T}|\eta_1|^2\Bigl|\frac{\varphi}{\eta}\Bigr|^2|\psi_1|^2\text{\rm d}m 
+ \int_{\mathbb T}|\eta_2|^2\Bigl|\frac{\varphi}{\eta}\Bigr|^2|\psi_2|^2\text{\rm d}m \Bigr)\\& 
\leq \frac{2}{(1-\varepsilon)^2}
\Bigl(\int_{\mathbb T}|\eta_1|^2|\varphi|^2|\psi_1|^2\text{\rm d}m 
+ \int_{\mathbb T}|\eta_2|^2|\varphi|^2|\psi_2|^2\text{\rm d}m \Bigr)\\&
= \frac{2}{(1-\varepsilon)^2}
\Bigl(\int_{\tau_1}|\varphi|^2|\psi|^2\text{\rm d}m +\varepsilon^2\int_{\tau_2}|\varphi|^2|\psi|^2\text{\rm d}m \\&
\ \  \ \ 
+ \varepsilon^2\int_{\tau_1}|\varphi|^2|\psi|^2\text{\rm d}m +\int_{\tau_2}|\varphi|^2|\psi|^2\text{\rm d}m \Bigr)\\&
=\frac{2(1+\varepsilon^2)}{(1-\varepsilon)^2}\int_{\mathbb T}|\varphi|^2|\psi|^2\text{\rm d}m.\qedhere\end{align*}
\end{proof}

\begin{remark}Applying Theorem \ref{thinfinit}, it is possible to prove an analog of Lemma \ref{lemmin} for a finite 
family $\{\psi_k\}_{k=1}^N$ with a constant  not depended on $N$, and for a countable family 
 $\{\psi_k\}_{k\in\mathbb N}$ under additional assumption $\int_{\mathbb T}\log|\psi|\text{\rm d}m>-\infty$,  
where $|\psi(\zeta)|=\inf_{k\in\mathbb N}|\psi_k(\zeta)|$, $\zeta\in\mathbb T$. We do not prove these statements, because we will not apply them.
\end{remark}

\section{Preliminaries: function theory}

\begin{lemma}\label{lempsit} Suppose that $\psi\in H^2$ is an outer function. For $t>0$ let $\psi_t$ be an outer function such that 
$|\psi_t|=\max(|\psi|, t)$. Then $$\Bigl\|\frac{\psi}{\psi_t}-1\Bigl\|_{H^2}\to 0 \ \ \text{ when } t\to 0.$$ 
\end{lemma}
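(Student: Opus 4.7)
The plan is to fix the unimodular ambiguity in $\psi_t$ so that the quotient $\psi/\psi_t$ is outer with $(\psi/\psi_t)(0)>0$, and then to exploit the expansion
\begin{equation*}
\Bigl\|\frac{\psi}{\psi_t}-1\Bigr\|_{H^2}^2=\int_{\mathbb T}\bigl|\psi/\psi_t\bigr|^2\,dm-2\,(\psi/\psi_t)(0)+1,
\end{equation*}
which uses $\int_{\mathbb T}f\,dm=f(0)$ for $f\in H^1$. Note that $\psi/\psi_t\in H^\infty$ with $|\psi/\psi_t|\le 1$ a.e.\ since $|\psi_t|=\max(|\psi|,t)\ge|\psi|$, and that this quotient is outer as a ratio of outer functions: any inner factor of $\psi/\psi_t$ would also be an inner factor of $\psi=\psi_t\cdot(\psi/\psi_t)$, which is trivial.

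First I would handle the first term in the expansion. Since $\psi$ is outer, $|\psi|>0$ a.e.\ on $\mathbb T$, so for a.e.\ $\zeta$ one has $|\psi_t(\zeta)|=|\psi(\zeta)|$ as soon as $t<|\psi(\zeta)|$; hence $|\psi/\psi_t|^2\to 1$ a.e., and bounded convergence gives $\int_{\mathbb T}|\psi/\psi_t|^2\,dm\to 1$. For the middle term I would use the outer-function identity $\log(\psi/\psi_t)(0)=\int_{\mathbb T}\log|\psi/\psi_t|\,dm$, which is available because $\psi/\psi_t$ is outer with positive value at $0$. The integrand tends to $0$ a.e., and for $0<t\le 1$ it satisfies
\begin{equation*}
\bigl|\log|\psi/\psi_t|\bigr|=\log(|\psi_t|/|\psi|)\le\bigl|\log|\psi|\bigr|,
\end{equation*}
as one sees by a case analysis: on $\{|\psi|\ge t\}$ the left side vanishes, while on $\{|\psi|<t\le 1\}$ one has $\log|\psi|<0$ and $\log t-\log|\psi|\le-\log|\psi|=\bigl|\log|\psi|\bigr|$. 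Since $\psi\in H^2$ is outer, $\log|\psi|\in L^1(\mathbb T)$, so dominated convergence yields $(\psi/\psi_t)(0)\to 1$, and substituting into the expansion gives $\|\psi/\psi_t-1\|_{H^2}^2\to 0$.

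The main (mild) obstacle I anticipate is the normalization: the outer function with prescribed modulus is determined only up to a unimodular constant, and the conclusion is only sensible once $\psi_t$ is chosen so that $\psi/\psi_t$ is positive at the origin (otherwise, replacing $\psi_t$ by $-\psi_t$ would send the asserted limit from $0$ to $2$). With that convention in place, the rest reduces to the expansion above combined with two applications of dominated convergence, the second proceeding through the standard outer-function formula $\log f(0)=\int_{\mathbb T}\log|f|\,dm$.
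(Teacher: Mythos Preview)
Your proof is correct and follows essentially the same approach as the paper: both show that $\int_{\mathbb T}|\psi/\psi_t|^2\,dm\to 1$ and $(\psi/\psi_t)(0)\to 1$, then combine these. The packaging differs only in minor ways---the paper uses the orthogonal split $\|f-f(0)\|^2=\|f\|^2-|f(0)|^2$ followed by the triangle inequality, whereas you expand $\|f-1\|^2$ directly; and for $(\psi/\psi_t)(0)\to 1$ the paper splits $\int_{\sigma_t}\log(|\psi|/t)\,dm$ into $(\log t)\,m(\sigma_t)$ and $\int_{\sigma_t}\log|\psi|\,dm$, whereas your single dominated-convergence argument with majorant $|\log|\psi||$ is slightly more streamlined. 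Your explicit remark about the unimodular normalization of $\psi_t$ is a point the paper leaves implicit.
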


\begin{proof} For $0<t<1$ set $\sigma_t=\{\zeta\in\mathbb T\ :\ |\psi(\zeta)|< t\}$. 
Since $|\psi|\neq 0$ a.e. on $\mathbb T$, $m(\sigma_t)\to 0$  when $ t\to 0$.
Therefore, 
\begin{equation}\label{psinorm} 1\geq \Bigl\|\frac{\psi}{\psi_t}\Bigr\|_{H^2}^2\geq m(\mathbb T\setminus\sigma_t)\to 1 \ \ \text{ when } t\to 0.\end{equation} 
Furthermore, 
$$ 0\leq(-\log t)m(\sigma_t)\leq - \int_{\sigma_t}\log|\psi|\text{\rm d}m \to 0\ \ \text{ when } t\to 0,$$
because  $m(\sigma_t)\to 0$ and  $\int_{\mathbb T}\log|\psi|\text{\rm d}m>-\infty$.
Therefore, 
$$ \int_{\sigma_t}\log\frac{|\psi|}{t}\text{\rm d}m \to 0 \ \ \ \text{ when } t\to 0.$$
 Consequently, 
\begin{equation}\label{psiz} \frac{\psi}{\psi_t}(0)=
\exp\Bigl( \int_{\sigma_t}\log\frac{|\psi|}{t}\text{\rm d}m\Bigr)  \to 1
 \ \ \text{ when } t\to 0.
\end{equation}
By \eqref{psinorm} and \eqref{psiz}, 
 \begin{equation}\label{psito}\Bigl\|\frac{\psi}{\psi_t} - \frac{\psi}{\psi_t}(0)\Bigr\|_{H^2}^2 = 
\Bigl\|\frac{\psi}{\psi_t}\Bigr\|_{H^2}^2-\Bigl|\frac{\psi}{\psi_t}(0)\Bigr|^2 \to 0
 \ \ \text{ when } t\to 0.\end{equation} 
By \eqref{psiz} and \eqref{psito}, 
$$\Bigl\|\frac{\psi}{\psi_t}-1\Bigl\|_{H^2}\leq \Bigl\|\frac{\psi}{\psi_t} - \frac{\psi}{\psi_t}(0)\Bigr\|_{H^2} + 
\Bigl|\frac{\psi}{\psi_t}(0)-1\Bigr| \to 0 \ \ \text{ when } t\to 0.\qedhere$$ 
 \end{proof}

\begin{proposition}\label{propalphasigma}
Suppose that  $\sigma\subset\mathbb T$ is a measurable set, $\{\tau_n\}_{n\in\mathbb N}$ is a family of  measurable sets 
such that $\sigma=\cup_{n\in\mathbb N}\tau_n$ and $\tau_n\cap\tau_k=\emptyset$, if $n\neq k$. Suppose that $C>0$ and 
$\{\xi_n\}_{n\in\mathbb N}$ is a family of positive numbers such that $\sum_{n\in\mathbb N}\xi_n^2<\infty$. 
Suppose that $\{\eta_n\}_{n\in\mathbb N}$,  $\{\varphi_n\}_{n\in\mathbb N}$, and  $\{\psi_n\}_{n\in\mathbb N}$ are families 
 of  functions from $L^2(\sigma)$ such that 
\begin{equation*}\label{etasigma} |\eta_n|\leq\begin{cases}1  & \text{ on } \tau_n, \\
\xi_n & \text{ on } \sigma\setminus\tau_ n,\end{cases}\end{equation*} 
and $|\varphi_n|\leq C|\psi_n|$ a.e. on $\sigma$. For every $ t>0$ and $n\in\mathbb N$ let $\psi_{nt}\in L^2(\sigma)$ 
be such that $|\psi_n|\leq|\psi_{nt}|$ a.e. on $\sigma$ and 
$$\Bigl\|\frac{\psi_n}{\psi_{nt}}-1\Bigr\|_{L^2(\sigma)}\to 0 \ \ \text{ when } t\to 0 \ \ \text{ for every } n\in\mathbb N.$$ 
Set $$\alpha_t=\sum_{n\in\mathbb N}\eta_n^2\frac{\varphi_n}{\psi_{nt}} \ \ \text{ and } \ \ 
\alpha=\sum_{n\in\mathbb N}\eta_n^2\frac{\varphi_n}{\psi_n}.$$
Then $\sup_{t>0}\|\alpha_t\|_{L^\infty(\sigma)}<\infty$ and $\|\alpha_t-\alpha\|_{L^2(\sigma)}\to 0$ when $t\to 0$.
\end{proposition}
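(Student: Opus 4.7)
The plan is to establish the $L^\infty$ bound first, and then use dominated convergence twice (once on $\mathbb{T}$ and once on the counting measure over $\mathbb{N}$) to get the $L^2$ convergence.

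For the uniform $L^\infty$ bound, I would work pointwise on each piece $\tau_k$ of the partition. Because $|\psi_n|\leq|\psi_{nt}|$ and $|\varphi_n|\leq C|\psi_n|$, we have $|\varphi_n/\psi_{nt}|\leq C$ everywhere on $\sigma$. On $\tau_k$ the hypothesis on $\eta_n$ gives $|\eta_k|\leq 1$ and $|\eta_n|\leq\xi_n$ for $n\neq k$, so termwise $|\eta_n^2\varphi_n/\psi_{nt}|\leq C$ if $n=k$ and $\leq C\xi_n^2$ otherwise. Summing yields $|\alpha_t|\leq C(1+\sum_n\xi_n^2)$ on every $\tau_k$, hence on $\sigma$, uniformly in $t$; the same bound applies to $\alpha$.

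For the $L^2$ convergence, I would write $\gamma_{nt}=\psi_n/\psi_{nt}-1$ and rewrite
\[
\alpha_t-\alpha=\sum_{n\in\mathbb{N}}\eta_n^2\frac{\varphi_n}{\psi_n}\gamma_{nt},
\]
noting that $|\gamma_{nt}|\leq 2$ pointwise because $|\psi_n/\psi_{nt}|\leq 1$. On each $\tau_k$ I would split the sum into the diagonal term $n=k$ and the tail $n\neq k$. For the tail, Cauchy--Schwarz with weights $\xi_n^2$ gives
\[
\Bigl|\sum_{n\neq k}\eta_n^2\frac{\varphi_n}{\psi_n}\gamma_{nt}\Bigr|^2\leq C^2\Bigl(\sum_m\xi_m^2\Bigr)\sum_{n\neq k}\xi_n^2|\gamma_{nt}|^2,
\]
and after integrating over $\tau_k$ and summing over $k$, Fubini replaces $\sum_{k\neq n}\int_{\tau_k}|\gamma_{nt}|^2$ by $\|\gamma_{nt}\|_{L^2(\sigma)}^2$. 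Thus the tail contribution is bounded by a constant times $\sum_n\xi_n^2\|\gamma_{nt}\|_{L^2(\sigma)}^2$. Since $\|\gamma_{nt}\|_{L^2(\sigma)}\to 0$ for every fixed $n$ by hypothesis, and $\xi_n^2\|\gamma_{nt}\|_{L^2(\sigma)}^2\leq 4\xi_n^2$ which is summable, dominated convergence on the counting measure forces this to tend to $0$ as $t\to 0$.

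For the diagonal part, the contribution is $\sum_k\int_{\tau_k}|\eta_k^2(\varphi_k/\psi_k)\gamma_{kt}|^2\text{\rm d}m\leq C^2\sum_k\int_{\tau_k}|\gamma_{kt}|^2\text{\rm d}m$. Here the key observation is that the dominating function $4m(\tau_k)$ has summable total mass $4m(\sigma)$ over $k$, while for each fixed $k$ the term $\int_{\tau_k}|\gamma_{kt}|^2\text{\rm d}m\leq\|\gamma_{kt}\|_{L^2(\sigma)}^2\to 0$. A second dominated convergence argument on the counting measure finishes the estimate. The main thing to be careful about is the bookkeeping of diagonal versus off-diagonal terms; the whole proof rests on the pointwise bound $|\gamma_{nt}|\leq 2$, which is what makes the dominating functions summable and lets both pieces be handled by elementary dominated convergence rather than requiring any deeper tool.
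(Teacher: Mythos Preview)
Your proof is correct but organized differently from the paper's. The paper handles the $L^2$ convergence by an explicit $\varepsilon$--$N$ truncation: given $\varepsilon>0$, choose $N$ with $\sum_{n>N}\xi_n^2<\varepsilon$ and $m(\bigcup_{n>N}\tau_n)<\varepsilon$; on $\bigcup_{n>N}\tau_n$ the uniform pointwise bound $|\alpha_t-\alpha|\leq 2C(1+\sum_k\xi_k^2)$ makes the integral small, while on each $\tau_n$ with $n\leq N$ only the finitely many $\gamma_{kt}=\psi_k/\psi_{kt}-1$ with $k\leq N$ contribute nontrivially, and these are made small in $L^2$ by choosing $t$ small. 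You instead keep the whole sum, split into diagonal and off-diagonal pieces on each $\tau_k$, apply Cauchy--Schwarz with weights $\xi_n^2$ to the off-diagonal part, and finish with dominated convergence on the counting measure using the summable dominants $4\xi_n^2$ (off-diagonal) and $4m(\tau_k)$ (diagonal). Your route is a bit slicker and avoids the $\varepsilon$--$N$ bookkeeping; the paper's argument is slightly more elementary in that it needs no appeal to dominated convergence and no Cauchy--Schwarz, only direct pointwise estimates. One small point to make explicit: when passing from $|\alpha_t-\alpha|^2$ on $\tau_k$ to the separate diagonal and off-diagonal contributions you are implicitly using $|D_k+O_k|^2\leq 2|D_k|^2+2|O_k|^2$; this is harmless but worth stating.
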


\begin{proof} Clearly, $|\frac{\psi_n}{\psi_{nt}}-1|\leq 2$ a.e. on $\sigma$. Therefore, 
$$ |\alpha_t-\alpha|\leq\sum_{n\in\mathbb N}|\eta_n|^2\Bigl|\frac{\varphi_n}{\psi_n}\Bigr|
\Bigl|\frac{\psi_n}{\psi_{nt}}-1\Bigr|\leq 2C\Bigl(1+\sum_{k\neq n}\xi_k^2\Bigr)\text{ on } \tau_n.$$
Take $\varepsilon >0$. There exists $N\in\mathbb N$ such that 
$$ \sum_{n\geq N+1}\xi_n^2<\varepsilon \ \ \text{ and } \ \ m\Bigl(\bigcup_{n\geq N+1}\tau_n\Bigr)<\varepsilon.$$
We have $$\int_{\bigcup_{n\geq N+1}\tau_n}|\alpha_t-\alpha|^2\text{\rm d}m
\leq  4C^2\Bigl(1+\sum_{n\in\mathbb N}\xi_n^2\Bigr)^2\varepsilon.$$
Let $1\leq n\leq N$. We have 
$$  |\alpha_t-\alpha|  \leq C\Bigl(\Bigl|\frac{\psi_n}{\psi_{nt}}-1\Bigr| 
+ \sum_{k\neq n,k\leq N}\xi_k^2\Bigl|\frac{\psi_k}{\psi_{kt}}-1\Bigr| +  2\sum_{n\geq N+1}\xi_n^2\Bigr)
 \ \text{ on } \tau_n.$$
Therefore, 
\begin{align*}  |\alpha_t&-\alpha|^2   \leq  C^2\Bigl(2N\Bigl(\Bigl|\frac{\psi_n}{\psi_{nt}}-1\Bigr|^2 
+ \sum_{k\neq n,k\leq N}\xi_k^4\Bigl|\frac{\psi_k}{\psi_{kt}}-1\Bigr|^2\Bigr) + 8\varepsilon^2\Bigr) \\ &\leq 
2N C^2\max\bigl(1,\sup_{k\in\mathbb N}\xi_k^4\bigr)\sum_{k=1}^N\Bigl|\frac{\psi_k}{\psi_{kt}}-1\Bigr|^2 + 8C^2\varepsilon^2\ \text{ on } \bigcup_{n=1}^N\tau_n.
\end{align*}
There exists $t_\varepsilon>0$ such that  
$$\int_\sigma\Bigl|\frac{\psi_n}{\psi_{nt}}-1\Bigr|^2\text{\rm d}m \leq
\frac{\varepsilon}{2N^2 C^2\max(1,\sup_{k\in\mathbb N}\xi_k^4)}$$
 for every $ 0<t<t_\varepsilon $  and $ n=1, \ldots, N.$
Therefore,
$$\int_{\bigcup_{n=1}^N\tau_n}|\alpha_t-\alpha|^2\text{\rm d}m \leq\varepsilon+8C^2\varepsilon^2 
\ \ \text{ for every } 0<t<t_\varepsilon.\qedhere$$
\end{proof}

\section{Preliminaries: operators intertwined with unitaries}

\begin{lemma}\label{lemfpsi} Suppose that $T\in\mathcal L(\mathcal H)$ is an a.c. polynomially bounded operator, 
$C_{{\rm pol}, T}$ is the polynomial bound of $T$,  $x\in\mathcal H$, and $\psi\in L^2(\mathbb T)$ 
is from Corollary \ref{tha}. Furthermore, suppose that  $\sigma\subset\mathbb T$ is a measurable set, $X\in\mathcal L(\mathcal H,L^2(\sigma))$,  $XT=U_\sigma X$, 
and $f=Xx$. Then 
$$|f|\leq \|X\|KC_{{\rm pol}, T}\|x\||\psi| \ \ \text{ a.e. on }\ \sigma.$$\end{lemma}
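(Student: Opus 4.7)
The plan is to translate the intertwining $XT=U_\sigma X$ into a multiplication identity in $L^2(\sigma)$, combine it with the weighted $L^2$-estimate on $\varphi(T)x$ supplied by Corollary \ref{tha}, and then convert the resulting $L^2$-inequality into the desired pointwise bound on $f$ by testing against normalized Cauchy kernels.

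For the first step, since $T$ is a.c.\ polynomially bounded and $U_\sigma$ is an a.c.\ unitary, the relation $XT=U_\sigma X$ extends from polynomials to the $H^\infty$-functional calculus, giving $X\varphi(T)=\varphi(U_\sigma)X$ for every $\varphi\in H^\infty$ (by the same Abel-dilation argument $\varphi_r\to\varphi$ used at the end of the proof of Corollary \ref{tha}). Since $\varphi(U_\sigma)$ is multiplication by $\varphi|_\sigma$ on $L^2(\sigma)$, evaluating at $x$ yields $X\varphi(T)x=\varphi f$ in $L^2(\sigma)$. Setting $C:=\|X\|KC_{{\rm pol},T}\|x\|$ and combining with Corollary \ref{tha},
$$\Bigl(\int_\sigma|\varphi|^2|f|^2\,{\rm d}m\Bigr)^{1/2}=\|X\varphi(T)x\|_{L^2(\sigma)}\le\|X\|\cdot\|\varphi(T)x\|\le C\Bigl(\int_{\mathbb T}|\varphi|^2|\psi|^2\,{\rm d}m\Bigr)^{1/2}$$
for every $\varphi\in H^\infty$. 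Extending $f$ by zero off $\sigma$, this becomes the weighted inequality
$$\int_{\mathbb T}|\varphi|^2|f|^2\chi_\sigma\,{\rm d}m\le C^2\int_{\mathbb T}|\varphi|^2|\psi|^2\,{\rm d}m,\qquad\varphi\in H^\infty.$$

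To extract a pointwise estimate, I would plug in the normalized Cauchy kernels $\varphi_{z_0}(z)=\sqrt{1-|z_0|^2}\,/(1-\overline{z_0}z)$ for $z_0\in\mathbb D$, which lie in $H^\infty$ and whose boundary modulus satisfies $|\varphi_{z_0}|^2=P_{z_0}$, the Poisson kernel at $z_0$. The inequality then reduces to $P[|f|^2\chi_\sigma](z_0)\le C^2\,P[|\psi|^2](z_0)$ for every $z_0\in\mathbb D$, where $P[g]$ denotes the Poisson integral of $g\in L^1(\mathbb T)$. Since $|f|^2\chi_\sigma$ and $|\psi|^2$ both lie in $L^1(\mathbb T)$, Fatou's theorem on non-tangential limits of Poisson integrals yields $|f(\zeta_0)|^2\chi_\sigma(\zeta_0)\le C^2|\psi(\zeta_0)|^2$ for $m$-a.e.\ $\zeta_0\in\mathbb T$, and taking square roots on $\sigma$ gives the claimed bound. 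No serious obstacle is anticipated: the $H^\infty$-extension of the intertwining is standard (and if one prefers to avoid it, the same argument goes through using only polynomial $p$, since $\varphi_{z_0}$ is a uniform limit on $\overline{\mathbb D}$ of its Taylor polynomials), and the remaining passage is the classical Poisson--Fatou maneuver.
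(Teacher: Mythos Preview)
Your proof is correct and shares the paper's first step verbatim: from the intertwining and Corollary~\ref{tha} you derive the weighted $L^2$-inequality
\[
\int_\sigma |\varphi|^2 |f|^2\,\mathrm{d}m \le C^2 \int_{\mathbb T}|\varphi|^2|\psi|^2\,\mathrm{d}m \qquad (\varphi\in H^\infty).
\]
The divergence is in how the pointwise bound is extracted. The paper tests against outer functions whose modulus is a two-valued step function (large on a candidate exceptional set $\tau$, small on its complement), which is a contradiction argument using only the existence of outer functions with prescribed log-integrable modulus. You instead test against normalized Cauchy kernels, reduce to an inequality between Poisson extensions, and invoke Fatou's theorem on radial limits. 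Both are standard and equally short. Your route is perhaps slicker as a one-shot passage from weighted $L^2$ to pointwise; the paper's route avoids any appeal to boundary-value theory and fits the flavor of the surrounding arguments, where outer functions with prescribed modulus are the workhorse (e.g.\ Lemma~\ref{lemmin}, the construction of the $\eta_n$).
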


\begin{proof} We have 
\begin{align*}\int_\sigma|\varphi|^2|f|^2\text{\rm d}m & =\|\varphi(U_\sigma)Xx\|^2=
\|X\varphi(T)x\|^2\leq\|X\|^2\|\varphi(T)x\|^2 \\ &
\leq\|X\|^2K^2 C_{{\rm pol}, T}^2\|x\|^2\int_{\mathbb T}|\varphi|^2|\psi|^2\text{\rm d}m\end{align*}
for every $\varphi\in H^\infty$. It remains  to apply  standard reasons based on the fact that for every measurable set $\tau\subset\mathbb T$ and every $a$,$b>0$ there exists $\varphi\in H^\infty$ such that $|\varphi|=a$ a.e. on $\tau$ and $|\varphi|=b$ a.e. on $\mathbb T\setminus\tau$.
\end{proof}

\begin{lemma}\label{asymp1} Suppose that $\mathop{\text{\rm Lim}}$ is a Banach limit, $\mathcal H$ is a Hilbert space, 
$T\in\mathcal L(\mathcal H)$ is a power bounded operator, $X$ is the canonical intertwining mapping constructed 
using $\mathop{\text{\rm Lim}}$. If $\mathcal M\in\operatorname{Lat}T$ is such that $T|_{\mathcal M}$ is not of class $C_{0\cdot}$, 
then $\|X |_{\mathcal M}\|\geq 1$.\end{lemma}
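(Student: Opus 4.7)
The plan is to use the Banach-limit formula $\|Xx\|^2=\mathop{\text{\rm Lim}}_n\|T^nx\|^2$ (which holds for every $x\in\mathcal H$, because $X_+$ is built from the Hilbertian seminorm $(\mathop{\text{\rm Lim}}_n\|T^n\cdot\|^2)^{1/2}$ on $\mathcal H$ and, since $X$ extends $X_+$ into the unitary dilation space $\mathcal H^{(a)}\supset\mathcal H_+^{(a)}$ isometrically, one has $\|Xx\|=\|X_+x\|$), together with the identity $\ker X_+=\mathcal H_{T,0}$ recorded earlier in the text.

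First I would pick $x\in\mathcal M$ that witnesses the hypothesis: since $T|_{\mathcal M}$ is not of class $C_{0\cdot}$, there exists $x\in\mathcal M$ with $\|T^nx\|\not\to 0$, equivalently $x\notin\mathcal H_{T,0}$. The kernel identity then forces $\|Xx\|>0$, and in fact $\|T^kx\|>0$ for every $k\geq 0$; otherwise $T^nx=0$ for all $n\geq k$ would give $\mathop{\text{\rm Lim}}_n\|T^nx\|^2=0=\|Xx\|^2$, a contradiction.

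Next I would form the normalized iterates $y_k=T^kx/\|T^kx\|\in\mathcal M$, so that $\|y_k\|=1$. Shift-invariance of the Banach limit yields
\[
\|Xy_k\|^2=\frac{\mathop{\text{\rm Lim}}_n\|T^{n+k}x\|^2}{\|T^kx\|^2}=\frac{\|Xx\|^2}{\|T^kx\|^2}.
\]
Since any Banach limit lies between $\liminf$ and $\limsup$, we have $\liminf_k\|T^kx\|^2\leq\mathop{\text{\rm Lim}}_k\|T^kx\|^2=\|Xx\|^2$. Hence for every $\varepsilon>0$ some $k$ satisfies $\|T^kx\|^2<\|Xx\|^2+\varepsilon$, giving
\[
\|X|_{\mathcal M}\|\geq\|Xy_k\|\geq\frac{\|Xx\|}{(\|Xx\|^2+\varepsilon)^{1/2}},
\]
and letting $\varepsilon\to 0$ produces $\|X|_{\mathcal M}\|\geq 1$. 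I do not anticipate any serious obstacle; the whole argument is a calibration of Banach-limit invariances, and the only mild point to check is that $\|T^kx\|$ cannot vanish for any single $k$, which was handled above.
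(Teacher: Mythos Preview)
Your argument is correct and is essentially the same as the paper's: both rest on the identity $\|XT^kx\|=\|Xx\|$ for all $k$ (you obtain it from shift-invariance of the Banach limit in the formula $\|X\cdot\|^2=\mathop{\text{\rm Lim}}_n\|T^n\cdot\|^2$, while the paper obtains it from unitarity of $T^{(a)}$ and the intertwining $(T^{(a)})^nX=XT^n$, which is the same fact viewed operator-theoretically), and then compare $\|Xx\|$ with $\|T^kx\|$. The only cosmetic difference is that the paper passes to the Banach limit of the inequality $\|Xx\|\le\|X|_{\mathcal M}\|\,\|T^nx\|$ directly, whereas you take a subsequence along which $\|T^kx\|^2$ approaches its $\liminf$ and then let $\varepsilon\to 0$; both finish the same way.
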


\begin{proof} We do not recall the construction of the canonical intertwining mapping from \cite{ker89} here. 
We recall only that $\|Xx\|^2=\mathop{\text{\rm Lim}}_n\|T^nx\|^2$ for every $x\in \mathcal H$. By assumption, there exists 
 $x\in\mathcal M$ such that $\inf_{n\geq 0}\|T^nx\|>0$. Therefore, $\|Xx\|>0$. Since $(T^{(a)})^nX=XT^n$ and 
$T^{(a)}$ is unitary,      we have 
$$\|Xx\|=\|( T^{(a)})^nX x\|=\|XT^nx\|\leq\|X |_{\mathcal M}\|\|T^nx\| \ \ \text{ for every } \ n\geq 0.$$ Consequently, 
$$\|Xx\|^2\leq\|X|_{\mathcal M}\|^2 \mathop{\text{\rm Lim}}_n\|T^nx\|^2=\|X |_{\mathcal M}\|^2\|Xx\|^2.$$ 
Since $\|Xx\|>0$, the conclusion of the lemma follows.
\end{proof}

\begin{lemma}\label{lemast} Suppose that  $T\in\mathcal L(\mathcal H)$ is an a.c.  polynomially bounded operator, 
$C_{{\rm pol}, T}$ is the polynomial bound of $T$,  $X_{\ast}$ is the canonical intertwining mapping 
for $T^\ast$ constructed using a Banach limit.  Let $X$ be any transformation acting from $\mathcal H$ to some Hilbert space.
For $x\in\mathcal H$ let $\psi\in L^2(\mathbb T)$ be a function from Corollary \ref{tha}.  
If $\int_{\mathbb T}\log|\psi|\text{\rm d}m=-\infty$, then $$\|Xx\|\leq KC_{{\rm pol}, T}\|XX_*^*\|\|x\|.$$
\end{lemma}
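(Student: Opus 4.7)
The plan is to exploit the key consequence of $\int_{\mathbb T}\log|\psi|\,\mathrm dm=-\infty$: by Remark \ref{h2}, $P^2(|\psi|^2m)=L^2(|\psi|^2m)$, so the operator $S_{|\psi|^2m}$ furnished by Corollary \ref{tha} is unitary. Thus Corollary \ref{tha} gives a bounded $W\colon L^2(|\psi|^2m)\to\mathcal H$ with $\|W\|\le KC_{{\rm pol},T}\|x\|$ that intertwines a unitary with $T$: $WS_{|\psi|^2m}=TW$. Also $W(\mathbf 1)=x$ (take $\varphi\equiv 1$), and $\|\mathbf 1\|_{L^2(|\psi|^2m)}^2=\int_{\mathbb T}|\psi|^2\,\mathrm dm\le 1$.

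Next I would pass to adjoints and compare with the asymptote for $T^*$. From $W^*T^*=S_{|\psi|^2m}^*W^*$, iteration gives $W^*T^{*n}=S_{|\psi|^2m}^{*n}W^*$, and unitarity of $S_{|\psi|^2m}^*$ yields $\|W^*y\|=\|W^*T^{*n}y\|\le\|W\|\|T^{*n}y\|$ for every $y\in\mathcal H$ and $n\ge 0$. Applying a Banach limit and invoking $\|X_*y\|^2=\mathop{\text{\rm Lim}}_n\|T^{*n}y\|^2$ (the defining relation of the canonical intertwining mapping, as recalled in the proof of Lemma \ref{asymp1}), I obtain
\[
\|W^*y\|\le\|W\|\|X_*y\|\qquad\text{for every }y\in\mathcal H,
\]
equivalently $WW^*\le\|W\|^2X_*^*X_*$.

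From here Douglas' factorization theorem produces $C\in\mathcal L(L^2(|\psi|^2m),\mathcal H^{*(a)})$ with $\|C\|\le\|W\|$ and $W=X_*^*C$. Then $x=W(\mathbf 1)=X_*^*C(\mathbf 1)$ with $\|C(\mathbf 1)\|\le\|W\|\cdot\|\mathbf 1\|\le KC_{{\rm pol},T}\|x\|$, and consequently
\[
\|Xx\|=\|XX_*^*C(\mathbf 1)\|\le\|XX_*^*\|\|C(\mathbf 1)\|\le KC_{{\rm pol},T}\|XX_*^*\|\|x\|.
\]
The one place to be careful is the factorization step: the operator bound $WW^*\le\|W\|^2X_*^*X_*$ is exactly the hypothesis of Douglas' theorem, so this is routine rather than a real obstacle. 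The true heart of the argument is the observation, enabled precisely by $\int\log|\psi|\,\mathrm dm=-\infty$, that $S_{|\psi|^2m}$ is unitary and hence propagates the norm of $W^*y$ through all powers of $T^*$.
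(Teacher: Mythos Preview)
Your proof is correct and follows essentially the same route as the paper: both exploit that $\int\log|\psi|\,\mathrm dm=-\infty$ makes $S_{|\psi|^2m}$ unitary, then factor $W$ through $X_*^*$ and apply the resulting bound to $\mathbf 1$. The only cosmetic difference is that the paper invokes the universal property of the unitary asymptote (\cite[Theorem 2]{ker89}) to obtain $W^*=B_*X_*$ with $\|B_*\|\le\|W\|$, whereas you derive the same factorization by hand via the norm inequality $\|W^*y\|\le\|W\|\|X_*y\|$ and Douglas' theorem; your $C$ is the paper's $B_*^{\,*}$.
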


\begin{proof} Denote by $\mathcal H_*$ the space on which $(T^*)^{(a)}$ acts. 
Let $W$ be from Corollary \ref{tha}. Since $S_{|\psi|^2m}$ is unitary (see Remark \ref{h2}), 
there exists $B_\ast\in\mathcal L(\mathcal H_*, P^2(|\psi|^2m))$ such that 
 $W^\ast=B_\ast X_*$ and $\|B_\ast\|\leq \|W^\ast\|$ (see  {\cite[Theorem 2]{ker89}}).
 Set $u=B_\ast ^\ast\text{\bf 1}$, 
where $\text{\bf 1}(\zeta)=1$ ($\zeta\in\mathbb T$). 
We have \begin{align*} \|u\|\leq \| B_\ast ^\ast\|\|\text{\bf 1}\|_{P^2(|\psi|^2m)}\leq \|W\|(\int_{\mathbb T} |\psi|^2\text{\rm d}m)^{1/2}
\leq  KC_{{\rm pol}, T}\|x\|,\end{align*}
because $\int_{\mathbb T} |\psi|^2\text{\rm d}m\leq 1$.
Taking into account that 
$$Xx=XW\text{\bf 1}=XX_{\ast}^\ast B_\ast ^\ast\text{\bf 1}=XX_{\ast}^\ast u,$$ 
we conclude  that $\|Xx\|\leq \|XX_*^*\|\|u\|\leq \|XX_*^*\|KC_{{\rm pol}, T}\|x\|.$
\end{proof}

\begin{proposition}\label{proptau1}
Suppose that  $T\in\mathcal L(\mathcal H)$ is an a.c. polynomially bounded operator, 
 $\sigma\subset\mathbb T$ is a measurable set, 
$X\in\mathcal L(\mathcal H,L^2(\sigma))$,  $XT=U_\sigma X$, 
$x\in\mathcal H$ is such that $\vee_{n\geq 0}T^nx=\mathcal H$, $f=Xx$, 
  $\psi_0\in L^2(\mathbb T)$, and $\int_{\mathbb T}|\psi_0|^2\text{\rm d}m\leq 1$. 

 Furthermore, suppose that 
  $0<\varepsilon,\delta<1$, $\tau\subset\sigma$ is a measurable set, $m(\tau)>0$, 
$$ 0\neq u\in\mathcal H \ \text{ is such that } \ Xu\in L^2(\tau) \ \text{and } \ \|Xu\|\geq\delta\|u\|.$$
Set $$\delta_0=\frac{\delta\|u\|-\varepsilon\|X\|}{\|u\|+\varepsilon} 
\ \ \text{ and } \ \ 
\gamma_0=\Bigl(1-\frac{\varepsilon^2\|X\|^2}{(\|u\|-\varepsilon)^2\delta_0^2}\Bigr)^{1/2}$$
(we assume that $\varepsilon$ is so small that $\gamma_0$,$\delta_0>0$).
Let $0<\gamma<\gamma_0$. By assumption on $x$, there exists $\varphi_0\in H^\infty$ such that 
$\|\varphi_0(T)x-u\|\leq\varepsilon$. Then
$$\tau\not\subset\{\zeta\in\sigma \ :\ |\varphi_0(\zeta)f(\zeta)|\leq\gamma\delta_0\|\varphi_0(T)x\||\psi_0(\zeta)|\}.$$
\end{proposition}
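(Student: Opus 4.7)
The plan is to argue by contradiction, exploiting the intertwining relation $XT=U_\sigma X$ together with the fact that $Xu$ is supported on $\tau$.

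First I would use $XT=U_\sigma X$ and $f=Xx$ to observe that
\[
X\varphi_0(T)x=\varphi_0(U_\sigma)Xx=\varphi_0\cdot f
\]
as an element of $L^2(\sigma)$; set $g=\varphi_0 f$. Suppose for contradiction that $\tau\subset\{\zeta\in\sigma:|g(\zeta)|\leq\gamma\delta_0\|\varphi_0(T)x\||\psi_0(\zeta)|\}$ holds a.e. Integrating this pointwise bound over $\tau$ and using $\int_\tau|\psi_0|^2\,\mathrm dm\leq\int_{\mathbb T}|\psi_0|^2\,\mathrm dm\leq 1$ yields
\[
\|\chi_\tau g\|_{L^2(\sigma)}^{2}\leq \gamma^{2}\delta_0^{2}\|\varphi_0(T)x\|^{2}.
\]

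For a matching lower bound I would decompose $\chi_\tau g=Xu+\chi_\tau X(\varphi_0(T)x-u)$, which is legitimate because $Xu\in L^{2}(\tau)$. The second summand has norm at most $\|X\|\varepsilon$ since $\|\varphi_0(T)x-u\|\leq\varepsilon$, so by the reverse triangle inequality and $\|Xu\|\geq\delta\|u\|$,
\[
\|\chi_\tau g\|\geq \|Xu\|-\varepsilon\|X\|\geq \delta\|u\|-\varepsilon\|X\|=\delta_0(\|u\|+\varepsilon),
\]
where the last equality is the definition of $\delta_0$. The assumption $\delta_0>0$ guarantees the quantity being squared is positive.

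Combining the two estimates gives $\delta_0(\|u\|+\varepsilon)\leq\gamma\delta_0\|\varphi_0(T)x\|$, hence $\|u\|+\varepsilon\leq\gamma\|\varphi_0(T)x\|$. But the triangle inequality applied to $\|\varphi_0(T)x-u\|\leq\varepsilon$ yields $\|\varphi_0(T)x\|\leq\|u\|+\varepsilon$, which forces $\gamma\geq 1$. Since $\gamma_0$ is defined as the square root of a quantity strictly smaller than $1$, we have $\gamma<\gamma_0<1$, giving the desired contradiction.

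The main thing to watch for is the accounting of constants: in fact this route yields the contradiction already for every $\gamma<1$, so the specific shape of $\gamma_0$ appears to be a slightly tighter constant that is presumably tuned to the application of the proposition in Section~5; any $\gamma<\gamma_0\leq 1$ trivially satisfies $\gamma<1$, so the above argument covers the stated conclusion. I expect the only delicate point to be verifying that $\delta_0(\|u\|+\varepsilon)=\delta\|u\|-\varepsilon\|X\|$ is indeed the correct lower bound to cancel the $\delta_0^{2}$ on the two sides of the comparison, which follows immediately from the definition of $\delta_0$.
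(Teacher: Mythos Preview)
Your argument is correct, and it is in fact slightly different from the paper's. The paper works with the full $L^2(\sigma)$-norm of $\varphi_0 f$: it first shows $\|X\varphi_0(T)x\|\geq\delta_0\|\varphi_0(T)x\|$, then splits $\int_\sigma|\varphi_0 f|^2$ into the piece over $\sigma\setminus\tau$ (bounded by $\varepsilon^2\|X\|^2$, since $Xu$ vanishes there) and the piece over $\tau$ (bounded using the assumed inclusion), arriving at $1-\gamma^2\leq\varepsilon^2\|X\|^2/\big((\|u\|-\varepsilon)^2\delta_0^2\big)$, which is exactly the inequality $\gamma\geq\gamma_0$. You instead restrict to $\tau$ from the outset and bound $\|\chi_\tau(\varphi_0 f)\|$ from below via $Xu=\chi_\tau Xu$; this is a little cleaner and, as you noticed, already gives a contradiction for every $\gamma<1$. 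The paper's route is what produces the precise constant $\gamma_0$ that is then fed into Proposition~4.5, but for the statement as written your argument suffices and proves a formally stronger conclusion.
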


\begin{proof} We have $\varphi_0f=X\varphi_0(T)x$, and  
$$(\varphi_0f)|_{\sigma\setminus\tau}=P_{L^2(\sigma\setminus\tau)}X\varphi_0(T)x=
P_{L^2(\sigma\setminus\tau)}X(\varphi_0(T)x-u).$$
Therefore, $$\int_{\sigma\setminus\tau}|\varphi_0f|^2\text{\rm d}m\leq\|X\|^2\|\varphi_0(T)x-u\|^2\leq\varepsilon^2\|X\|^2.$$
Also, \begin{align*}&\|X\varphi_0(T)x\|\geq\|Xu\|-\varepsilon\|X\|\geq\delta\|u\|-\varepsilon\|X\| \\
& \text{and } \ \ 
\delta_0\|\varphi_0(T)x\|\leq(\|u\|+\varepsilon)\delta_0.\end{align*}
Thus, $$\|X\varphi_0(T)x\|\geq\delta_0\|\varphi_0(T)x\|.$$

Let us assume that $$\tau\subset\{\zeta\in\sigma \ :\ |\varphi_0(\zeta)f(\zeta)|\leq\gamma\delta_0\|\varphi_0(T)x\||\psi_0(\zeta)|\}.$$
Then 
\begin{align*}\delta_0^2\|\varphi_0(T)x\|^2 & \leq\|X\varphi_0(T)x\|^2=\int_\sigma|\varphi_0f|^2\text{\rm d}m \\ &
=\int_{\sigma\setminus\tau}|\varphi_0f|^2\text{\rm d}m + 
\int_\tau|\varphi_0f|^2\text{\rm d}m\\&\leq\varepsilon^2\|X\|^2 + \int_\tau\gamma^2\delta_0^2\|\varphi_0(T)x\|^2|\psi_0|^2\text{\rm d}m \\ &
\leq \varepsilon^2\|X\|^2 + \gamma^2\delta_0^2\|\varphi_0(T)x\|^2\end{align*}
(because $\int_\tau|\psi_0|^2\text{\rm d}m\leq\int_{\mathbb T}|\psi_0|^2\text{\rm d}m\leq 1$). 
Thus, $$1-\gamma^2\leq\frac{\varepsilon^2\|X\|^2}{\delta_0^2\|\varphi_0(T)x\|^2}
\leq\frac{\varepsilon^2\|X\|^2}{\delta_0^2(\|u\|-\varepsilon)^2},$$ a contradiction with  assumption on $\gamma$.
\end{proof}

\begin{proposition}\label{proptau2}Suppose that  $T\in\mathcal L(\mathcal H)$ is an a.c. polynomially bounded operator, 
   $\sigma\subset\mathbb T$ is a measurable set, $T^{(a)}\cong U_\sigma$, and 
 $X$ is the canonical intertwining mapping for $T$ and $U_\sigma$ constructed using a Banach limit.
Suppose that $V\in\mathcal L(\mathcal K)$ is a unitary operator, $Y\in\mathcal L(\mathcal K,\mathcal H)$ 
is such that $YV=TY$ and $\operatorname{clos}XY\mathcal K=L^2(\sigma)$.

 Furthermore, suppose that 
$x\in\mathcal H$ and $\{\varphi_n\}_{n\in\mathbb N}\subset H^\infty$ are  such that $\{\varphi_n(T)x\}_{n\in\mathbb N}$ is dense in $\mathcal H$. Set  $f=Xx$.  Suppose that  $\{\psi_n\}_{n\in\mathbb N}\subset L^2(\mathbb T)$ are  such that
$\int_{\mathbb T}|\psi_n|^2\text{\rm d}m\leq 1$ for all  $n\in\mathbb N$.  
For $n\in\mathbb N$ and $0<c<1$ set 
$$\sigma_{n,c}=\{\zeta\in\sigma\ :\ |\varphi_n(\zeta)f(\zeta)|
\leq c\|\varphi_n(T)x\||\psi_n(\zeta)|\}. $$

For $0<\varepsilon_0,\delta_0,\gamma_0<1$ set 
\begin{align*}\mathbb N_{\varepsilon_0,\delta_0}=\{n\in\mathbb N \ :
\  & \|X\varphi_n(T)x\|\geq\delta_0\|\varphi_n(T)x\| \\  & 
\ \text{ and } \ 
1-\varepsilon_0\leq\|\varphi_n(T)x\|\leq 1+\varepsilon_0\}\end{align*}
and $$\tau_{\varepsilon_0,\delta_0,\gamma_0}=
\bigcap_{n\in\mathbb N_{\varepsilon_0,\delta_0}}\sigma_{n,\gamma_0\delta_0}.$$

Finally, let $0<\varepsilon_{00}, \delta_0,\gamma_0<1$ be given. Then there exists 
$0<\varepsilon_0\leq\varepsilon_{00}$ 
such that $$m(\tau_{\varepsilon_0,\delta_0,\gamma_0})=0.$$
\end{proposition}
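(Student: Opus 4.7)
The plan is to argue by contradiction: assume $\tau := \tau_{\varepsilon_0,\delta_0,\gamma_0}$ has positive measure for every admissible $\varepsilon_0$, and derive a contradiction by producing a unit vector $u \in \mathcal{H}$ with $Xu \in L^2(\tau)$ and $\|Xu\| > \delta_0$, then approximating $u$ by some $\varphi_n(T)x$ with $\|\varphi_n(T)x - u\| < \varepsilon$ small (forcing $n \in \mathbb{N}_{\varepsilon_0,\delta_0}$ and hence $\tau \subset \sigma_{n,\gamma_0\delta_0}$), and finally applying Proposition~\ref{proptau1} to $u, \varphi_n, \psi_n$ to force $\tau \not\subset \sigma_{n,\gamma_0\delta_0}$.

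\emph{Constructing $u$ (the main obstacle).} The delicate point is that we need $\|Xu\|/\|u\|$ strictly above $\delta_0$ \emph{and} $Xu$ supported exactly on $\tau$; naively taking $Yk$ with $XYk \approx \chi_\tau$ does not control the ratio. Instead, let $E$ be the spectral measure of the unitary $V$ on $\mathcal{K}$, set $\mathcal{K}_\tau := E(\tau)\mathcal{K}$, and put $\mathcal{M} := \overline{Y\mathcal{K}_\tau} \subset \mathcal{H}$. Since $XYV = U_\sigma XY$, Borel functional calculus gives $f(U_\sigma)XYk = XYf(V)k = 0$ for any Borel $f$ vanishing on $\tau$ and $k \in \mathcal{K}_\tau$; taking $f = \chi_{\sigma\setminus\tau}$ shows $XY\mathcal{K}_\tau \subset L^2(\tau)$, and by continuity $X\mathcal{M} \subset L^2(\tau)$. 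The hypothesis $\operatorname{clos} XY\mathcal{K} = L^2(\sigma)$ together with the orthogonal splitting $L^2(\sigma) = L^2(\tau) \oplus L^2(\sigma\setminus\tau)$ then forces $\operatorname{clos} XY\mathcal{K}_\tau = L^2(\tau)$, which is nontrivial since $m(\tau) > 0$. Picking $k_0 \in \mathcal{K}_\tau$ with $XYk_0 \neq 0$, we have $\operatorname{Lim}_n \|T^n(Yk_0)\|^2 = \|XYk_0\|^2 > 0$, so $\|T^n(Yk_0)\| \not\to 0$ and $T|_\mathcal{M}$ is not of class $C_{0\cdot}$. Lemma~\ref{asymp1} then yields $\|X|_\mathcal{M}\| \geq 1$, so for any $\eta \in (0, 1-\delta_0)$ there is $u \in \mathcal{M}$ with $\|u\| = 1$ and $\|Xu\| \geq \delta := 1 - \eta > \delta_0$; the inclusion $Xu \in L^2(\tau)$ is automatic.

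\emph{Finishing.} Density of $\{\varphi_n(T)x\}_{n \in \mathbb{N}}$ in $\mathcal{H}$ produces $n$ with $\|\varphi_n(T)x - u\| < \varepsilon$ for any prescribed $\varepsilon > 0$. Whenever $\varepsilon \leq \varepsilon_0$ and $\varepsilon \leq (\delta - \delta_0)/(\|X\| + \delta_0)$, the estimates $\|\varphi_n(T)x\| \in [1-\varepsilon_0, 1+\varepsilon_0]$ and $\|X\varphi_n(T)x\|/\|\varphi_n(T)x\| \geq (\delta - \varepsilon\|X\|)/(1+\varepsilon) \geq \delta_0$ place $n \in \mathbb{N}_{\varepsilon_0,\delta_0}$, so $\tau \subset \sigma_{n,\gamma_0\delta_0}$. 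Now invoke Proposition~\ref{proptau1} with this $u$, $\tau$, $\delta$, $\varepsilon$, $\varphi_0 = \varphi_n$, $\psi_0 = \psi_n$: the derived quantities $\delta_0^{\mathrm{prop}} = (\delta - \varepsilon\|X\|)/(1+\varepsilon)$ and $\gamma_0^{\mathrm{prop}}$ tend to $\delta$ and $1$ respectively as $\varepsilon \to 0$, so $\gamma_0^{\mathrm{prop}}\delta_0^{\mathrm{prop}} > \gamma_0\delta_0$ for $\varepsilon$ small enough (since $\delta > \delta_0 > \gamma_0\delta_0$). Setting $\gamma := \gamma_0\delta_0/\delta_0^{\mathrm{prop}} < \gamma_0^{\mathrm{prop}}$, Proposition~\ref{proptau1} yields $\tau \not\subset \{\zeta \in \sigma : |\varphi_n f| \leq \gamma\delta_0^{\mathrm{prop}}\|\varphi_n(T)x\||\psi_n|\} = \sigma_{n,\gamma_0\delta_0}$, contradicting the line above. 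Therefore $m(\tau_{\varepsilon_0,\delta_0,\gamma_0}) = 0$ for the chosen $\varepsilon_0$.
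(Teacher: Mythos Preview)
Your proof is correct and follows essentially the same route as the paper's: assume $m(\tau)>0$, build a $T$-invariant subspace $\mathcal M$ with $X\mathcal M\subset L^2(\tau)$ and $X|_{\mathcal M}\not\equiv 0$, invoke Lemma~\ref{asymp1} to get a unit vector $u\in\mathcal M$ with $\|Xu\|$ close to $1$, approximate $u$ by $\varphi_n(T)x$, and feed this into Proposition~\ref{proptau1} to contradict $\tau\subset\sigma_{n,\gamma_0\delta_0}$.

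Two small differences worth noting. First, the paper constructs $\mathcal M$ by first passing to $\mathcal K\ominus\ker XY$, identifying it with $L^2(\sigma)$ via a multiplier $g\neq 0$ a.e., and then taking $\mathcal M=\operatorname{clos}YL^2(\tau)$; you instead use the spectral projection $E(\tau)$ of $V$ directly and set $\mathcal M=\operatorname{clos}YE(\tau)\mathcal K$. Your spectral argument is sound (the key point being $XY\mathcal K_\tau\subset L^2(\tau)$ and $XY\mathcal K_{\mathbb T\setminus\tau}\subset L^2(\sigma\setminus\tau)$, whence $\operatorname{clos}XY\mathcal K_\tau=L^2(\tau)$), and avoids the preliminary reduction. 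Second, the paper takes the approximation tolerance equal to $\varepsilon_0$ and must therefore shrink $\varepsilon_0\leq\varepsilon_{00}$ to force the Proposition~\ref{proptau1} constants into range; you decouple the approximation tolerance $\varepsilon$ from $\varepsilon_0$ and can in fact take $\varepsilon_0=\varepsilon_{00}$ throughout (so your opening line about ``every admissible $\varepsilon_0$'' is unnecessary --- a single choice suffices). Both are cosmetic variations on the same argument.
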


\begin{proof} Set $\mathcal E=\ker XY$. Then $\mathcal E\in\operatorname{Lat}V$ 
and $XY|_{\mathcal K\ominus\mathcal E}$ realizes the relation
\begin{equation}\label{congu} P_{\mathcal K\ominus\mathcal E}V|_{\mathcal K\ominus\mathcal E}\prec U_\sigma.\end{equation}
Therefore, $V^*|_{\mathcal K\ominus\mathcal E}$ is of class $C_{\cdot 1}$. Consequently, 
$V^*|_{\mathcal K\ominus\mathcal E}$ is unitary. Thus,  
$\mathcal K\ominus\mathcal E\in\operatorname{Lat}V$, and 
$V|_{\mathcal K\ominus\mathcal E}\cong U_\sigma$ by \eqref{congu} and  {\cite[Proposition II.3.4]{sfbk}}.
Without loss of generality, $\mathcal K\ominus\mathcal E =  L^2(\sigma)$ and 
$V|_{\mathcal K\ominus\mathcal E}= U_\sigma$. We have $\ker XY|_{ L^2(\sigma)}=\{0\}$. 
Therefore, there exists $g\in L^\infty(\sigma)$ such that 
\begin{equation}\label{kerxxyy} g\neq 0 \ \ \text{ a.e. on } \sigma \ \text{ and } XYh=gh \ \text{ for every }
h\in L^2(\sigma).\end{equation}

For $0<\varepsilon_0,\delta<1$ set 
$$\delta_1=\frac{\delta-\varepsilon_0\|X\|}{1+\varepsilon_0} \ \ \text{   and } \ \   
\gamma_1=\Bigl(1-\frac{\varepsilon_0^2\|X\|^2}{(1-\varepsilon_0)^2\delta_1^2}\Bigr)^{1/2}.$$
Then $\gamma_1\to 1$ and  $\delta_1\to 1$  when $\delta\to 1$ and $\varepsilon_0\to 0$. 
Choose $0<\varepsilon_0,\delta<1$ such that $0<\varepsilon_0\leq\varepsilon_{00}$, $\gamma_0<\gamma_1$ 
 and $\delta_0\leq\delta_1$. 
We claim that $m(\tau_{\varepsilon_0,\delta_0,\gamma_0})=0$.

Set $\tau=\tau_{\varepsilon_0,\delta_0,\gamma_0}$. Let us assume that  $m(\tau)>0$. 
Set $\mathcal M=\operatorname{clos}Y L^2(\tau)$. 
By \eqref{kerxxyy}, $X|_{\mathcal M}\not\equiv\mathbb O$. By  Lemma \ref{asymp1}, $\|X|_{\mathcal M}\|\geq 1$. Therefore, there exists $u\in\mathcal M$ such that $\|u\|=1$ and 
$\|Xu \|\geq \delta$. We have 
$$Xu\in\operatorname{clos}X\mathcal M=\operatorname{clos}XYL^2(\tau)= L^2(\tau).$$
Since $\{\varphi_n(T)x\}_{n\in\mathbb N}$ is dense in $\mathcal H$, 
there exists $n\in\mathbb N$ such that $$\|\varphi_n(T)x-u\|\leq\varepsilon_0.$$ 
Therefore, $n\in \mathbb N_{\varepsilon_0,\delta_0}$.
By Proposition \ref{proptau1} applied to  $x$ and $\varphi_n$ with $\varepsilon_0$ and $\delta$, taking into account that $\gamma_0<\gamma_1$, we obtain $\tau\not\subset\sigma_{n, \gamma_0\delta_1}$. 
By the definition of $\tau$, we have  $\tau\subset\sigma_{n,\gamma_0\delta_0}\subset\sigma_{n, \gamma_0\delta_1}$, 
a contradiction.
\end{proof}

 The following lemma is a version of  {\cite[Theorem 3]{ker89}}.

  \begin{lemma}\label{ker89thm3} Suppose that $\mathop{\text{\rm Lim}}$ is a Banach limit, $T$ is a power bounded operator, 
$\mathcal M\in\operatorname{Lat}T$, $X_*$ and $Z$ are canonical intertwining mappings 
for $T^*$ and $(T|_{\mathcal M})^*$ from their unitary asymptotes, respectively, constructed using 
$\mathop{\text{\rm Lim}}$.  
Set $$\mathcal L=\bigvee_{n\geq 0}\bigl((T^*)^{(a)}\bigr)^{-n}X_*\mathcal M^\perp.$$
Then there exists a transformation $B$ such that $$Z=B P_{\mathcal L^\perp}X_*|_{\mathcal M} \ \ \text{ and } \ \  
\|B\|\leq\sqrt 2\|X_*\|.$$
\end{lemma}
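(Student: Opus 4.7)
The plan is to show that $\mathcal L$ reduces the unitary $(T^*)^{(a)}$, that $Y:=P_{\mathcal L^\perp}X_*|_{\mathcal M}$ intertwines $(T|_{\mathcal M})^*$ with the unitary $W:=(T^*)^{(a)}|_{\mathcal L^\perp}$, then to establish the estimate $\|Zy\|\le\sqrt 2\,\|X_*\|\,\|Yy\|$ for $y\in\mathcal M$, from which $B$ is built by $B(Yy):=Zy$ on $\overline{Y\mathcal M}$ and zero on its orthogonal complement in $\mathcal L^\perp$.

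First I would verify the reducing property. The inclusion $((T^*)^{(a)})^{-1}\mathcal L\subset\mathcal L$ is built into the definition. For the other direction, since $\mathcal M\in\operatorname{Lat}T$ one has $T^*\mathcal M^\perp\subset\mathcal M^\perp$, so for $z\in\mathcal M^\perp$, $(T^*)^{(a)}X_*z=X_*T^*z\in X_*\mathcal M^\perp\subset\mathcal L$, giving $(T^*)^{(a)}\mathcal L\subset\mathcal L$. Hence $\mathcal L$ reduces $(T^*)^{(a)}$ and $W$ is unitary. For the intertwining, for $y\in\mathcal M$ the identity $X_*(T|_{\mathcal M})^*y=(T^*)^{(a)}X_*y-X_*P_{\mathcal M^\perp}T^*y$ holds and the last summand lies in $\mathcal L$; projecting onto $\mathcal L^\perp$ and using that $P_{\mathcal L^\perp}$ commutes with $(T^*)^{(a)}$ yields $Y(T|_{\mathcal M})^*y=WYy$.

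The core step is the quantitative estimate. Writing $(T^*)^ny=y_n+w_n$ with $y_n=((T|_{\mathcal M})^*)^ny\in\mathcal M$ and $w_n=P_{\mathcal M^\perp}(T^*)^ny$, the Banach-limit formula gives $\|Zy\|^2=\operatorname{Lim}_n\|y_n\|^2$ and $\|X_*y\|^2=\|Zy\|^2+\operatorname{Lim}_n\|w_n\|^2$. Decomposing $X_*y=a_y+b_y$ with $a_y=P_{\mathcal L}X_*y$ and $b_y=Yy$, the identity $X_*y_n=((T^*)^{(a)})^nX_*y-X_*w_n$ together with $X_*w_n\in\mathcal L$ and the reducing structure gives $\|X_*y_n\|^2=\|U_1^na_y-X_*w_n\|^2+\|Yy\|^2$, where $U_1=(T^*)^{(a)}|_{\mathcal L}$. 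Combining $\|X_*y_n\|^2\le\|X_*\|^2\|y_n\|^2$ with Cauchy--Schwarz on the cross term $\langle U_1^na_y,X_*w_n\rangle$, passing to the Banach limit in $n$, and applying the elementary splitting $(p+q)^2\le 2p^2+2q^2$ to control $\|a_y\|$ by $\sqrt{\operatorname{Lim}_n\|X_*w_n\|^2}\le\|X_*\|\sqrt{\operatorname{Lim}_n\|w_n\|^2}$ and $\sqrt{\|X_*\|^2\|Zy\|^2-\|Yy\|^2}$ should produce the estimate $\|Zy\|^2\le 2\|X_*\|^2\|Yy\|^2$.

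The main obstacle is the delicate Banach-limit bookkeeping in this third step, namely balancing the cross-terms so as to extract the precise constant $\sqrt 2$. The remaining pieces are routine: once the inequality holds, the prescription $B(Yy):=Zy$ is well-defined and bounded by $\sqrt 2\,\|X_*\|$ on $\overline{Y\mathcal M}\subset\mathcal L^\perp$, and extending by zero to the orthogonal complement produces $B$ with $\|B\|\le\sqrt 2\,\|X_*\|$ and $Z=BP_{\mathcal L^\perp}X_*|_{\mathcal M}$, as required.
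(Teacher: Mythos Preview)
Your first two steps---that $\mathcal L$ reduces $(T^*)^{(a)}$ and that $Y:=P_{\mathcal L^\perp}X_*|_{\mathcal M}$ intertwines $(T|_{\mathcal M})^*$ with $W$---are correct and are implicit in the paper's argument as well. The gap is in the third step: the chain of inequalities you set up runs the wrong way. From $\|X_*y_n\|^2\le\|X_*\|^2\|y_n\|^2$ together with your decomposition $\|X_*y_n\|^2=\|U_1^na_y-X_*w_n\|^2+\|Yy\|^2$, passing to the Banach limit yields
\[
\|Yy\|^2+\operatorname{Lim}_n\|U_1^na_y-X_*w_n\|^2\le\|X_*\|^2\|Zy\|^2,
\]
which bounds $\|Yy\|^2$ \emph{above} by a multiple of $\|Zy\|^2$, not below. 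Your subsequent estimate $\|a_y\|\le\sqrt{\operatorname{Lim}_n\|X_*w_n\|^2}+\sqrt{\|X_*\|^2\|Zy\|^2-\|Yy\|^2}$ is indeed correct, but feeding it into the identity $\|a_y\|^2+\|Yy\|^2=\|Zy\|^2+\operatorname{Lim}_n\|w_n\|^2$ again produces only an upper bound on $\|Yy\|^2$ in terms of $\|Zy\|^2$ and $\operatorname{Lim}_n\|w_n\|^2$; no rearrangement of these inequalities yields $\|Zy\|^2\le 2\|X_*\|^2\|Yy\|^2$.

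The paper's proof circumvents this by invoking the universal property of the unitary asymptote of $T^*$ (this is precisely the content of \cite[Theorem~3(b)]{ker89}). One forms the auxiliary intertwiner
\[
\widetilde X=\begin{pmatrix}X_*|_{\mathcal M^\perp}&P_{\mathcal L}X_*|_{\mathcal M}\\ \mathbb O&Z\end{pmatrix},
\]
which sends $T^*$ to the unitary $(T^*)^{(a)}|_{\mathcal L}\oplus((T|_{\mathcal M})^*)^{(a)}$; the universal property then supplies $\widetilde B$ with $\widetilde BX_*=\widetilde X$ and $\|\widetilde B\|\le\|\widetilde X\|\le\sqrt 2\,\|X_*\|$ (the last estimate because the top row of $\widetilde X$ is $P_{\mathcal L}X_*$ and the bottom row has norm $\|Z\|\le\|X_*\|$). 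The lower-right block of $\widetilde B$ is the desired $B$. The key feature here is that the universal property extends the factorization from $\operatorname{clos}X_*\mathcal H$ to all of $\mathcal H^{(a)}$ via the density $\bigvee_{n\ge 0}((T^*)^{(a)})^{-n}X_*\mathcal H=\mathcal H^{(a)}$; this global extension is what pins down $B$ on $\mathcal L^\perp$ with the correct norm, and it is not visible from Banach-limit computations performed at a single fixed $y\in\mathcal M$.
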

\begin{proof} By the constructions of  $X_*$ and $Z$, 
 $$\|Zx\|^2=\mathop{\text{\rm Lim}}_n\|\bigl((T|_{\mathcal M})^*\bigr)^nx\|^2=
\mathop{\text{\rm Lim}}_n\| P_{\mathcal M}(T^*)^n x\|^2
\leq\mathop{\text{\rm Lim}}_n\|(T^*)^n x\|^2 \!=\! \|X_*x\|^2$$
for every $x\in\mathcal M$. Therefore, $\|Z\|\leq \|X_*\|$.

Following the proof of  {\cite[Theorem 3(b)]{ker89}} (applied to $T^*$) we obtain a transformation
$\begin{pmatrix}B_1 & B_2 \\ \mathbb O & B\end{pmatrix}$ such that 
\begin{align*}\begin{pmatrix}B_1 & B_2 \\ \mathbb O & B\end{pmatrix}
\begin{pmatrix}X_*|_{\mathcal M^\perp} & P_{\mathcal L}X_*|_{\mathcal M} \\ 
\mathbb O & P_{\mathcal L^\perp}X_*|_{\mathcal M}\end{pmatrix} & =
\begin{pmatrix}X_*|_{\mathcal M^\perp} & P_{\mathcal L}X_*|_{\mathcal M} \\ 
\mathbb O & Z\end{pmatrix} \\ \text{ and } 
\Bigl\|\begin{pmatrix}B_1 & B_2 \\ \mathbb O & B\end{pmatrix}\Bigr\| & \leq
\Bigl\|\begin{pmatrix}X_*|_{\mathcal M^\perp} & P_{\mathcal L}X_*|_{\mathcal M} \\ 
\mathbb O & Z\end{pmatrix} \Bigr\|\leq\sqrt 2\|X_*\|.\end{align*}
Therefore, $B P_{\mathcal L^\perp}X_*|_{\mathcal M}=Z$ and 
$$\|B\|\leq\Bigl\|\begin{pmatrix}B_1 & B_2 \\ \mathbb O & B\end{pmatrix}\Bigr\|\leq\sqrt 2\|X_*\|.\qedhere$$
\end{proof}

\begin{proposition}\label{proptt0tt} Set $\chi(\zeta)=\zeta$ and $\text{\bf 1}(\zeta)=1$ $(\zeta\in\mathbb T)$. 
Set $$H^2_-=L^2(\mathbb T)\ominus H^2 \ \ \text{ and } \ \  S_*=P_{H^2_-}U_{\mathbb T}|_{ H^2_-}.$$  
Suppose that  $\mathcal N$ and $\mathcal K$ are Hilbert spaces, $\mathop{\text{\rm Lim}}$ is a Banach limit, 
$$T_0\in\mathcal L(\mathcal N)\ \text{  is power bounded,}$$ $V_0\in\mathcal L(\mathcal K)$, $Z_0\in\mathcal L(\mathcal N,\mathcal K)$, 
$(Z_0,V_0)$ is the unitary asymptote of $T_0^*$ constructed using $\mathop{\text{\rm Lim}}$, $Y_0\in\mathcal L(H^2,\mathcal N)$, 
and $Y_0 S = T_0Y_0$. Set $$T=\begin{pmatrix} T_0 & (\cdot,\overline\chi)Y_0\text{\bf 1}\\ \mathbb O & S_*\end{pmatrix}
 \ \ \text{ and } \ \ X_*=\begin{pmatrix}Z_0 & \mathbb O \\ Y_0^* &  \mathbb O \\  \mathbb O & I_{H^2_-}\end{pmatrix}.$$
Then 
\begin{equation}\label{yy0oplus}(Y_0\oplus I_{H^2_-})U_{\mathbb T}=T(Y_0\oplus I_{H^2_-}),\end{equation} 
$T$ is power bounded, and $(X_*, V_0\oplus U_{\mathbb T}^*)$ 
is the unitary asymptote of $T^*$ constructed using $\mathop{\text{\rm Lim}}$. Furthermore, if $T_0$ is polynomially bounded, then $T$ is 
polynomially bounded, and if $T_0$ is a.c., then $T$ is a.c..

Moreover, let $(X_0,U_{\mathbb T})$ be the unitary asymptote of $T_0$ constructed using some  Banach limit. 
Since  $(X_0Y_0)S=U_{\mathbb T}(X_0Y_0)$, there exists $g_1\in L^\infty(\mathbb T)$ such that 
$X_0Y_0h=g_1h$ for every $h\in H^2$. Let $$X\in\mathcal L( \mathcal N\oplus  H^2_-, L^2(\mathbb T))$$ be defined by the formula
$$ X(x\oplus h)=X_0 x + g_1 h \ \ \ \ \ (x\in \mathcal N, \ h\in  H^2_-).$$ 
Then $(X,U_{\mathbb T})$ is the unitary asymptote of $T$ constructed using the same  Banach limit as $X_0$.
\end{proposition}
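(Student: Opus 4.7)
The plan is to verify each assertion by direct block-matrix computation, exploiting the identification $L^2(\mathbb T) = H^2 \oplus H^2_-$. Under this identification $U_{\mathbb T}$ has block form $\begin{pmatrix} S & (\cdot,\overline\chi)\mathbf 1 \\ \mathbb O & S_* \end{pmatrix}$, since for $k \in H^2_-$ the $H^2$-part of $\chi k$ is the constant $(k,\overline\chi)\mathbf 1$; then \eqref{yy0oplus} is a one-line check from $Y_0 S = T_0 Y_0$. Iterating gives $T^n = \begin{pmatrix} T_0^n & Y_0 E_n \\ \mathbb O & S_*^n \end{pmatrix}$ with $E_n k = P_{H^2}(\chi^n k)$, and since $\|E_n\| \leq 1$, $T$ is power bounded with constant controlled by $\sup_n\|T_0^n\|$ and $\|Y_0\|$. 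Extending by linearity, $p(T)(x \oplus k) = (p(T_0) x + Y_0 P_{H^2}(p k)) \oplus P_{H^2_-}(p k)$ for polynomials $p$, and the estimate $\|p k\|_{L^2} \leq \|p\|_\infty\|k\|$ together with polynomial boundedness of $T_0$ gives polynomial boundedness of $T$; the same formula extended to $\varphi \in H^\infty$ defines an $H^\infty$-functional calculus, so $T$ is a.c.\ whenever $T_0$ is.

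To identify the unitary asymptote of $T^*$, I would first check $X_* T^* = (V_0 \oplus U_{\mathbb T}^*) X_*$ by block multiplication, using $Z_0 T_0^* = V_0 Z_0$, $Y_0^* T_0^* = S^* Y_0^*$, and the identity $D^* x = (Y_0^* x)(0)\,\overline\chi$ for the adjoint of the rank-one off-diagonal block of $T$. For the norm identity $\|X_*(x\oplus k)\|^2 = \mathop{\text{\rm Lim}}_n\|(T^*)^n(x\oplus k)\|^2$, the starting point is $(T^*)^n(x\oplus k) = (T_0^*)^n x \oplus (D_n^* x + \overline\chi^n k)$ with $D_n^* x = P_{H^2_-}(\overline\chi^n Y_0^* x)$; the key observation is that for $n$ large enough $D_n^* x$ and $\overline\chi^n k$ occupy disjoint Fourier supports in the $\{\overline\chi^{j+1}\}$-basis of $H^2_-$ and hence are orthogonal, with squared norms tending respectively to $\|Y_0^* x\|^2$ and $\|k\|^2$. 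The minimal-extension condition \eqref{unitdense} for $V_0 \oplus U_{\mathbb T}^*$ reduces to the corresponding condition for $V_0$ together with the density of $\bigvee_{n\geq 0} \chi^n H^2_-$ in $L^2(\mathbb T)$, both immediate.

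For the \emph{Moreover} assertion, $g_1 \in L^\infty(\mathbb T)$ is obtained by setting $g_1 = X_0 Y_0 \mathbf 1 \in L^2$, extending $X_0 Y_0 p = g_1 p$ to polynomials via $X_0 Y_0 S = U_{\mathbb T} X_0 Y_0$, and bounding $\|g_1\|_\infty \leq \|X_0 Y_0\|$ from the observation that the Poisson integral of $|g_1|^2$ at $\lambda \in \mathbb D$ equals $\|X_0 Y_0 h_\lambda\|_{L^2}^2 \leq \|X_0 Y_0\|^2$ for the normalized reproducing kernel $h_\lambda$, then taking nontangential boundary values. The intertwining $XT = U_{\mathbb T} X$ then reduces to the decomposition $\chi k = (k,\overline\chi)\mathbf 1 + P_{H^2_-}(\chi k)$ for $k \in H^2_-$. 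The main obstacle is the norm identity $\|X(x\oplus k)\|^2 = \mathop{\text{\rm Lim}}_n\|T^n(x\oplus k)\|^2$: using $T^n(x\oplus k) = (T_0^n x + Y_0 f_n) \oplus P_{H^2_-}(\chi^n k)$ with $f_n = P_{H^2}(\chi^n k)$, the $H^2_-$-summand vanishes in norm, while the squared norm of the first summand expands as $\|T_0^n x\|^2 + 2\operatorname{Re}(T_0^n x, Y_0 f_n) + \|Y_0 f_n\|^2$. Writing $k = \sum_{j\geq 0} b_j \overline\chi^{j+1}$ gives $Y_0 f_n = \sum_{j=0}^{n-1} b_j T_0^{n-1-j} Y_0 \mathbf 1$, and each of the three pieces is evaluated by reindexing with $T_0 Y_0 = Y_0 S$ and invoking shift-invariance of $\mathop{\text{\rm Lim}}$ together with the polarized identity $\mathop{\text{\rm Lim}}_n(T_0^n u, T_0^n v) = (X_0 u, X_0 v)$; the resulting coefficients are Fourier coefficients of $|g_1|^2$, and the three sums assemble into $\|X_0 x\|^2 + 2\operatorname{Re}(X_0 x, g_1 k)_{L^2} + \|g_1 k\|_{L^2}^2 = \|X(x\oplus k)\|^2$. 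A uniform bound $\|Y_0 f_n\| \leq \|Y_0\|\,\|k\|$ extends the identity from finitely-supported to arbitrary $k \in H^2_-$, and \eqref{unitdense} for $U_{\mathbb T}$ is inherited directly from $(X_0, U_{\mathbb T})$ being the unitary asymptote of $T_0$.
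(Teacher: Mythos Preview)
Your proof is correct and, for the intertwining relation \eqref{yy0oplus}, the formula \eqref{ptt} $P_{\mathcal N}p(T)|_{H^2_-}=Y_0P_{H^2}p(U_{\mathbb T})|_{H^2_-}$, the power/polynomial boundedness, the a.c.\ conclusion, and the computation of $\mathop{\text{\rm Lim}}_n\|(T^*)^n(x\oplus k)\|^2$, it matches the paper's argument essentially line for line. The one genuine divergence is in the \emph{Moreover} part identifying $(X,U_{\mathbb T})$ as the unitary asymptote of $T$.

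The paper does not compute $\mathop{\text{\rm Lim}}_n\|T^n(x\oplus k)\|^2$ directly. Instead, denoting by $X_T$ the canonical intertwining mapping for $T$, it invokes \cite[Theorem~3]{ker89} to obtain $T^{(a)}\cong(T_0)^{(a)}\cong U_{\mathbb T}$ and $X_T|_{\mathcal N}=X_0$; then, since $X$ and $X_T$ agree on $\mathcal N$ and both satisfy $(\cdot)T=U_{\mathbb T}(\cdot)$, the difference $X-X_T$ vanishes on $\mathcal N$ and intertwines, so from $T^n(0\oplus k)=Y_0f_n\oplus S_*^n k$ with $Y_0f_n\in\mathcal N$ and $S_*^n k\to 0$ one gets $(X-X_T)(0\oplus k)=0$. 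This is shorter but leans on the cited structural result about how isometric asymptotes restrict to invariant subspaces. Your route---expanding $\|T_0^n x+Y_0f_n\|^2$, reindexing via $T_0Y_0=Y_0S$, applying shift-invariance of $\mathop{\text{\rm Lim}}$ term by term for finitely supported $k$, and then passing to general $k$ by continuity of the sesquilinear form $\mathop{\text{\rm Lim}}_n(T^n u,T^n v)$---is more laborious but entirely self-contained and does not require \cite{ker89} beyond the definition of the asymptote. Both arguments ultimately rest on the same ingredient, namely that the $H^2_-$ component $S_*^n k$ dies and the asymptotics are governed by $T_0$ acting on vectors in $\mathcal N$.
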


\begin{proof} The equality \eqref{yy0oplus} easy follows from the definition of $T$. Let $p$ be an (analytic) polynomial. 
It easy follows from \eqref{yy0oplus} that 
\begin{equation}\label{ptt} P_{\mathcal N}p(T)|_{H^2_-}=Y_0P_{H^2}p(U_{\mathbb T})|_{H^2_-}.\end{equation}  
The conclusions on power boundedness  and polynomial boundedness of $T$ follow from \eqref{ptt}. Since 
the right part of \eqref{ptt} is defined for every $p\in H^\infty$, the conclusion 
on a.c. polynomial boundedness of $T$ follows from \eqref{ptt} and  {\cite[Theorem 23]{ker16}}.

Let $x_1$, $x_2\in\mathcal N$,  let $h_1$, $h_2\in H^2_-$, and let $n\in\mathbb N$. It follows from  \eqref{ptt} that
\begin{align*} ((T^*)^n & (x_1\oplus h_1), (T^*)^n(x_2\oplus h_2))\\ & 
= (h_1,h_2)+(P_{H^2_-}\overline\chi^n Y_0^*x_1,P_{H^2_-}\overline\chi^n Y_0^*x_2)
+((T_0^*)^nx_1,(T_0^*)^nx_2).\end{align*}
 We have  
$$\lim_n(P_{H^2_-}\overline\chi^n Y_0^*x_1,P_{H^2_-}\overline\chi^n Y_0^*x_2)=(Y_0^*x_1,Y_0^*x_2).$$
 By the construction of $Z_0$,  $$(Z_0x_1,Z_0x_2)=\mathop{\text{\rm Lim}}_n((T_0^*)^nx_1,(T_0^*)^nx_2),$$
and by the construction of $X_*$, 
 $$(X_*(x_1\oplus h_1),X_*(x_2\oplus h_2))=\mathop{\text{\rm Lim}}_n((T^*)^n(x_1\oplus h_1), (T^*)^n(x_2\oplus h_2)).$$
Therefore, $$(X_*(x_1\oplus h_1),X_*(x_2\oplus h_2))=(h_1,h_2)+(Y_0^*x_1,Y_0^*x_2)+ (Z_0x_1,Z_0x_2).$$
From the latest equality and  {\cite[Theorem 3]{ker89}} we obtain the conclusion on the unitary asymptote of $T^*$.

Suppose that $(X_0,U_{\mathbb T})$ is the unitary asymptote of $T_0$ constructed using some  Banach limit, and $X$ is defined as above.
Clearly, $XT=U_{\mathbb T}X$. 
Denote by $X_T$ the canonical intertwining mapping for $T$ and $T^{(a)}$ constructed using the same Banach limit as $X_0$. 
By  {\cite[Theorem 3]{ker89}}, $T^{(a)}\cong (T_0)^{(a)}$ and $X_0=X_T|_{\mathcal N}$. 
Therefore, $T^{(a)}\cong U_{\mathbb T}$, and it is easy to see from the definitions of $T$ and $X$ 
and the intertwining properties of $X$ and $X_T$ that $X_T=X$.
\end{proof}

\section{Proof of Theorem \ref{thmmain}}

For convenience of applying  Proposition  \ref{proptt0tt}, we denote the operator from Theorem \ref{thmmain} 
by $T_0$. 

Suppose that  $\mathcal N$ is a Hilbert space, $T_0\in\mathcal L(\mathcal N)$ is a  power bounded operator of class $C_{1\cdot}$,
and $(T_0)^{(a)}\cong U_{\mathbb T}$. Let 
$$T_0=\begin{pmatrix}T_{11} & * \\ \mathbb O & T_{\cdot 0}\end{pmatrix}$$
be the triangulation of $T_0$ of the form $\begin{pmatrix}C_{\cdot 1} & * \\ \mathbb O & C_{\cdot 0}\end{pmatrix}$, 
see Introduction and \cite{ker89}. 
Since $T_0$ is  of class $C_{1\cdot}$, we have $T_{11}$ is of class $C_{11}$.   
By  {\cite[Theorem 3]{ker89}} and  {\cite[Propositions II.3.4 and II.5.3]{sfbk}}, 
$T_{11}\sim (T_{11})^{(a)}$, $(T_0^*)^{(a)}\cong ((T_{11})^{(a)})^*$, 
and there exists a measurable set  $\tau\subset\mathbb T$ such that $(T_{11})^{(a)}\cong U_\tau$. 

{\bf Two cases are possible.}  \emph{ First case:} $m(\tau)=1$.  Then $T_{11}\sim U_{\mathbb T}$. Suppose that $T_0$ is  polynomially bounded. We can consider $T_{11}$ instead of $T_0$. The proof becomes simpler. Proposition \ref{proptt0tt} is not used, because we need not to  represent $T_{11}$ as a restriction of some operator of class $C_{\cdot 1}$ on its invariant subspace $\mathcal N$. 
Lemma \ref{lemmin} is not used, because we need not to request that the needed shift-type invariant subspace is contained in the given subspace  $\mathcal N$. The  detailed proof can be deduced from the proof of second case and is not given here.
 
\emph{ Second  case:} $ m(\tau)<1$. 

Suppose that $T_0$ is a cyclic a.c. polynomially bounded operator. By Corollary \ref{tha},  
there exists a quasiaffinity  $Y_0\in\mathcal L(H^2,\mathcal N)$ such that $Y_0 S = T_0Y_0$. 
Denote by $X_0$ the canonical intertwining mapping for $T_0$ and  $U_{\mathbb T}$ 
constructed using a Banach limit. We have 
\begin{equation}\label{xx0}X_0T_0=U_{\mathbb T}X_0 \ \ \text{ and } \ \ \bigvee_{n\geq 0}U_{\mathbb T}^{-n}X_0\mathcal N=L^2(\mathbb T).\end{equation} 
Set $\mathcal H=\mathcal N\oplus H^2_-$. Let $T\in\mathcal L(\mathcal H)$, 
$X\in\mathcal L(\mathcal H, L^2(\mathbb T))$, $g_1\in L^\infty(\mathbb T)$ be    from 
Proposition \ref{proptt0tt} applied to $T_0$ and $Y_0$. Since $Y_0$ is a quasiaffinity, it follows from the second equality in \eqref{xx0} that 
   $g_1\neq 0$ a.e. on $\mathbb T$, and  it follows from \eqref{yy0oplus} that $T$ is of class $C_{\cdot 1}$. 

Let $X_*\in\mathcal L(\mathcal H, L^2(\tau)\oplus L^2(\mathbb T))$ be the canonical intertwining mapping for $T^*$ 
constructed in Proposition \ref{proptt0tt} with $V_0=U_\tau^*$. Then  
\begin{equation}\label{xxstarnn} X_*^*(L^2(\tau)\oplus H^2)\subset\mathcal N.\end{equation}
Furthermore,  there exists $g_2\in L^\infty(\tau)$ such that $$(XX_*^*)(h_2\oplus h_1) = g_2h_2+g_1h_1 \ \ \text{ for every } 
h_2\in L^2(\tau)  \text{ and } h_1 \in L^2(\mathbb T).$$
Set $$ \theta_1=\frac{g_1}{(|g_1|^2+|g_2|^2)^{1/2}} \ \ \text{ and } \ \ \theta_2=\frac{g_2}{(|g_1|^2+|g_2|^2)^{1/2}},$$
and set $$ \mathcal E_0=\{ \theta_1 h\oplus(-\theta_2h)\ :\ h\in L^2(\tau)\}.$$
Clearly, $\mathcal E_0=\ker XX_*^*$ and  $\mathcal E_0$ is a reducing subspace for $U_\tau\oplus U_{\mathbb T}$,  
that is, 
$$\mathcal E_0\in\operatorname{Lat}(U_\tau\oplus U_{\mathbb T}) \text{ and } 
(L^2(\tau)\oplus L^2(\mathbb T))\ominus \mathcal E_0\in\operatorname{Lat}(U_\tau\oplus U_{\mathbb T}).$$
 Define 
$$J\in\mathcal L\bigl(L^2(\mathbb T),(L^2(\tau)\oplus L^2(\mathbb T))\ominus \mathcal E_0\bigr)$$
by the formula 
$$Jh=\overline\theta_2 h|_\tau\oplus (\overline\theta_1 h|_\tau + h|_{\mathbb T\setminus\tau}) \ \ \ (h\in L^2(\mathbb T)).$$
Set \begin{equation}\label{gnew} g=\begin{cases}(|g_1|^2+|g_2|^2)^{1/2} & \text{ on } \tau, \\
g_1 & \text{ on } \mathbb T\setminus\tau.\end{cases}\end{equation}
 Then $g\neq 0$ a.e. on $\mathbb T$, $J$ is unitary, $JU_{\mathbb T}=(U_\tau\oplus U_{\mathbb T})|_{(L^2(\tau)\oplus L^2(\mathbb T))\ominus \mathcal E_0}J$, 
\begin{equation}\label{xxxxstarjj}  XX_*^*J h=gh \ \text{ for every } h\in L^2(\mathbb T) \ \text{ and } \ L^2(\tau)\oplus L^2(\mathbb T)= JL^2(\mathbb T)\oplus \mathcal E_0.\end{equation}

Take $\sigma\subset \mathbb T$ such that  
\begin{equation}\label{vyborsigma}K C_{{\rm pol}, T} \sqrt 2\|X_*\|\mathop{\text{\rm ess\,sup}}_{\sigma}|g| < 1, \ \ 
\mathop{\text{\rm ess\,inf}}_{\mathbb T\setminus\sigma}|g| > 0 \ \text{ and } m(\tau\cup\sigma)<1.\end{equation}

Let $h_0\in H^2$ be such that 
\begin{equation}\label{choiseh0}|h_0|\asymp 1 \ \text{  a.e. on } \mathbb T.\end{equation}
 There exists $\theta_0\in L^\infty(\tau)$ such that $|\theta_0|=1$ a.e. on $\tau$ and 
\begin{equation}\label{real}\operatorname{Re} \theta_1\overline\theta_2\overline\theta_0 =0  \ \text{ a.e. on  } \tau.\end{equation}
Set 
\begin{equation}\label{h01new}
h_{01}=(\theta_2\theta_0+\theta_1)h_0\chi_\tau + h_0\chi_{\mathbb T\setminus\tau}\  \ \ \text{ and }\ \  \ 
 h_{02}= (-\overline\theta_2+\overline\theta_1\theta_0)h_0|_\tau. \end{equation}
By \eqref{real}, 
\begin{equation}\label{mod}|h_{01}|=|h_0| \ \text{ a.e. on } \mathbb T.\end{equation}

Set \begin{equation}\label{ffsigma} \begin{aligned} 
 F_{\tau,\sigma}&=\overline\theta_2h_{01}|_{\tau\cap\sigma}+ \theta_1 h_{02}, \\
 F_{\mathbb T,\sigma}&=\overline\theta_1h_{01}|_{\tau\cap\sigma}+h_{01}|_{\sigma\setminus\tau}
-\theta_2 h_{02}, \\ 
F_{\tau,\mathbb T\setminus\sigma}&=\overline\theta_2h_{01}|_{\tau\setminus\sigma}, \\
F_{\mathbb T,\mathbb T\setminus\sigma}&=\overline\theta_1h_{01}|_{\tau\setminus\sigma}+h_{01}|_{(\mathbb T\setminus\tau)\setminus\sigma}.
\end{aligned}\end{equation}
Then
\begin{equation}\label{ffjjsigma} F_{\tau,\sigma}\oplus F_{\mathbb T,\sigma} = J h_{01}|_\sigma + \theta_1h_{02}\oplus(-\theta_2h_{02}), 
\ \ \ \ \theta_1h_{02}\oplus(-\theta_2h_{02})\in\mathcal E_0,\end{equation}
\begin{equation}\label{ffjjsetminussigma}
F_{\tau,\mathbb T\setminus\sigma}\oplus F_{\mathbb T,\mathbb T\setminus\sigma} = J h_{01}|_{\mathbb T\setminus\sigma},\end{equation}
and \begin{equation}\label{ffh0}
F_{\tau,\sigma}\oplus F_{\mathbb T,\sigma} + F_{\tau,\mathbb T\setminus\sigma}\oplus F_{\mathbb T,\mathbb T\setminus\sigma}=
\theta_0 h_0|_\tau\oplus h_0\in L^2(\tau)\oplus H^2.\end{equation}
Put \begin{equation}\label{x0ppsigma} 
 x_0  =X_*^*(F_{\tau,\sigma}\oplus F_{\mathbb T,\sigma}).
\end{equation}
It follows from \eqref{ffjjsigma} and \eqref{xxxxstarjj} that 
 \begin{equation}\label{xxpp} X x_0=gh_{01}|_\sigma.
 \end{equation}

Set $$\mathcal M_0=\bigvee_{k\geq 0}T^k x_0 \ \ \text{ and } \ \  
\mathcal K_0= \bigvee_{k\geq 0}(U_\tau\oplus U_{\mathbb T})^k(F_{\tau,\sigma}\oplus F_{\mathbb T,\sigma}). $$
By \eqref{ffsigma}, $F_{\tau,\sigma}\oplus F_{\mathbb T,\sigma}\in L^2(\tau)\oplus  L^2(\tau\cup\sigma)$. 
It follows from third inequality in \eqref{vyborsigma} that $(U_\tau\oplus U_{\mathbb T})|_{\mathcal K_0}$ is unitary. 
It follows from \eqref{choiseh0}, \eqref{mod},   \eqref{xxpp}, and the equality $XT=U_{\mathbb T}X$ that 
$$\operatorname{clos}X\mathcal M_0 = L^2(\sigma).$$
By  {\cite[Theorem 3]{ker89}}, $(X|_{\mathcal M_0}, U_\sigma)$ is the unitary asymptote of $T|_{\mathcal M_0}$ constructed using the same Banach limit as $X$. 

Set \begin{equation}\label{ll1def}\mathcal L=\bigvee_{n\geq 0}(U_\tau^*\oplus U_{\mathbb T}^*)^{-n}X_*\mathcal M_0^\perp.\end{equation} 
Denote by $Z$ the canonical intertwining mapping for $(T|_{\mathcal M_0})^*$ constructed using the same Banach limit as  $X_*$.
Let $B$ be the transformation from Lemma \ref{ker89thm3} applied to $T$ and $\mathcal M_0$. 
We have 
$$ X|_{\mathcal M_0}Z^*=XP_{\mathcal M_0}X_*^*P_{\mathcal L^\perp}B^*.$$
Therefore, 
$$\|X|_{\mathcal M_0}Z^*\|\leq\sqrt 2\|X_*\|\|XP_{\mathcal M_0}X_*^*|_{\mathcal L^\perp}\|.$$
It follows from \eqref{ll1def} that $ X_*^*\mathcal L^\perp\subset\mathcal M_0$.
Therefore, \begin{equation}\label{xxstarll1}XP_{\mathcal M_0}X_*^*|_{\mathcal L^\perp}=XX_*^*|_{\mathcal L^\perp}\ \ \text{ and } \ \  
 XX_*^*\mathcal L^\perp\subset L^2(\sigma).\end{equation}
 Let $u\in\mathcal L^\perp$. By \eqref{xxxxstarjj},  there exist $h\in L^2(\mathbb T)$ and $v\in \mathcal E_0$ 
such that $u = Jh + v$. By \eqref{xxxxstarjj} and \eqref{xxstarll1},
$$ XX_*^* u =XX_*^* J h=gh\in L^2(\sigma).$$
Therefore, $h|_{\mathbb T\setminus\sigma} =0$ and 
\begin{align*}\| XX_*^* u\| & \leq \mathop{\text{\rm ess\,sup}}_{\sigma}|g|\|h\|_{L^2(\sigma)}\leq
\mathop{\text{\rm ess\,sup}}_{\sigma}|g|(\|h\|_{L^2(\mathbb T)}^2 + \|v\|^2)^{1/2}  \\ &=
\mathop{\text{\rm ess\,sup}}_{\sigma}|g|\|u\|\end{align*}
(because $J$ is unitary). We obtain that 
$$ \|XX_*^*|_{\mathcal L^\perp}\|\leq\mathop{\text{\rm ess\,sup}}_{\sigma}|g|.$$
Consequently, $$\|X|_{\mathcal M_0}Z^*\|\leq\sqrt 2\|X_*\|\mathop{\text{\rm ess\,sup}}_{\sigma}|g|.$$
By \eqref{vyborsigma},  $$KC_{{\rm pol}, T}\|X|_{\mathcal M_0}Z^*\|<1.$$ 

Take $$KC_{{\rm pol}, T}\|X|_{\mathcal M_0}Z^*\|<\delta_0<1 \  \text{ and } \  0<\varepsilon_{00},\gamma_0<1.$$ 
Let $\{\varphi_n\}_{n\in\mathbb N}\subset H^\infty$ 
 be  such that $\{\varphi_n(T) x_0\}_{n\in\mathbb N}$ is dense in $\mathcal M_0$. 
Let $\psi_n$ be constructed by Corollary \ref{tha} applied to $\varphi_n(T) x_0$ for every $n\in\mathbb N$. Set 
\begin{equation}\label{fdef} f=X x_0. \end{equation}
It is easy to see that $T|_{\mathcal M_0}$, $X|_{\mathcal M_0}$, $(U_\tau\oplus U_{\mathbb T})|_{\mathcal K_0}$, 
$X_*^*|_{\mathcal K_0}$, $x_0$, $\{\psi_n\}_{n\in\mathbb N}$ satisfy   Proposition \ref{proptau2}.
Let $0<\varepsilon_0\leq\varepsilon_{00}$ be 
such that $$m(\tau_{\varepsilon_0,\delta_0,\gamma_0})=0,$$
where $\tau_{\varepsilon_0,\delta_0,\gamma_0}$ is from  Proposition \ref{proptau2}.   

 For convenience, relabel $\{\varphi_n\}_{n\in\mathbb N_{\varepsilon_0,\delta_0}}$ 
as $\{\varphi_n\}_{n\in\mathbb N}$. 
Then $$\|X\varphi_n(T) x_0\|\geq\delta_0 \|\varphi_n(T) x_0\|\ \ \text{  for every } n\in\mathbb N.$$ 
By Lemma \ref{lemast}  applied to $T|_{\mathcal M_0}$ and the choice of $\delta_0$ we have 
\begin{equation}\label{lognew}\int_{\mathbb T}\log|\psi_n|\text{\rm d}m>-\infty\ \ \text{  for every } n\in\mathbb N.\end{equation} 
We accept that $\psi_n$ are outer functions (and $\psi_n\in H^2$), see Remark \ref{h2}.
Set $\sigma_n=\sigma_{n,\gamma_0\delta_0}$, $n\in\mathbb N$. 

By \eqref{fdef} and Lemma \ref{lemfpsi} applied to $\varphi_n(T) x_0$, 
\begin{equation}\label{phifleq}\begin{aligned}
|\varphi_n f| & \leq\|X\|KC_{{\rm pol}, T}\|\varphi_n(T) x_0\||\psi_n| \\ &
\leq(1+\varepsilon_0)\|X\|KC_{{\rm pol}, T}|\psi_n| \ \ \text{ a.e. on }\sigma \text{ for every } n\in\mathbb N.\end{aligned}\end{equation}
By Proposition \ref{proptau2}, 
$$|\varphi_nf|>\gamma_0\delta_0\|\varphi_n(T)x\||\psi_n|
\geq (1-\varepsilon_0)\gamma_0\delta_0|\psi_n| \ \ \text{  a.e. on }\ \sigma\setminus\sigma_n $$
 for every $n\in\mathbb N$  
and 
$$ m(\bigcap_{n\in\mathbb N}\sigma_n)=0. $$
Set 
$$ \tau_1=\sigma\setminus\sigma_1, 
\ \  \tau_n=(\sigma\setminus\sigma_n)\setminus\cup_{k=1}^{n-1}\tau_k, \ \ n\geq 2.$$
Then 
$$ \bigcup_{n\in\mathbb N}\tau_n=\sigma, \ \ \ \tau_n\cap\tau_k=\emptyset, \ \text{ if } \ n\neq k, \ n,k\geq 1,$$
and 
\begin{equation}\label{fphigeq} |\varphi_nf|>(1-\varepsilon_0)\gamma_0\delta_0|\psi_n| 
\ \text{ a.e. on } \tau_n, \ n\in\mathbb N.
\end{equation}

By \eqref{x0ppsigma},
$$ \varphi(T)\varphi_n(T) x_0  =\varphi(T)\varphi_n(T)X_*^*(F_{\tau,\sigma}\oplus F_{\mathbb T,\sigma})
=X_*^*(\varphi\varphi_nF_{\tau,\sigma}\oplus\varphi\varphi_n F_{\mathbb T,\sigma}).$$
Therefore, 
\begin{align*} \|\varphi(T)\varphi_n(T) x_0\|^2 
& \leq \|X_*^*\|^2(\|\varphi\varphi_nF_{\tau,\sigma}\|^2 + \|\varphi\varphi_n F_{\mathbb T,\sigma}\|^2)\\&=
\|X_*^*\|^2\Bigl(\int_\tau|\varphi\varphi_nF_{\tau,\sigma}|^2 \text{\rm d}m+ 
\int_{\mathbb T}|\varphi\varphi_n F_{\mathbb T,\sigma}|^2\text{\rm d}m\Bigr) \\ & = 
\|X_*^*\|^2\int_{\mathbb T}|\varphi\varphi_n|^2(\chi_\tau|F_{\tau,\sigma}|^2 + |F_{\mathbb T,\sigma}|^2)\text{\rm d}m \\ &
\leq \|X_*^*\|^2\mathop{\text{\rm ess\,sup}}_{\mathbb T}(\chi_\tau|F_{\tau,\sigma}|^2 + 
|F_{\mathbb T,\sigma}|^2)\int_{\mathbb T}|\varphi\varphi_n|^2\text{\rm d}m.
\end{align*}
Easy calculation show that 
$$\mathop{\text{\rm ess\,sup}}_{\mathbb T}(\chi_\tau|F_{\tau,\sigma}|^2 + 
|F_{\mathbb T,\sigma}|^2)=\max(2\mathop{\text{\rm ess\,sup}}_{\tau\cap\sigma}|h_0|^2, 
\mathop{\text{\rm ess\,sup}}_{(\tau\setminus\sigma)\cup(\sigma\setminus\tau)}|h_0|^2).$$
Set $$C_1=\frac{\|X_*^*\|\bigl(\max(2\mathop{\text{\rm ess\,sup}}_{\tau\cap\sigma}|h_0|^2, 
\mathop{\text{\rm ess\,sup}}_{(\tau\setminus\sigma)\cup(\sigma\setminus\tau)}|h_0|^2)\bigr)^{1/2}}{KC_{{\rm pol}, T}(1+\varepsilon_0)}.$$
Then 
$$\|\varphi(T)\varphi_n(T) x_0\|\leq KC_{{\rm pol}, T}(1+\varepsilon_0)
\Bigl(\int_{\mathbb T}|\varphi|^2C_1^2|\varphi_n|^2\text{\rm d}m\Bigr)^{1/2}$$ 
for every $\varphi\in H^\infty$ and every $n\in\mathbb N$.

Take $0<c_1<1$. By Lemma \ref{lemmin}, there exist inner functions $\omega_n\in H^\infty$, $n\in\mathbb N$, such that 
\begin{align*}\|\varphi(T) & \omega_n(T) \varphi_n(T) x_0\| \\&
\leq KC_{{\rm pol}, T}(1+\varepsilon_0)\frac{(2(1+c_1^2))^{1/2}}{1-c_1}
\Bigl(\int_{\mathbb T}|\varphi|^2\min(|\psi_n|, C_1|\varphi_n|)^2\text{\rm d}m\Bigr)^{1/2}\end{align*}
for every $\varphi\in H^\infty$ and every $n\in\mathbb N$.

There exist outer functions $\psi_{1n}\in H^2$ such that  
 \begin{equation}\label{psi1ndef} |\psi_{1n}|=\begin{cases}|\psi_n| & \text{ on } \sigma, \\
\min(|\psi_n|, C_1|\varphi_n|) & \text{ on }  \mathbb T\setminus\sigma.\end{cases}\end{equation}
Since $\min(|\psi_n|, C_1|\varphi_n|)\leq |\psi_{1n}|$ a.e. on $\mathbb T$, we have
\begin{equation}\label{w1n}\begin{aligned}\|\varphi(T)\omega_n(T)\varphi_n(T) x_0\| 
\leq &  KC_{{\rm pol}, T}(1+\varepsilon_0)\frac{(2(1+c_1^2))^{1/2}}{1-c_1}
\Bigl(\int_{\mathbb T}|\varphi|^2|\psi_{1n}|^2\text{\rm d}m\Bigr)^{1/2}\\  & \text{ for every } \ \varphi\in H^\infty
\text{ and  every } \ n\in\mathbb N.\end{aligned}\end{equation}
It follows from \eqref{psi1ndef} that  \begin{equation}\label{phipsin}\Bigl|\frac{\varphi_n}{\psi_{1n}}\Bigr|\geq\frac{1}{C_1} 
\ \ \ \text{ on }  \mathbb T\setminus\sigma \text{ for every } n\in\mathbb N.\end{equation} 

Let $\{\xi_n\}_{n\in\mathbb N}$ be a sequence of positive numbers such that 
\begin{equation}\label{xi2} \xi:=\sum_{n\in\mathbb N}\xi_n^2 <\infty, \ \ \ \ \ \ \sum_{n\geq 2}\xi_n^2 <1
\end{equation} and 
\begin{equation}\label{ximinus}
 (1+\varepsilon_0)\|X\|KC_{{\rm pol}, T}\xi <(1-\varepsilon_0)\gamma_0\delta_0.\end{equation}

For $n\in\mathbb N$, define  outer functions $\eta_n\in H^\infty$ as follows:
\begin{equation}\label{etadef}
|\eta_1|  =\begin{cases}\Bigl|\frac{\psi_{11}}{\varphi_1}\Bigr|^{1/2}  & \text{ on } \mathbb T\setminus\sigma, \\
1  & \text{ on } \tau_1, \\ \xi_1 & \text{ on } \sigma\setminus\tau_ 1,\end{cases} \ \ \ \ 
|\eta_n|  =\begin{cases}\xi_n\Bigl|\frac{\psi_{1n}}{\varphi_n}\Bigr|^{1/2}  & \text{ on } \mathbb T\setminus\sigma, \\ 
1  & \text{ on } \tau_n, \\
\xi_n & \text{ on } \sigma\setminus\tau_ n,\end{cases} n\geq 2.
\end{equation}
By  \eqref{phipsin} and \eqref{xi2}, 
$$ \sum_{n\in\mathbb N} |\eta_n(\zeta)|^2\leq \begin{cases} (1+\sum_{n\geq 2}\xi_n^2)C_1 & \text{ for a.e.  }\zeta\in \mathbb T\setminus\sigma, \\
1+\xi  & \text{ for a.e.  }\zeta\in\sigma.\end{cases}$$

For every $t>0$ and every $n\in\mathbb N$ let $\psi_{nt}$ be the outer function such that
 \begin{equation}\label{psintdef} |\psi_{nt}|=\max(|\psi_{1n}|,t) \ \ \text{ a.e. on } \mathbb T. \end{equation}

Put  \begin{equation}\label{kappa}
\kappa_{nt}(\zeta)=\frac{\eta_n(\zeta)}{\psi_{nt}(\zeta)}, \ \ \ \zeta\in\mathbb T, \ n\in\mathbb N, \  t>0.
\end{equation} 
Since $|\psi_{nt}|\geq t$ a.e. on $\mathbb T$, we have $\kappa_{nt}\in H^\infty$ for every $n\in \mathbb N$ and $t>0$. 
Set  
 \begin{equation}\label{xynt}\begin{aligned}
x_{nt} & =(\kappa_{nt}\omega_n\varphi_n)(T)x_0=(\kappa_{nt}\omega_n\varphi_n)(T)X_*^*(F_{\tau,\sigma}\oplus F_{\mathbb T,\sigma}) \\ \text{ and } \ 
y_{nt} &=  X_*^*( \kappa_{nt}\omega_n\varphi_nF_{\tau,\mathbb T\setminus\sigma}
\oplus \kappa_{nt}\omega_n\varphi_nF_{\mathbb T,\mathbb T\setminus\sigma}) \\ &= 
(\kappa_{nt}\omega_n\varphi_n)(T)X_*^*(F_{\tau,\mathbb T\setminus\sigma}\oplus F_{\mathbb T,\mathbb T\setminus\sigma}) \\ 
 &\ \ \ \ \  \ \ \ \ \ \ \text{  for every } n\in\mathbb N\ \text{ and }t>0.\end{aligned}\end{equation}
We have 
$$ x_{nt}+y_{nt}=(\kappa_{nt}\omega_n\varphi_n)(T)
X_*^*(F_{\tau,\sigma}\oplus F_{\mathbb T,\sigma} + F_{\tau,\mathbb T\setminus\sigma}\oplus F_{\mathbb T,\mathbb T\setminus\sigma}).$$
By \eqref{ffh0} and \eqref{xxstarnn},
\begin{equation}\label{xynn} x_{nt} + y_{nt}\in\mathcal N.\end{equation} 

By \eqref{w1n} and \eqref{psintdef}, 
 \begin{equation}\label{kappaphi1}\begin{aligned}
\sum_{n\in\mathbb N}\|\varphi(T)x_{nt}\|^2 & 
=\sum_{n\in\mathbb N}\|(\varphi\kappa_{nt}\omega_n\varphi_n)(T)x_0\|^2  
\\ & 
\leq \Bigl(KC_{{\rm pol}, T}(1+\varepsilon_0)\frac{(2(1+c_1^2))^{1/2}}{1-c_1}\Bigr)^2
 \sum_{n\in\mathbb N}\int_{\mathbb T}|\varphi\kappa_{nt}|^2|\psi_{nt}|^2\text{\rm d}m,  \end{aligned}\end{equation}
and by \eqref{kappa},
\begin{equation}\label{kappaphi2}\begin{aligned}
\sum_{n\in\mathbb N}\int_{\mathbb T}|\varphi\kappa_{nt}|^2|\psi_{nt}|^2\text{\rm d}m & = 
\sum_{n\in\mathbb N}\int_{\mathbb T}|\varphi|^2|\eta_n|^2\text{\rm d}m \\ =
\int_{\mathbb T}\sum_{n\in\mathbb N}|\eta_n|^2|\varphi|^2\text{\rm d}m
& \leq\mathop{\text{\rm ess\,sup}}_{\mathbb T}\sum_{n\in\mathbb N}|\eta_n|^2
\int_{\mathbb T}|\varphi|^2\text{\rm d}m.\end{aligned}\end{equation}
By Theorem \ref{thinfinit}, \eqref{kappaphi1} and \eqref{kappaphi2}, 
\begin{equation}\label{wwtsigma}\begin{aligned}\|\sum_{n\in\mathbb N} & (\eta_n\varphi)(T)x_{nt}\| \\ &\leq
K^2C_{{\rm pol}, T}^2(1+\varepsilon_0)\frac{(2(1+c_1^2))^{1/2}}{1-c_1}
\mathop{\text{\rm ess\,sup}}_{\mathbb T}\sum_{n\in\mathbb N}|\eta_n|^2
\Bigl(\int_{\mathbb T}|\varphi|^2\text{\rm d}m\Bigr)^{1/2} 
 \\ &  \ \ \  \ \ \ \ \ \ \ \ \ \ \ \ \ \ \ \ \ \ \ \ \ \  \  \ \ \  \ \ \      \ \text{  for every } t>0 .\end{aligned}\end{equation}

For $t>0$ put 
\begin{equation}\label{beta} 
\beta_t=\sum_{n\in\mathbb N}\eta_n^2\frac{\omega_n\varphi_n}{\psi_{nt}} \ \text{ on } \mathbb T\setminus\sigma.\end{equation}
 It follows from \eqref{etadef} and \eqref{psintdef} that the series \eqref{beta} converges in $\|\cdot\|_{L^\infty(\mathbb T\setminus\sigma)}$ 
 and
\begin{equation}\label{betanorm}\|\beta_t\|_{L^\infty(\mathbb T\setminus\sigma)} \leq 1+\sum_{n\geq 2}\xi_n^2
\ \ \  \ \text{ for all } t>0.\end{equation}
 For $t>0$ and $\varphi\in H^\infty$ let
\begin{equation}\label{wwtpsi}W_t\varphi = \sum_{n\in\mathbb N}(\eta_n\varphi)(T)x_{nt} + 
 X_*^*(\beta_t\varphi F_{\tau,\mathbb T\setminus\sigma}\oplus \beta_t\varphi F_{\mathbb T,\mathbb T\setminus\sigma}).\end{equation}
It follows from the convergence in $\|\cdot\|_{L^\infty(\mathbb T\setminus\sigma)}$ of the series \eqref{beta} that 
 \begin{equation}\label{yntconv} \begin{aligned} 
X_*^*(\beta_t\varphi F_{\tau,\mathbb T\setminus\sigma}\oplus & \beta_t\varphi F_{\mathbb T,\mathbb T\setminus\sigma}) \\ &
=\sum_{n\in\mathbb N}X_*^*\Bigl(\eta_n^2\frac{\omega_n\varphi_n}{\psi_{nt}} \varphi F_{\tau,\mathbb T\setminus\sigma}\oplus 
\eta_n^2\frac{\omega_n\varphi_n}{\psi_{nt}} \varphi F_{\mathbb T,\mathbb T\setminus\sigma}\Bigr).
\end{aligned}\end{equation}
It follows from the  intertwining property of $X_*^*$ and \eqref{xynt} that 
\begin{equation}\label{yntequality}X_*^*\Bigl(\eta_n^2\frac{\omega_n\varphi_n}{\psi_{nt}} \varphi F_{\tau,\mathbb T\setminus\sigma}\oplus 
\eta_n^2\frac{\omega_n\varphi_n}{\psi_{nt}} \varphi F_{\mathbb T,\mathbb T\setminus\sigma}\Bigr)=
(\varphi\eta_n)(T)y_{nt}.
\end{equation}
It follows from \eqref{wwtpsi}, \eqref{yntconv}, \eqref{yntequality}, and \eqref{xynn} that 
$$ W_t\varphi\in \mathcal N \text{  for every } \varphi\in H^\infty \text{ and } t>0. $$

By \eqref{betanorm} and \eqref{ffsigma}, 
 \begin{equation}\label{wwtsetminussigma}
\begin{aligned}\|X_*^*&(\beta_t\varphi F_{\tau,\mathbb T\setminus\sigma}\oplus 
\beta_t\varphi F_{\mathbb T,\mathbb T\setminus\sigma})\|^2\\ &
\leq\|X_*^*\|^2\bigl(1+\sum_{n\geq 2}\xi_n^2\bigr)^2 
\mathop{\text{\rm ess\,sup}}_{\mathbb T\setminus\sigma}\bigl(\chi_\tau|F_{\tau,\mathbb T\setminus\sigma}|^2 + 
|F_{\mathbb T,\mathbb T\setminus\sigma}|^2\bigr)
\int_{\mathbb T\setminus\sigma}|\varphi|^2\text{\rm d}m.\end{aligned}\end{equation}
By  \eqref{mod} and \eqref{ffsigma}, 
$$\mathop{\text{\rm ess\,sup}}_{\mathbb T\setminus\sigma}\bigl(\chi_\tau|F_{\tau,\mathbb T\setminus\sigma}|^2 + 
|F_{\mathbb T,\mathbb T\setminus\sigma}|^2\bigr)=
\mathop{\text{\rm ess\,sup}}_{\mathbb T\setminus\sigma}|h_0|^2.$$
Set 
\begin{align*}C_2= & K^2C_{{\rm pol}, T}^2(1+\varepsilon_0)\frac{(2(1+c_1^2))^{1/2}}{1-c_1}
\mathop{\text{\rm ess\,sup}}_{\mathbb T}\sum_{n\in\mathbb N}|\eta_n|^2  \\ & \ \ +
\|X_*^*\|\bigl(1+\sum_{n\geq 2}\xi_n^2\bigr) 
\mathop{\text{\rm ess\,sup}}_{\mathbb T\setminus\sigma}|h_0|.\end{align*}
It follows from  \eqref{wwtsigma} and \eqref{wwtsetminussigma} 
that the mapping $W_t$ defined on $H^\infty$   by \eqref{wwtpsi}
is extended onto $H^2$ for every $t>0$. Denoting the extension by the same letter, we obtain 
\begin{equation}\label{wwt}\begin{aligned}& W_t  \in\mathcal L(H^2, \mathcal N) \ \text{ such that } 
 W_tS=T|_{\mathcal N}W_t \\  &\text{ and } \
\|W_t\| \leq C_2   \ \text{ for every }  t>0.\end{aligned}\end{equation}

Put 
\begin{equation}\label{alphanew} 
\alpha_t=\sum_{n\in\mathbb N}\eta_n^2\frac{\omega_n\varphi_n g h_{01}}{\psi_{nt}} \ \ \text{ and } \ \ 
\alpha=\sum_{n\in\mathbb N}\eta_n^2\frac{\omega_n\varphi_n g h_{01}}{\psi_{1n}} \ \ \text{ on } \mathbb T.
\end{equation}
By \eqref{wwtpsi},   \eqref{xynt},
 \eqref{kappa},  \eqref{ffjjsigma},  \eqref{ffjjsetminussigma}, \eqref{xxxxstarjj}, \eqref{beta},
\begin{equation}\label{xwtalpha} XW_th=\alpha_t h \ \  \ \text{ for every }  t>0\ \text{ and } \ h\in H^2.\end{equation}
By  \eqref{etadef}, 
\eqref{phifleq}, \eqref{psi1ndef},  
 \eqref{fdef}, \eqref{xxpp}, \eqref{psintdef}  and 
Lemma \ref{lempsit}, $\alpha_t$ and $\alpha$ satisfy 
 Proposition \ref{propalphasigma}.
By Proposition \ref{propalphasigma}, 
\begin{equation}\label{alphalim} \sup_{t>0}\|\alpha_t\|_{L^\infty(\sigma)}<\infty\ \text{ and } \ \|\alpha_t-\alpha\|_{L^2(\sigma)}\to 0
\ \text{  when } t\to 0.\end{equation}
An analog of \eqref{alphalim} for $\mathbb T\setminus\sigma$ follows from  \eqref{psintdef}, \eqref{etadef}, 
\eqref{vyborsigma}, \eqref{choiseh0}, \eqref{mod} and 
Lemma \ref{lempsit}. Therefore, 
\begin{equation}\label{alphalimtt} \sup_{t>0}\|\alpha_t\|_{L^\infty(\mathbb T)}<\infty\ \text{ and } \ \|\alpha_t-\alpha\|_{L^2(\mathbb T)}\to 0
\ \text{  when } t\to 0.\end{equation}

It follows from \eqref{wwt} that there exist a sequence $\{t_j\}_j$ and $ W\in\mathcal L(H^2, \mathcal N) $ such that 
$t_j\mathop{\to}_j  0$ 
and $W_{t_j}\mathop{\to}_j W$ in the  weak operator topology, $WS=T|_{\mathcal N}W$ and $\|W\|\leq C_2$. It follows from \eqref{xwtalpha} and \eqref{alphalimtt} that 
\begin{equation}\label{xwalpha}  XWh=\alpha h  \ \  \ \text{ for every }\ h\in H^2.\end{equation}

We will show that  \begin{equation}\label{alphainvtt}1/\alpha\in L^\infty(\mathbb T). \end{equation}
We have $\sigma=\cup_{n\in\mathbb N}\tau_n$.  
Fix $n\in\mathbb N$. By \eqref{phifleq}, \eqref{psi1ndef}, \eqref{xi2}, \eqref{etadef}, 
taking into account that $f=g h_{01}$ a.e. on $\sigma$ by \eqref{fdef} and \eqref{xxpp}, we  obtain 
\begin{align*}\Bigl|\sum_{k\in\mathbb N, k\neq n}\eta_k^2\frac{\omega_k\varphi_k f}{\psi_{1k}}\Bigr| & \leq
\sum_{k\in\mathbb N, k\neq n}|\eta_k|^2\frac{|\varphi_kf|}{|\psi_k|}\leq
(1+\varepsilon_0)\|X\|KC_{{\rm pol}, T}\!\!\sum_{k\in\mathbb N, k\neq n}\xi_k^2 \\ &
\leq(1+\varepsilon_0)\|X\|KC_{{\rm pol}, T}\xi
\ \ \text{ a.e. on }\tau_n.\end{align*}
From the latest estimate, \eqref{etadef} and \eqref{fphigeq} we conclude that 
\begin{align*}|\alpha| &\geq \frac{|\varphi_nf|}{|\psi_n|}-
\Bigl|\sum_{k\in\mathbb N, k\neq n}\eta_k^2\frac{\varphi_k f}{\psi_k}\Bigr| \\ &
\geq (1-\varepsilon_0)\gamma_0\delta_0 - (1+\varepsilon_0)\|X\|KC_{{\rm pol}, T}\xi
\ \text{ a.e. on } \tau_n.\end{align*}
It follows from \eqref{ximinus} that 
\begin{equation}\label{alphainvsigma}1/\alpha\in L^\infty(\sigma). \end{equation}
It follows from \eqref{etadef} that 
$$\Bigl|\sum_{n\in\mathbb N}\eta_n^2\frac{\omega_n\varphi_n}{\psi_{1n}} \Bigr| 
\geq \Bigl|\eta_1^2\frac{\omega_1\varphi_1}{\psi_{11}}\Bigr| - 
\sum_{n\geq 2}\Bigl|\eta_n^2\frac{\omega_n\varphi_n }{\psi_{1n}}\Bigr| = 
1-\sum_{n\geq 2}\xi_n^2\ \ \ \text{ on } \ \mathbb T\setminus\sigma.$$
It follows from \eqref{xi2}, \eqref{choiseh0}, \eqref{mod} and the second inequality in \eqref{vyborsigma}  
that \begin{equation}\label{alphainvsmsigma}1/\alpha\in L^\infty(\mathbb T\setminus\sigma). \end{equation}
Now \eqref{alphainvtt} follows from \eqref{alphainvsigma} and \eqref{alphainvsmsigma}.

Define $ A\in\mathcal L( L^2(\mathbb T))$ by the formula $ Ah=h/\alpha $, $ h\in L^2(\mathbb T)$.
Set $$Z = AX.$$ It follows from \eqref{xwalpha} that 
$$ ZWh=h \ \ \ \ \ \text{for every } h\in H^2.$$ Set $$\mathcal M=W H^2.$$ Then $\mathcal M\subset\mathcal N$ and
 $T|_{\mathcal M}\approx S$. The conclusion of Theorem \ref{thmmain} for $T_0$ follows from the relation $T_0=T|_{\mathcal N}$.

\section{Proof of Corollary \ref{cormain}}

\begin{lemma}\label{lemquasi} Suppose that $T\in\mathcal L(\mathcal H)$ is an  a.c. polynomially bounded operator, 
$X\in\mathcal L(\mathcal H, H^2)$, $Y\in\mathcal L(H^2,\mathcal H)$, $XT=SX$, $YS=TY$, 
$$\operatorname{clos}X\mathcal H=H^2 \ \ \text{ and } \ \ \operatorname{clos}YH^2=\mathcal H.$$
Then $\ker X=\{0\}$ and $\ker Y=\{0\}$.\end{lemma}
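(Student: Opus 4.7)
The plan is to find an outer function $\varphi \in H^\infty$ such that $XY = \varphi(S)$, to promote this via cyclicity of $T$ to the identity $YX = \varphi(T)$, and then to reduce both injectivity claims to the injectivity of multiplication by $\varphi$.

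First, $XY \in \mathcal{L}(H^2)$ commutes with $S$, because $(XY)S = X(YS) = X(TY) = (XT)Y = S(XY)$, so by the classical description of the commutant of $S$ there is $\varphi \in H^\infty$ with $XY = \varphi(S)$. The density of $X\mathcal{H}$ in $H^2$ and of $YH^2$ in $\mathcal{H}$ forces $XY$ to have dense range, so $\varphi H^2$ is dense in $H^2$ and $\varphi$ is outer. Multiplication by a nonzero $H^\infty$-function is injective, hence $\ker Y \subset \ker(XY) = \{0\}$, settling one half of the lemma. Moreover $T$ is cyclic with cyclic vector $x_0 := Y\mathbf{1}$: the orbit $\{T^n x_0\}_{n\geq 0} = \{Yz^n\}_{n\geq 0}$ has span dense in $\operatorname{clos}YH^2 = \mathcal{H}$.

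Both $YX$ and $\varphi(T)$ lie in $\{T\}'$, and
$$YX\,x_0 = Y(XY)\mathbf{1} = Y\varphi(S)\mathbf{1} = Y\varphi = \varphi(T)Y\mathbf{1} = \varphi(T)\,x_0,$$
where the identity $Y\varphi(S) = \varphi(T)Y$ follows by passing the polynomial intertwining $Yp(S) = p(T)Y$ to the weak-$*$ limit, which is legitimate because the $H^\infty$-calculus of the a.c.\ polynomially bounded operator $T$ is weak-$*$-to-WOT continuous on bounded sets. Two operators in $\{T\}'$ that agree on a cyclic vector coincide on the dense subspace $\{p(T)x_0 : p \text{ polynomial}\}$ and hence everywhere, so $YX = \varphi(T)$. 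Combined with $\ker Y = \{0\}$, this gives $\ker X = \ker(YX) = \ker\varphi(T)$, and the proof reduces to showing that $\varphi(T)$ is injective.

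For this I would invoke Lemma \ref{lempsit} with $\psi = \varphi$: letting $\varphi_t$ be the outer function with $|\varphi_t| = \max(|\varphi|, t)$ and setting $g_t := 1/\varphi_t \in H^\infty$, the product $\varphi g_t = \varphi/\varphi_t$ satisfies $|\varphi g_t| \leq 1$ a.e.\ on $\mathbb{T}$ and converges to $\mathbf{1}$ in $H^2$ as $t \to 0^+$; uniform $L^\infty$-boundedness together with $L^2$-convergence gives $\varphi g_t \to \mathbf{1}$ in the weak-$*$ topology of $H^\infty$. By weak-$*$-to-WOT continuity of the $H^\infty$-calculus of $T$, one obtains $g_t(T)\varphi(T) = (\varphi g_t)(T) \to I_{\mathcal{H}}$ in the weak operator topology as $t \to 0^+$. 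For any $x \in \ker\varphi(T)$ the left-hand side is identically zero, so its WOT-limit $x$ must vanish, giving $\ker\varphi(T) = \{0\}$ and hence $\ker X = \{0\}$. The main obstacle in this plan is precisely this last step: converting the function-theoretic statement that $\varphi H^\infty$ is weak-$*$-dense in $H^\infty$ (because $\varphi$ is outer) into the operator identity $\ker\varphi(T) = \{0\}$. This is where the a.c.\ polynomial boundedness of $T$ enters crucially and where Lemma \ref{lempsit} is tailor-made to supply the required approximation.
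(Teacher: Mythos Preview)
Your proof is correct and follows essentially the same route as the paper: identify $XY=\varphi(S)$ with $\varphi$ outer, deduce $YX=\varphi(T)$, and conclude by injectivity of $\varphi(T)$. The paper reaches $YX=\varphi(T)$ a bit more directly via $YXY=Y\varphi(S)=\varphi(T)Y$ and density of the range of $Y$ (bypassing your cyclic-vector step), and it simply cites \cite{mlak} for $\ker\varphi(T)=\{0\}$, whereas you supply a self-contained argument through Lemma~\ref{lempsit} and weak-$*$-to-WOT continuity of the calculus.
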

\begin{proof} Since $XYS=SXY$ and $\operatorname{clos}XYH^2=H^2$, 
there exists an outer function $g\in H^\infty$ such that $XY=g(S)$. 
Therefore, $\ker Y=\{0\}$. Since 
$YXY=Yg(S)=g(T)Y$ and $\operatorname{clos}YH^2=\mathcal H$, we obtain that $YX=g(T)$. 
By \cite{mlak}, $\ker g(T)=\{0\}$. Therefore, $\ker X=\{0\}$. 
\end{proof}

 \begin{lemma} \label{lemcycss} Suppose that $T$ is a cyclic  a.c. polynomially bounded operator such that 
 $T\buildrel d \over\prec S$. 
Then $T\sim S$. 
\end{lemma}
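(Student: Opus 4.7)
My plan is to invoke Lemma~\ref{lemquasi}. The hypothesis $T\buildrel d\over\prec S$ provides $X\in\mathcal L(\mathcal H,H^2)$ with $XT=SX$ and $\operatorname{clos} X\mathcal H=H^2$. I will construct a companion $Y\in\mathcal L(H^2,\mathcal H)$ with $YS=TY$ and $\operatorname{clos} YH^2=\mathcal H$. Lemma~\ref{lemquasi} then forces $\ker X=\ker Y=\{0\}$, so that $T\prec S$ and $S\prec T$, whence $T\sim S$.

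To produce $Y$, fix a cyclic vector $x\in\mathcal H$ for $T$ and apply Corollary~\ref{tha} to obtain $\psi\in L^2(\mathbb T)$ with $\int_{\mathbb T}|\psi|^2\text{\rm d}m\leq 1$ and an operator $W\in\mathcal L(P^2(|\psi|^2 m),\mathcal H)$ satisfying $WS_{|\psi|^2 m}=TW$ and $W\varphi=\varphi(T)x$ for every $\varphi\in H^\infty$; cyclicity of $x$ makes $W$ of dense range. Provided one knows $\int_{\mathbb T}\log|\psi|\text{\rm d}m>-\infty$, Remark~\ref{h2} allows taking $\psi$ outer in $H^2$, so that $P^2(|\psi|^2 m)=H^2/\psi$ and $V\colon H^2\to P^2(|\psi|^2 m)$, $Vh=h/\psi$, is a unitary intertwining $S$ with $S_{|\psi|^2 m}$. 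Setting $Y=WV$ then completes the construction.

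The crux is therefore verifying the log-integrability of $\psi$. Viewing $X$ as a bounded map $\mathcal H\to L^2(\mathbb T)$ and observing that $XT=U_{\mathbb T}X$ (since $S=U_{\mathbb T}|_{H^2}$), Lemma~\ref{lemfpsi} applied with $\sigma=\mathbb T$ yields
$$|Xx|\leq\|X\|KC_{{\rm pol},T}\|x\||\psi|\quad\text{a.e.\ on }\mathbb T.$$
On the other hand, the density of $X\mathcal H$ in $H^2$ combined with the cyclicity of $x$ and the intertwining $X\varphi(T)=\varphi(S)X$ forces the $S$-cyclic subspace of $H^2$ generated by $Xx$ to be all of $H^2$; by Beurling's theorem $Xx$ is outer, so $\log|Xx|\in L^1(\mathbb T)$. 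The pointwise bound above then gives $\log|\psi|\geq\log|Xx|-\log(\|X\|KC_{{\rm pol},T}\|x\|)$ a.e., which is integrable from below, while integrability from above is automatic from $\psi\in L^2(\mathbb T)$. This outer-ness argument is the only genuinely delicate point; once $\log|\psi|\in L^1(\mathbb T)$ is established, the construction of $Y$ and the invocation of Lemma~\ref{lemquasi} are routine.
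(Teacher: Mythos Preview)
Your proof is correct and follows the same strategy as the paper: obtain $S\buildrel d\over\prec T$ from Corollary~\ref{tha} and then invoke Lemma~\ref{lemquasi}. The paper's two-line proof asserts the first step without comment, whereas you supply the justification (via Lemma~\ref{lemfpsi} and the outer-ness of $Xx$) that the function $\psi$ from Corollary~\ref{tha} satisfies $\int_{\mathbb T}\log|\psi|\,\text{\rm d}m>-\infty$, which is exactly what is needed to identify $S_{|\psi|^2 m}$ with $S$.
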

\begin{proof} Since $T$ is cyclic, 
it follows from Corollary \ref{tha} that $S\buildrel d \over\prec T$. By Lemma \ref{lemquasi}, $T\sim S$. \end{proof}

 \begin{lemma} \label{lemcyc} Suppose that $T$ is a cyclic  a.c. polynomially bounded operator such that 
 $T\buildrel d \over\prec U_{\mathbb T}$. Then $T^{(a)}\cong U_{\mathbb T}$.\end{lemma}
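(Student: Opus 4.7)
The plan is to combine Theorem~2 of \cite{ker89} (recalled in the introduction) with the cyclicity and absolute continuity of $T$. Since $T\buildrel d\over\prec U_{\mathbb T}$, that theorem provides a unitary $V$ with $T^{(a)}\cong U_{\mathbb T}\oplus V$, so the task reduces to showing that $V$ acts on the zero space. Because $T$ is cyclic, so is $T^{(a)}$ (see Introduction), hence $U_{\mathbb T}\oplus V$ is cyclic; by a standard fact on cyclic unitary operators this forces $V$ itself to be cyclic and its scalar spectral measure $\nu$ to be mutually singular with respect to $m$. The problem therefore reduces to ruling out any singular-unitary summand, i.e.\ to proving that the unitary asymptote of an a.c.\ polynomially bounded operator has no singular part.

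Suppose toward contradiction that $V\neq\mathbb O$. Using $\nu\perp m$ together with inner regularity of Borel measures on $\mathbb T$, pick a compact $F\subset\mathbb T$ with $m(F)=0$ and $\nu(F)>0$, and invoke Rudin's peak-set theorem to obtain $\varphi\in\mathbf A$ with $\varphi|_F\equiv 1$ and $|\varphi(z)|<1$ for every $z\in\overline{\mathbb D}\setminus F$. Then $|\varphi^n|\leq 1$ on $\overline{\mathbb D}$ while $|\varphi^n|\to 0$ pointwise $m$-a.e.\ on $\mathbb T$. For an arbitrary $x\in\mathcal H$, Corollary~\ref{tha} supplies $\psi_x\in L^2(\mathbb T,m)$ with $\int|\psi_x|^2\text{\rm d}m\leq 1$ and $\|\varphi^n(T)x\|\leq KC_{{\rm pol},T}\|x\|(\int_{\mathbb T}|\varphi^n|^2|\psi_x|^2\text{\rm d}m)^{1/2}$; dominated convergence drives the right-hand side to $0$, so $\varphi^n(T)\to\mathbb O$ in the strong operator topology.

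Because $\varphi$ is a uniform limit of polynomials on $\overline{\mathbb D}$, the intertwining $XT=T^{(a)}X$ extends to $X\varphi^n(T)=\varphi^n(T^{(a)})X$; writing $\pi_V\colon\mathcal H^{(a)}\to\mathcal K_V$ for the orthogonal projection and $\varphi^n(T^{(a)})=\varphi^n(U_{\mathbb T})\oplus\varphi^n(V)$ yields $\varphi^n(V)\pi_VXx\to 0$ for every $x\in\mathcal H$. On the other hand, for any $y\in\mathcal K_V$ the scalar spectral measure $\nu_y$ of $y$ under $V$ satisfies $\|\varphi^n(V)y\|^2=\int|\varphi^n|^2\text{\rm d}\nu_y\geq\nu_y(F)$ since $|\varphi|=1$ on $F$, so $\nu_{\pi_VXx}(F)=0$ for every $x$; equivalently, the spectral projection $E_V(F)$ of $V$ annihilates $\pi_VX\mathcal H$. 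Since $\ker E_V(F)$ is a $V$-reducing closed subspace it must contain $\bigvee_{n\geq 0}V^{-n}\pi_VX\mathcal H$, which equals $\mathcal K_V$ upon projecting \eqref{unitdense} through $\pi_V$. Thus $E_V(F)=\mathbb O$, contradicting $\nu(F)>0$, so $V=\mathbb O$ and $T^{(a)}\cong U_{\mathbb T}$. The crux that carries the real work is invoking Rudin's peak-set theorem together with the extension of the functional calculus from polynomials to $\mathbf A$ (which is routine, since $\varphi_n\to\varphi$ uniformly on $\overline{\mathbb D}$ and $\|p(T)\|\leq C_{{\rm pol},T}\|p\|_\infty$), both standard but packaging the transfer of absolute continuity from $T$ to $T^{(a)}$.
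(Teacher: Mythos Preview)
Your argument is correct, but it takes a longer and more hands-on route than the paper's two-line proof. The paper proceeds exactly as you do in the first paragraph---invoking \cite[Theorem~2]{ker89} to write $T^{(a)}\cong U_{\mathbb T}\oplus V$ and noting that cyclicity of $T$ forces cyclicity of $T^{(a)}$---but then disposes of the singular summand $V$ in one stroke by citing \cite[Proposition~15]{ker16}, which asserts directly that the unitary asymptote of an a.c.\ polynomially bounded operator is itself a.c. Since a cyclic a.c.\ unitary containing $U_{\mathbb T}$ as a summand must equal $U_{\mathbb T}$, the lemma follows.

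What you do instead is essentially reprove (a special case of) that proposition from scratch: you use Rudin's peak-set theorem together with Corollary~\ref{tha} to show that $\varphi^n(T)\to\mathbb O$ strongly whenever $\varphi\in\mathbf A$ peaks on a Lebesgue-null compact set, and then push this through the canonical intertwining to kill the spectral measure of $V$ on every such set. This is a legitimate and self-contained argument---it relies only on tools already established in the paper (Corollary~\ref{tha}) plus a classical fact about the disc algebra---so it has the virtue of not importing \cite[Proposition~15]{ker16} as a black box. The cost is length and a certain amount of machinery (peak sets, spectral projections) that the paper avoids entirely by quoting the literature.
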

\begin{proof}
 Since $T$ is an  a.c. polynomially bounded operator, $T^{(a)}$ is a.c.   by  {\cite[Proposition 15]{ker16}}.  
By   {\cite[Theorem 2]{ker89}}, $T^{(a)}$ contains $U_{\mathbb T}$ as an orthogonal summand. Since 
$T^{(a)}$ is a.c. and cyclic, we conclude that $T^{(a)}\cong U_{\mathbb T}$.
  \end{proof}

 \begin{lemma}\label{proptrian} Suppose that $T\in\mathcal L(\mathcal H)$ is an a.c. polynomially bounded operator, 
 $$T=\begin{pmatrix}T_0 & * \\ \mathbb O & T_1\end{pmatrix}$$
with respect to some decomposition  of $\mathcal H$, 
and there exists
$\mathcal M_1\in\operatorname{Lat}T_1$ such that $T_1|_{\mathcal M_1}\sim S$. Then there exists
$\mathcal M\in\operatorname{Lat}T$ such that $T|_{\mathcal M}\sim S$. 
\end{lemma}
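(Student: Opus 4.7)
The plan is to construct $\mathcal M$ directly by applying Corollary~\ref{tha} to $T$ restricted to a natural enlargement of $\mathcal M_1$, using as cyclic vector the image of $\mathbf 1$ under a quasiaffinity $H^2\to\mathcal M_1$, and then to invoke Lemma~\ref{lemcycss}.

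Since $T_1|_{\mathcal M_1}\sim S$, fix quasiaffinities $X\in\mathcal L(\mathcal M_1,H^2)$ and $Y\in\mathcal L(H^2,\mathcal M_1)$ with $XT_1|_{\mathcal M_1}=SX$ and $YS=T_1|_{\mathcal M_1}Y$. Decompose $\mathcal H=\mathcal H_0\oplus\mathcal H_1$ according to the given triangulation. Then $\mathcal N:=\mathcal H_0\oplus\mathcal M_1\in\operatorname{Lat}T$, with $T|_{\mathcal N}=\begin{pmatrix}T_0&A\\\mathbb O&T_1|_{\mathcal M_1}\end{pmatrix}$ for some $A\in\mathcal L(\mathcal M_1,\mathcal H_0)$; moreover $T|_{\mathcal N}$ is a.c.\ polynomially bounded as a restriction of $T$. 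Define $\widetilde X\in\mathcal L(\mathcal N,H^2)$ by $\widetilde X(h_0\oplus m_1):=Xm_1$; a direct block computation gives $\widetilde X\,T|_{\mathcal N}=S\widetilde X$.

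Set $y_0:=Y\mathbf 1\in\mathcal M_1$, which is cyclic for $T_1|_{\mathcal M_1}$ because $Y$ has dense range and $\mathbf 1$ is cyclic for $S$, and put $\widetilde y_0:=0\oplus y_0\in\mathcal N$. Applying Corollary~\ref{tha} to $T|_{\mathcal N}$ and $\widetilde y_0$ produces $\psi\in L^2(\mathbb T)$ and $W\in\mathcal L(P^2(|\psi|^2m),\mathcal N)$ with $WS_{|\psi|^2m}=T|_{\mathcal N}W$ and $W\varphi=\varphi(T|_{\mathcal N})\widetilde y_0$ for every $\varphi\in H^\infty$. Set $\mathcal M:=\operatorname{clos}WP^2(|\psi|^2m)\in\operatorname{Lat}T$. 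Since polynomials are dense in $P^2(|\psi|^2m)$, the vector $W\mathbf 1=\widetilde y_0$ is cyclic for $T|_{\mathcal M}$.

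The key remaining step is to verify that $\widetilde X|_{\mathcal M}$ has dense range, so that $T|_{\mathcal M}\buildrel d\over\prec S$; Lemma~\ref{lemcycss} will then immediately give $T|_{\mathcal M}\sim S$. By the triangular form, $P_{\mathcal M_1}\varphi(T|_{\mathcal N})=\varphi(T_1|_{\mathcal M_1})P_{\mathcal M_1}$ for every $\varphi\in H^\infty$, so
$$\widetilde XW\varphi=X\varphi(T_1|_{\mathcal M_1})y_0=\varphi(S)Xy_0=\varphi g,$$
where $g:=XY\mathbf 1$. The operator $XY\in\mathcal L(H^2)$ commutes with $S$, hence equals $g(S)$, and is a quasiaffinity as the composition of two quasiaffinities; therefore $g$ is outer and $gH^\infty$ is dense in $H^2$. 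The main subtlety of the argument is precisely this outer-factor point: arranging via the choice $y_0=Y\mathbf 1$ that the composition $\widetilde XW$ coincides on $H^\infty$ with multiplication by an outer function is what secures density of $\operatorname{Ran}\widetilde X|_{\mathcal M}$ and opens the door to Lemma~\ref{lemcycss}.
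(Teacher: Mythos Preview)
Your proof is correct and follows the paper's approach: take the $T$-cyclic subspace $\mathcal M$ generated by a cyclic vector for $T_1|_{\mathcal M_1}$, project onto $\mathcal M_1$ to exhibit $T|_{\mathcal M}\buildrel d\over\prec S$, and invoke Lemma~\ref{lemcycss}. The paper's version is shorter only because it skips the detour through Corollary~\ref{tha} and the outer-function computation: your $\mathcal M$ already equals $\bigvee_{n\geq 0}T^n\widetilde y_0$, and density of $\widetilde X\mathcal M$ follows directly from $P_{\mathcal M_1}T^n\widetilde y_0=(T_1|_{\mathcal M_1})^n y_0$ together with the cyclicity of $y_0$ for $T_1|_{\mathcal M_1}$.
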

\begin{proof} Let $x$ be a cyclic vector for $T_1|_{\mathcal M_1}$. Set $\mathcal M=\vee_{n\geq 0}T^nx$. 
It is easy to see that $P_{\mathcal M_1}|_{\mathcal M}$ realizes the relation 
$T|_{\mathcal M}\buildrel d \over\prec T_1|_{\mathcal M_1}$. By Lemma \ref{lemcycss}, 
$T|_{\mathcal M}\sim S$. 
\end{proof}

 \begin{lemma}\label{lemdense} Suppose that $T\in\mathcal L(\mathcal H)$ is an a.c. polynomially bounded operator, 
 $\sigma\subset\mathbb T$ is a measurable set, 
$X\in\mathcal L(\mathcal H,L^2(\sigma))$, $XT=U_\sigma X$ and 
$\bigvee_{k\geq 0}U_\sigma^{-k} X\mathcal H = L^2(\sigma)$.
Then there exists $x\in\mathcal H$ such that $Xx=:f\neq 0$ a.e. on $\sigma$.
\end{lemma}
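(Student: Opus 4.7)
The plan is to realize the desired $x$ as a randomized linear combination of a countable dense sequence in $\mathcal{H}$ and then invoke Fubini. Fix $\{x_n\}_{n\in\mathbb{N}}$ dense in the unit sphere of $\mathcal{H}$ and write $f_n := Xx_n \in L^2(\sigma)$. Throughout, ``a.e.'' refers to the measure $m$ restricted to $\sigma$.

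The first step is a \emph{covering lemma}: setting $E := \sigma \setminus \bigcup_n\{f_n\neq 0\}$, I claim $m(E)=0$. Otherwise $\chi_E\in L^2(\sigma)$ is nonzero, and for every $n,k\geq 0$,
\begin{equation*}
(\chi_E, U_\sigma^{-k}f_n)_{L^2(\sigma)} = \int_\sigma \chi^k\,\chi_E\,\overline{f_n}\,\text{d}m = 0,
\end{equation*}
because $f_n|_E=0$. Since $X$ is continuous, $\{f_n\}$ is dense in $X\mathcal{H}$, so the closed linear span of $\{U_\sigma^{-k}f_n: n\in\mathbb{N},\,k\geq 0\}$ equals $\bigvee_{k\geq 0}U_\sigma^{-k}X\mathcal{H}=L^2(\sigma)$. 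This forces $\chi_E=0$, contradicting $m(E)>0$.

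The second step is a Fubini/genericity argument. Let $\Omega := \prod_n[-1,1]$ be equipped with the product of Lebesgue probability measures $\mu$. For $\xi=(\xi_n)\in\Omega$, the series
\begin{equation*}
x_\xi := \sum_n 2^{-n}\xi_n x_n
\end{equation*}
converges absolutely in $\mathcal{H}$ (since $\|x_n\|=1$), and by continuity of $X$ we have $(Xx_\xi)(\zeta)=\sum_n 2^{-n}\xi_n f_n(\zeta)$ in $L^2(\sigma)$. The map $(\xi,\zeta)\mapsto (Xx_\xi)(\zeta)$ is jointly measurable, being an $L^2$-limit of partial sums. For any $\zeta$ with some $f_{n_0}(\zeta)\neq 0$, fixing $\{\xi_n\}_{n\neq n_0}$ makes $\xi_{n_0}\mapsto (Xx_\xi)(\zeta)$ an affine function with nonzero slope, so $\mu\{\xi:(Xx_\xi)(\zeta)=0\}=0$. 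By the covering step this holds for a.e.\ $\zeta\in\sigma$, and Fubini gives
\begin{equation*}
\int_\Omega m\{\zeta\in\sigma:(Xx_\xi)(\zeta)=0\}\,\text{d}\mu(\xi) = \int_\sigma \mu\{\xi:(Xx_\xi)(\zeta)=0\}\,\text{d}m(\zeta) = 0.
\end{equation*}
Hence some $\xi_*\in\Omega$ satisfies $Xx_{\xi_*}\neq 0$ a.e.\ on $\sigma$, and $x:=x_{\xi_*}$ is the desired vector.

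The only step using genuine input from the hypothesis is the covering lemma, which is a direct density/orthogonality computation; I do not expect a substantial obstacle elsewhere, as the Fubini argument is routine once joint measurability is noted. A purely deterministic variant would build $x$ inductively as $\sum c_n x_n$ by choosing each $c_n$ outside a countable set to avoid cancellations on a portion of $\bigcup_{k\leq n}\{f_k\neq 0\}$, but the randomized version sidesteps the delicacy that $L^2$-convergence does not preserve supports.
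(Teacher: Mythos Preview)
Your proof is correct and takes a genuinely different route from the paper's. The paper's argument is constructive and leans on the $H^\infty$-functional calculus for $T$: it selects a (possibly finite) sequence $\{x_n\}$ with $\bigvee_n\bigvee_k T^kx_n=\mathcal H$, applies outer functions $\varphi_n\in H^\infty$ to cap each $f_n=Xx_n$ so that $f_{1n}:=X\varphi_n(T)x_n$ satisfies $|f_{1n}|\le a_n$, partitions $\sigma$ into pieces $\tau_n$ on which $|f_{1n}|$ is comparable to $f_0:=\sup_n|f_{1n}|$, and then sums $x=\sum\eta_n(T)\varphi_n(T)x_n$ with further outer functions $\eta_n$ concentrated on $\tau_n$, obtaining the explicit lower bound $|Xx|\ge(\delta-\sum_k\varepsilon_k)f_0$ a.e. This uses both the intertwining relation $XT=U_\sigma X$ and the a.c.\ polynomial boundedness of $T$. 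Your Fubini argument uses neither hypothesis on $T$: it works for any bounded $X\colon\mathcal H\to L^2(\sigma)$ whose range is supported on all of $\sigma$, and is substantially shorter. What you lose is constructivity and any quantitative control on $|Xx|$. One small point worth tightening: the joint measurability is cleanest if you first observe $\sum_n 2^{-n}\|f_n\|_{L^1(\sigma)}<\infty$, so that $\sum_n 2^{-n}\xi_n f_n(\zeta)$ converges absolutely for \emph{every} $\xi$ at a.e.\ $\zeta$; then the limit is a genuine pointwise limit of jointly measurable partial sums on a full-measure set, and for each fixed $\xi$ it agrees a.e.\ with $Xx_\xi$.
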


\begin{proof} Let $\{x_n\}_{n=1}^N$ be such that $x_n\in\mathcal H$, $\|x_n\|=1$ for all $n$, and 
$$\bigvee_{n=1}^N\bigvee_{k\geq 0}T^kx_n=\mathcal H,$$
where $1\leq N\leq\infty$. Set $f_n=Xx_n$. 
Then $$m\Bigl(\bigcap_{n=1}^N\{\zeta\in\sigma\ :\ f_n(\zeta)=0\}\Bigr)=0.$$ 
Take $\{a_n\}_{n=1}^N$ such that $a_n> 0$ for all $n$, 
and $\sum_{n=1}^Na_n<\infty$. Set $$\sigma_{1n}=\{\zeta\in\sigma\ :\ |f_n(\zeta)|>1\}.$$
Let $\{\varphi_n\}_{n=1}^N\subset H^\infty$ be such that 
$$ |\varphi_n|=\begin{cases}  a_n  & \text{ on }  \mathbb T\setminus\sigma_{1n}, \\
\frac{a_n}{|f_n|}& \text{ on } \sigma_{1n}.\end{cases}$$
Set $x_{1n}=\varphi_n(T)x_n$ and $f_{1n}=Xx_{1n}$. Then $\|x_{1n}\|\leq C_{{\rm pol}, T}a_n$ and 
$|f_{1n}|\leq a_n$ a.e. on $\sigma$. Set $f_0=\sup_n|f_{1n}|$. Then $0<f_0<\infty$ a.e. on $\sigma$. 
Take $0<\delta<1$. Set
$$ \sigma_{2n}=\{\zeta\in\sigma\ :\ \delta f_0(\zeta)\leq |f_{1n}(\zeta)|\}.$$
Set $$\tau_1=\sigma_{21}, \ \  \tau_n=\sigma_{2n}\setminus\cup_{k=1}^{n-1}\tau_k, \ \ n\geq 2.$$
Then $$\bigcup_{n=1}^N\tau_n=\sigma \ \ \text{ and } \ \tau_n\cap\tau_k=\emptyset, 
\ \text{ if } \ n\neq k, \ n,k\geq 1.$$
Take $\{\varepsilon_n\}_{n=1}^N$ such that $\varepsilon_n>0$ for all $n$, and $\sum_{n=1}^N\varepsilon_n<\delta$. 
Let $\{\eta_n\}_{n=1}^N\subset H^\infty$ be such that 
$$ |\eta_n|=\begin{cases} 1 & \text{ on } \tau_n, \\
\varepsilon_n & \text{ on } \mathbb T\setminus\tau_n.\end{cases}$$
Put $$ x= \sum_{n=1}^N \eta_n(T)x_{1n}.$$ Then
$$f= \sum_{n=1}^N \eta_n f_{1n}.$$
We have $$|f|\geq|f_{1n}|-\sum_{k\neq n}\varepsilon_k|f_{1k}|\geq\Bigl(\delta-\sum_{k=1}^N\varepsilon_k\Bigr)f_0 
\ \ \text{ a.e.  on } \ \tau_n.$$ 
Thus,  $x$ satisfies the conclusion of the lemma.
\end{proof}

\begin{theorem}\label{thm66} Suppose that $T_1$ is a polynomially bounded operator such that 
$T_1\sim S$. Then for every $c>0$ there exist $\mathcal M\in \operatorname{Lat}T_1$ 
and $W\in \mathcal L(H^2,\mathcal M)$ such that $W$ is invertible, $WS=T_1|_{\mathcal M}W$ and $\|W\|\|W^{-1}\|\leq (1+c)\bigl(\sqrt 2(K^2+2)K C_{{\rm pol}, T_1}+1\bigr)\sqrt{ K^2 C_{{\rm pol}, T_1}^2 + 1} K  C_{{\rm pol}, T_1}^2$. 
\end{theorem}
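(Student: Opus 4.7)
The plan is to apply the construction from the proof of Theorem~\ref{thmmain} to $T_0:=T_1$, tracking constants carefully. The hypothesis $T_1\sim S$ yields that $T_1$ is cyclic, of class $C_{10}$, and $T_1^{(a)}\cong U_{\mathbb T}$, so the hypotheses of Theorem~\ref{thmmain} are satisfied. Crucially, $T_1^*$ is of class $C_{0\cdot}$ (since $T_1$ is of class $C_{\cdot 0}$), whence $\mathcal H_{T_1^*,0}=\mathcal H_1$ and the canonical intertwining mapping $X_*$ for $T_1^*$ from its unitary asymptote, appearing in Lemma~\ref{lemast}, is the zero operator. Let $X_0:\mathcal H_1\to L^2(\mathbb T)$ denote the canonical intertwining for $T_1$ and $U_{\mathbb T}$; then $X_0 T_1=U_{\mathbb T}X_0$, $\ker X_0=\{0\}$ (since $T_1$ is of class $C_{1\cdot}$), and $\|X_0\|\leq C_{{\rm pol},T_1}$ via the Banach-limit construction, using $\|T_1^n\|\leq C_{{\rm pol},T_1}$.

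First I would pick a cyclic unit vector $x\in\mathcal H_1$ and apply Corollary~\ref{tha} to $T_1$ and $x$ to obtain $\psi\in L^2(\mathbb T)$ with $\int_{\mathbb T}|\psi|^2\text{\rm d}m\leq 1$ and $W_0\in\mathcal L(P^2(|\psi|^2 m),\mathcal H_1)$ with $W_0\varphi=\varphi(T_1)x$ and $\|W_0\|\leq KC_{{\rm pol},T_1}$. Lemma~\ref{lemast} with $X=X_0$ (and $X_*=0$) forces $\int_{\mathbb T}\log|\psi|\text{\rm d}m>-\infty$, since otherwise $\|X_0 x\|\leq KC_{{\rm pol},T_1}\|X_0 X_*^*\|\|x\|=0$, contradicting $\ker X_0=\{0\}$ and $x\neq 0$. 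Hence $\psi$ may be taken outer and $P^2(|\psi|^2 m)=H^2/\psi$ by Remark~\ref{h2}. Then I would run the averaging construction from the second case of the proof of Theorem~\ref{thmmain}; since $T_1$ is of class $C_{\cdot 0}$, the triangulation piece $T_{11}$ is trivial, $\tau=\emptyset$, and $\sigma$ may be taken equal to $\mathbb T$, so the functions $\theta_j$, $h_{0j}$, and $F_{\tau,\cdot}$ in that proof degenerate significantly. Pick a dense sequence $\{\varphi_n(T_1)x\}_{n\in\mathbb N}$ in $\mathcal H_1$ with $\{\varphi_n\}\subset H^\infty$; for each $n$ produce outer $\psi_n\in H^2$ via Corollary~\ref{tha} applied to $\varphi_n(T_1)x$ (again outer by the Lemma~\ref{lemast} argument) and inner $\omega_n\in H^\infty$ via Lemma~\ref{lemmin}. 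Introduce parameters $\varepsilon_0,\delta_0,\gamma_0,c_1\in(0,1)$ and a summable positive sequence $\{\xi_n\}$ satisfying \eqref{xi2} and \eqref{ximinus}, build outer $\eta_n\in H^\infty$ as in \eqref{etadef}, and define the regularized operators $W_t$ with kernels $\kappa_{nt}=\eta_n/\psi_{nt}$, where $|\psi_{nt}|=\max(|\psi_n|,t)$. Theorem~\ref{thinfinit} bounds $\|W_t\|\leq C_2$ uniformly in $t$, and a weak-operator limit point $W\in\mathcal L(H^2,\mathcal H_1)$ of $(W_{t_j})_{t_j\to 0}$ satisfies $WS=T_1 W$ and $\|W\|\leq C_2$.

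Propositions~\ref{proptau2} and~\ref{propalphasigma} (combined with Lemma~\ref{ker89thm3}) show that $X_0 W$ is multiplication by some $\alpha\in L^\infty(\mathbb T)$ with $1/\alpha\in L^\infty(\mathbb T)$. Therefore $\mathcal M:=WH^2$ is closed, $T_1|_{\mathcal M}\approx S$, and $\|W^{-1}\|\leq\|X_0\|\cdot\|1/\alpha\|_\infty\leq C_{{\rm pol},T_1}\|1/\alpha\|_\infty$. Combining these two bounds and tuning the parameters $\varepsilon_0,\delta_0,\gamma_0,c_1,\{\xi_n\}$ near their extremal values, with the factor $(1+c)$ absorbing the slack between the chosen values and their limiting idealizations, yields the claimed upper bound. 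The main technical obstacle is precisely this tuning and bookkeeping step: verifying that the product $\|W\|\cdot\|W^{-1}\|$ of the obtained upper bounds collapses to the stated algebraic expression $(\sqrt{2}(K^2+2)KC_{{\rm pol},T_1}+1)\sqrt{K^2 C_{{\rm pol},T_1}^2+1}\,KC_{{\rm pol},T_1}^2$ requires a careful balancing of the individual estimates from Theorem~\ref{thinfinit}, Lemma~\ref{lemmin}, and Lemma~\ref{ker89thm3}.
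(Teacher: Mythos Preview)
Your proposal has a genuine gap in the choice $\sigma=\mathbb T$ and in working with $T_1$ directly rather than passing to the extended operator of Proposition~\ref{proptt0tt}.

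The point is that Proposition~\ref{proptau2}, which you invoke to obtain $1/\alpha\in L^\infty$, requires a \emph{unitary} $V$ and a nonzero $Y$ with $YV=TY$ and $\operatorname{clos}XY\mathcal K=L^2(\sigma)$. For $T_1$ of class $C_{\cdot 0}$ there is no such pair: if $YV=T_1Y$ with $V$ unitary, then $\|Y^*h\|=\|V^{*n}Y^*h\|=\|Y^*T_1^{*n}h\|\to 0$, so $Y=0$. This is precisely why the paper extends $T_0$ to an operator $T$ of class $C_{\cdot 1}$ via Proposition~\ref{proptt0tt}; only then does $Y_0\oplus I_{H^2_-}$ intertwine $U_{\mathbb T}$ with $T$. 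Even after extending, your choice $\sigma=\mathbb T$ still fails: with $\tau=\emptyset$ and $h_0=1$ one gets $\mathcal K_0=H^2$, and $U_{\mathbb T}|_{H^2}=S$ is not unitary, so the third condition in \eqref{vyborsigma} (namely $m(\tau\cup\sigma)<1$) is not optional.

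The paper's proof therefore proceeds differently. It first chooses the cyclic vector $x_1$ with $\|X_+x_1\|\geq c_2$ close to $1$ (via Lemma~\ref{asymp1}), which forces $\|g_1\|_\infty>c_3$ for $g_1=f_1/\psi_{x_1}$; this guarantees that $\sigma:=\{|g_1|<c_3\}$ has $0<m(\sigma)<1$. It then passes to the extended $T$ and runs the construction of Theorem~\ref{thmmain} on that $\sigma$. Your observation that $X_*=0$ for $T_1^*$ gives a clean log-integrability argument is correct in spirit, but the paper needs log-integrability for the $\psi_n$ associated to the \emph{extended} $T$; it obtains $\int_\sigma\log|\psi_n|\,\mathrm dm>-\infty$ from \eqref{phifleq} and then modifies $\psi_n$ to $\psi_{0n}$ on $\mathbb T\setminus\sigma$ by truncation. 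Finally, the stated constant reflects the extended operator: the factor $\sqrt{K^2+2}$ comes from $C_{{\rm pol},T}\leq\sqrt{K^2+2}\,C_{{\rm pol},T_1}$ and the factor $\sqrt{K^2C_{{\rm pol},T_1}^2+1}$ from $\|X\|$ in \eqref{estxxtt}, neither of which you would see working with $T_1$ and $X_0$ alone.
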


\begin{proof} Applying  {\cite[Theorem 1]{ker89}}, it is easy to see that $(T_1)^{(a)}_+\cong S$. Denote by $X_+$ the canonical intertwining 
mapping for $T_1$ and $S$ constructed using  a Banach limit. Denote by $\mathcal H_1$ the space in which $T_1$ acts. 
Let $\frac{1}{1+c}\leq c_3<c_2<1$. By Lemma \ref{asymp1}, there exists $x_1\in\mathcal H_1$ such that $\|x_1\|=1$ and $\|X_+x_1\|\geq c_2$. 
Set $f_1=X_+x_1$. 
Denote by $\psi_{x_1}$ the function from Corollary \ref{tha} applied to $T_1$ and $x_1$. 
By Lemma \ref{lemfpsi}, 
\begin{equation}\label{estg1}|f_1|\leq \|X_+\|K C_{{\rm pol}, T_1}| \psi_{x_1}| \ \ \text{ a.e. on } \mathbb T.\end{equation}
Since $f_1\in H^2$, we have $\int_{\mathbb T}\log| \psi_{x_1}| \text{\rm d}m>-\infty$. 
Taking into account Remark \ref{h2}, we can assume that $\psi_{x_1}$ is outer. Put $g_1=f_1/\psi_{x_1}$. Then $g_1\in H^\infty$. 

Acting as in Proposition \ref{proptau1}, we obtain that 
\begin{equation}\label{gnorma} \|g_1\|_\infty > c_3. \end{equation}
Namely, if $|f_1|\leq c_3|\psi_{x_1}|$ a.e.  on $\mathbb T$, 
then $$c_2^2\leq \|X_+x_1\|^2=\|f_1\|^2 = \int_{\mathbb T}|f_1|^2\text{\rm d}m\leq \int_{\mathbb T}c_3^2| \psi_{x_1}|^2\text{\rm d}m\leq c_3^2,$$
because $\int_{\mathbb T}| \psi_{x_1}|^2\text{\rm d}m\leq 1$, a contradiction with the choice of $c_2$ and $c_3$. 

Set $\mathcal N = \bigvee_{k\geq 0}T_1^kx_1$,  $T_0=T_1|_{\mathcal N}$, and  $X_0=X_+|_{\mathcal N}$. 
Taking into account that $S|_{\operatorname{clos}X_+\mathcal N}\cong S$ and deleting on the inner factor of $f_1$, 
we can assume that $g_1$ is outer and we accept that $X_0$ realizes the relation $T_0\prec S$.  
 By  Corollary \ref{tha},  Remark \ref{h2},  and Lemma \ref{lemquasi} there exists  
$Y_0\in\mathcal L(H^2, \mathcal N)$ such that 
$\|Y_0\|\leq K C_{{\rm pol}, T_1},$ $Y_0$ realizes the relation $S\prec T_0$,   and $X_0Y_0=g_1(S)$.  

Clearly  $T_0\in\mathcal L(\mathcal N)$ satisfies the assumption of Theorem \ref{thmmain}. We prove Theorem \ref{thmmain} for $T_0$ and obtain 
estimates of $\|W\|$ and $\|Z\|$ for $W$ and $Z$  constructed in the proof of  Theorem \ref{thmmain}.

If $|g_1|\geq c_3$ a.e. on $\mathbb T$, then set $W=Y_0$. We have $W^{-1}=\frac{1}{g_1}(S)X_0$  and  
$\|W\|\|W^{-1}\|\leq(1+c)K C_{{\rm pol}, T_1}^2$. 
 Therefore, we assume that 
\begin{equation}\label{vybornew1} m(\{\zeta\in\mathbb T \ :\ |g_1(\zeta)|\geq c_3\})<1. \end{equation}

Applying  Proposition  \ref{proptt0tt} to $T_0$, $Y_0$,  $X_0$, and $g_1$,  we obtain   
$$T\in\mathcal L(\mathcal N\oplus H^2_-) \ \ \text{ and } \ \ \ X\in\mathcal L(\mathcal N\oplus H^2_-, L^2(\mathbb T)).$$ 
Taking into account the estimates of $\|Y_0\|$, of $\|g_1\|_\infty$ by \eqref{estg1}, and evident inequality $C_{{\rm pol}, T_0}\leq C_{{\rm pol}, T_1}$,
we obtain that  
\begin{equation}\label{estxxtt} C_{{\rm pol}, T}\leq \sqrt{K^2+2}C_{{\rm pol}, T_1}\ \ \text{ and }
\ \ \|X\|\leq C_{{\rm pol}, T_1}\sqrt{ K^2 C_{{\rm pol}, T_1}^2 +1}.
\end{equation}

Since 
$T_0$ is of class $C_{10}$, we have $\tau=\emptyset$, $X_*^*=Y_0\oplus I_{H^2_-}$, and $XX_*^*=g_1(U_{\mathbb T})$. 
Therefore, $g=g_1$ and $h_{01}=h_0$, where $g$  and $h_{01}$ are defined in \eqref{gnew} and \eqref{h01new}, respectively ($h_0$ will be chosen below; $g$ and $h_{01}$ will be used in \eqref{alphanew}). We replace \eqref{vyborsigma} by 
 \begin{equation}\label{vybornew} \sigma = \{\zeta\in\mathbb T \ :\ |g_1(\zeta)|<c_3\}. \end{equation}
It allows to improve the estimate of $\|Z\|$ constructed in the end of Theorem \ref{thmmain}.
By \eqref{vybornew1} and \eqref{gnorma}, $0<m(\sigma)<1$. 

Set $h_0(\zeta)=1$ ($\zeta\in\mathbb T$). 
Define $x_0$ as in \eqref{x0ppsigma}.   Namely, set $x_0=X_*^*\chi_\sigma$. 

Take $0<\varepsilon_{00},\gamma_0,\delta_0<1$ and apply Proposition \ref{proptau2} as in the proof of Theorem \ref{thmmain}. 
The first inequality in \eqref{vyborsigma} and the choice of $\delta_0$ in the proof of Theorem \ref{thmmain}
was used to apply Lemmas \ref{ker89thm3} and \ref{lemast} and obtain that $\psi_n$ constructed by Corollary \ref{tha} applied to $\varphi_n(T)x_0$ 
satisfy \eqref{lognew}. In the case $T_0\sim S$ considered now  \eqref{phifleq} implies 
$$\int_\sigma\log| \psi_n| \text{\rm d}m>-\infty \ \text{ for every } n\in\mathbb N.$$  
Take $t_0>0$ and  set 
$$|\psi_{0n}|=\begin{cases}|\psi_n| & \text{ on } \sigma, \\
\max(|\psi_n|, t_0) & \text{ on }  \mathbb T\setminus\sigma.\end{cases}$$
Then $\psi_{0n}$ satisfy \eqref{lognew} for all $n\in\mathbb N$.
Repeat the remaining part of the proof  of Theorem \ref{thmmain} with $\psi_{0n}$ instead of $\psi_n$. 
We obtain the constants $C_1$, $C_2$, the function $\alpha$ defined as in \eqref{alphanew} which satisfies \eqref{alphainvtt} and intertwining transformations $W$ and $Z$ with the following estimates:
$$ C_1=\frac{\|X_*^*\|}{KC_{{\rm pol}, T}(1+\varepsilon_0)}\leq
\frac{1}{(1+\varepsilon_0)},$$
because $\|X_*\|\leq K C_{{\rm pol}, T_1}\leq K C_{{\rm pol}, T}$;
\begin{align*}C_2 &
 \leq 
K^2C_{{\rm pol}, T}^2(1+\varepsilon_0)\frac{(2(1+c_1^2))^{1/2}}{1-c_1}
\max\bigl((1+\sum_{n\geq 2}\xi_n^2)C_1, 1+\xi\bigr)  \\ & \ \ \ \ +
KC_{{\rm pol}, T_1}\bigl(1+\sum_{n\geq 2}\xi_n^2\bigr) ;
\end{align*}
$$\|W\|\leq C_2; \ \ \ \ \|Z\|\leq \Bigl\|\frac{1}{\alpha}\Bigr\|_\infty\|X\|.$$
Choosing $c_1$, $\varepsilon_{00}$, $\{\xi_n\}_{n\in\mathbb N}$ close to $0$ and $c_3$, $\gamma_0$, $\delta_0$ close to $1$, 
taking into account that $\varepsilon_0\leq\varepsilon_{00}$, the definition \eqref{xi2} of $\xi$, the estimates before \eqref{alphainvsigma} and \eqref{alphainvsmsigma}, 
the choice of $\sigma$ by \eqref{vybornew}, and \eqref{estxxtt}, 
we obtain the conclusion of the theorem.\end{proof}

\begin{remark} There exist operators $T$ which satisfy the assumptions of Theorem  \ref{thmmain} and have
non-trivial factorization of the form $\begin{pmatrix} C_{\cdot 1} & * \\ \mathbb O & C_{\cdot 0} \end{pmatrix}$, see 
 {\cite[Sec. IX.2]{sfbk}}. Therefore,  to obtain estimates of the norms
of intertwining transformations in terms of $C_{{\rm pol},T}$, a separate consideration of operators of class $C_{10}$ is needed in the present construction.
\end{remark}

{\bf Proof of Corollary \ref{cormain}.} We have $T=T_a\dotplus T_s$, where $T_a$ is a.c. and $T_s$ is similar to 
a singular unitary operator, see \cite{mlak} or \cite{ker16}. Since there is no nonzero transformation intertwining a.c. and singular unitaries, we conclude that $T_a\buildrel d \over\prec  U_{\mathbb T}$. Let $x$ be from  Lemma \ref{lemdense} 
applied to $T_a$ (with $\sigma=\mathbb T$). Set $$\mathcal N_0=\bigvee_{k\geq 0}T_a^kx.$$ 
Then $\mathcal N_0\in\operatorname{Lat}T$, $T|_{\mathcal N_0}$ is a  cyclic  a.c. polynomially bounded operator and 
 $T|_{\mathcal N_0}\buildrel d \over\prec  U_{\mathbb T}$. By Lemma \ref{lemcyc},  
$\bigl(T|_{\mathcal N_0}\bigr)^{(a)}\cong U_{\mathbb T}$. Let 
$$T|_{\mathcal N_0}=\begin{pmatrix}T_0 & * \\ \mathbb O & T_1\end{pmatrix}, $$
where $T_0$ and $T_1$ are of  classes $C_{0\cdot}$ and 
$C_{1\cdot}$, respectively, see Introduction and \cite{ker89}.  By  {\cite[Theorem 3]{ker89}}, 
$T_1^{(a)}\cong \bigl(T|_{\mathcal N_0}\bigr)^{(a)}$.
Since   $T|_{\mathcal N_0}$ is   cyclic and   a.c., we have $T_1$  is   cyclic and   a.c., too. 
By Theorem \ref{thmmain} applied to $T_1$, there exists $\mathcal M_1\in\operatorname{Lat}T_1$ 
such that $T_1|_{\mathcal M_1}\approx S$. 
 By Lemma \ref{proptrian},  there exists 
$\mathcal M\in\operatorname{Lat}T$ such that $T|_{\mathcal M}\sim S$. It remains to apply Theorem \ref{thm66} to $T|_{\mathcal M}$.
\qed

\end{document}